\theoremstyle{plain}
\newtheorem{theorem}{Theorem}[section]
\newtheorem{corollary}[theorem]{Corollary}
\newtheorem{lemma}[theorem]{Lemma}
\newtheorem{proposition}[theorem]{Proposition}
\newtheorem{example}[theorem]{Example}
\newtheorem*{coro}{Corollary} 
\newtheorem{definition}[theorem]{Definition}
\newtheorem{remark}[theorem]{Remark}
\newcommand{\Hom}{\operatorname{Hom}}
\newcommand{\Ker}{\operatorname{Ker}}
\newcommand{\cL}{\mathcal{L}}
\newcommand{\C}{{\mathbb{C}}}
\newcommand{\F}{{\mathbb{F}}}     
\renewcommand{\H}{{\mathbb{H}}}
\newcommand{\N}{{\mathbb{N}}}     
\newcommand{\Q}{{\mathbb{Q}}}     
\newcommand{\R}{{\mathbb{R}}}
\newcommand{\Z}{{\mathbb{Z}}}     
\newcommand{\kk}{{\mathbf{k}}}     
\newcommand{\p}{{\partial}}     
\newcommand{\one}     
{{{\mathchoice \mathrm{ 1\mskip-4mu l} \mathrm{ 1\mskip-4mu l}     
\mathrm{ 1\mskip-4.5mu l} \mathrm{ 1\mskip-5mu l}}}}     
\newcommand{\Id}{\operatorname{Id}}     
\newcommand{\eps}{{\varepsilon}}     
\newcommand{\lcan}{\lambda_{\mathrm{can}}}
\newcommand{\ocan}{\omega_{\mathrm{can}}}
\newcommand{\Hlin}{H^{\mbox{\tiny{lin}}}} 
\newcommand{\Hcont}{H^{\mbox{\tiny{cont}}}}
\newcommand{\HrSFT}{H^{\mbox{\tiny{rSFT}}}}
\newcommand{\HSFT}{H^{\mbox{\tiny{SFT}}}}
\begin{document}

\title{Morse theory, closed geodesics, and the homology of free loop spaces}

\author{Alexandru Oancea \\ \ \\ \large{Appendix by Umberto Hryniewicz}}
\address{A.O.: Sorbonne Universit\'es, UPMC Univ Paris 06,\\
UMR 7586, Institut de Math\'ematiques de Jussieu-Paris Rive Gauche, \\
Case 247, 4 place Jussieu, F-75005, Paris, France.\\
Email: \tt{alexandru.oancea@imj-prg.fr}\\
\qquad \\
U.H.: Universidade Federal do Rio de Janeiro,\\
Instituto de Matem\'atica, Cidade Universit\'aria,\\
CEP 21941-909, Rio de Janeiro, Brazil.\\
Email: \tt{umberto@labma.ufrj.br}
}

\maketitle

\begin{abstract} We give a survey of the existence problem for closed geodesics. The free loop space plays a central role, since closed geodesics are critical points of the energy functional. As such, they can be analyzed through variational methods, and in particular Morse theory. The topics that we discuss include: Riemannian background, the Lyusternik-Fet theorem, the Lyusternik-Schnirelmann principle of subordinated classes, the Gromoll-Meyer theorem, Bott's iteration of the index formulas, homological computations using Morse theory, $SO(2)$- vs. $O(2)$-symmetries, Katok's examples and Finsler metrics, relations to symplectic geometry, and a guide to the literature. 

The Appendix by Umberto Hryniewicz gives an account of the proof of the existence of infinitely many closed geodesics on the $2$-sphere.
\end{abstract}

\begin{classification} 58E05, 55P35, 53C22, 37B30, 53D12, 58B20, 37J05, 53C35.
\end{classification}

\begin{keywords} Morse theory, closed geodesics, free loop space, variational methods, index, Hamiltonian dynamics. 
\end{keywords}

\section{Introduction}

The study of geodesics on Riemannian manifolds was historically one of the driving forces in the development of the calculus of variations. The goal of the present paper is to present an overview of results related to the following two questions. 

\smallskip
\noindent {\bf Questions.} {\it Does every closed Riemannian manifold $M$ carry a closed geodesic? If yes, how many of them?} 
\smallskip

\noindent Here \emph{closed manifold} means a manifold that is compact and has no boundary. 

The first question admits a relatively easy answer if the manifold is not simply connected: 
free homotopy classes of loops on $M$ are in one-to-one bijective correspondence with conjugacy classes in $\pi_1(M)$. One can minimize length (or, equivalently, energy) within such a nontrivial free homotopy class, and one of the first successes of the calculus of variations was to establish rigorously that such a minimizing procedure is effective and produces a closed geodesic. The situation is subtler if the manifold is simply connected, and the question was answered in the affirmative by Lyusternik and Fet in their celebrated 1951 paper~\cite{Lyusternik-Fet}. We explain their theorem in~\S\ref{sec:Lyusternik-Fet}.

In order to phrase the second question in a more satisfactory way, let us call a non-constant closed geodesic \emph{prime} if it is not the iterate of some other closed geodesic, and call two geodesics \emph{geometrically distinct} if they differ as subsets of $M$. The actual expectation (which we phrase as a question) is the following. 

\smallskip
\noindent {\bf Question.} {\it Does every closed Riemannian manifold $M$ carry infinitely many geometrically distinct prime non-constant closed geodesics?}
\smallskip

The answer is not known in full generality. We refer to~\S\ref{sec:comments} for a description of the current state of the art. In the spirit of the discussion of the first question, one can cook up a class of non-simply connected manifolds for which the answer is easy, namely manifolds whose fundamental group has infinitely many conjugacy classes, which are not iterates of a finite set of conjugacy classes. For such manifolds the minimizing procedure described above yields infinitely many closed geodesics, one for each free homotopy class, among which there are necessarily infinitely many geometrically distinct ones. At the other end of the hierarchy, if the finite group is finite the question reduces to the simply connected case. The core of the matter is thus again the simply connected case, and the   breakthrough in this direction was achieved by Gromoll and Meyer~\cite{Gromoll-Meyer-JDG1969}. We explain their theorem in~\S\ref{sec:GM}.

Note that, in general, a prime closed geodesic is not \emph{simple}, i.e. it does have self-intersections~: one can explicitly determine/bound the number of simple closed geodesics in some particular cases -- a $2$-dimensional ellipsoid with unequal axes of approximately equal length has exactly three simple closed geodesics~\cite{Morse}. We refer to~\cite{Hingston1990} for a beautiful account of the problem of the existence of closed geodesics on $S^2$, see also the discussion in Appendix~\ref{app:S2} written by Umberto Hryniewicz; we now know that there are always infinitely many geometrically distinct prime geodesics on $S^2$ thanks to work of Bangert, Franks, Hingston and Angenent~\cite{Bangert1993,Franks1992,Hingston1993,Angenent}.

Closed geodesics $\gamma:S^1=\R/\Z\to M$ are critical points of the energy functional 
$$
E(\gamma)=\int_{S^1}|\dot\gamma(t)|^2\, dt
$$
defined on the space $\Lambda M$ of free loops $\gamma:S^1\to M$ (one convenient setup is to use loops of Sobolev class $H^1$). The study of this smooth functional was the main motivation behind the invention by Marston Morse of ``Morse theory''~\cite{Morse, Milnor-Morse_theory}. This establishes a close relationship between the critical points of $E$ and the topology of the Hilbert manifold $\Lambda M$. 

The paper is organized as follows. In~\S\ref{sec:energy} we recall basic facts of Riemannian geometry and in~\S\ref{sec:Morse} we provide an account of Morse theory, with emphasis on the energy functional. In~\S\ref{sec:Lyusternik-Fet} we give the proof of the famous Lyusternik-Fet theorem, and explain the principle of subordinated classes of Lyusternik-Schnirelmann, which allows to detect distinct critical levels. In~\S\ref{sec:GM} we give an outline of the proof of the celebrated Gromoll-Meyer theorem, and explain Bott's iteration formulae for the index of closed geodesics. In~\S\ref{sec:equivariant} we give an overview of results due to Klingenberg, Takens, Hingston and Rademacher related to the problem of the existence of infinitely many closed geodesics and which go beyond the Gromoll-Meyer theorem. We also motivate the use of equivariant homology in the study of the closed geodesics problem. In~\S\ref{sec:completing} we explain how to compute the homology of the space of free loops on spheres and projective spaces using Morse theory. In~\S\ref{sec:symplectic} we discuss the relationship between the existence problem for  closed geodesics and Hamiltonian dynamics, with an emphasis on some remarkable examples of Finsler metrics due to Katok. 
The paper ends with a quick guide to the literature and with an Appendix by Umberto Hryniewicz on geodesics on the $2$-sphere. 

We would like to draw from the start the reader's attention to the classical survey paper by Bott~\cite{Bott-oldnew}.

\noindent {\bf Notation.} 

$\cL M$ denotes the space of smooth free loops on a manifold $M$

$\Lambda M$  denotes the space of free loops of Sobolev class $H^1=W^{1,2}$ on $M$

$\mathbb{F}_p$ denotes the field with $p$ elements, for $p\ge 2$ prime

$b_k(\Lambda M;\mathbb{F}_p):=\mathrm{rk}\, H_k(\Lambda M; \mathbb{F}_p)$ is the $k$-th Betti number of $\Lambda M$, considered with $\mathbb{F}_p$-coefficients; 
this is the same as the $k$-th Betti number of $\cL M$ with $\mathbb{F}_p$-coefficients.

\noindent {\bf Acknowledgements.} The author is particularly grateful for help, suggestions, and/or inspiration to Nancy Hingston, Umberto Hryniewicz, Janko Latschev, and to the anonymous referee. Particular thanks go to Umberto Hryniewicz for having contributed the Appendix. The author is partially supported by the ERC Starting Grant 259118-STEIN.

\section{The energy functional} \label{sec:energy}

\noindent {1. \it Generalities on Riemannian manifolds~\cite{Chavel-Riemannian_Geometry,GHL}.} Given a manifold $M$, denote $\mathcal{X}(M):=\Gamma(TM)$ the space of smooth vector fields on $M$. Given a Riemannian metric $g=\langle\cdot,\cdot\rangle$ on $M$, the \emph{Levi-Civita connection}\,Ê$D:\mathcal{X}(M)\times\mathcal{X}(M)\to\mathcal{X}(M)$ is the unique torsion-free connection compatible with the metric. This means that $D$ satisfies the equations
\begin{eqnarray*}
D_X Y -D_Y X = [X,Y], & & (zero\ torsion) \\
X\big(g(Y,Z)\big)=g(D_XY,Z)+g(Y,D_XZ),& & (compatibility\ with\ the\ metric)
\end{eqnarray*}
for all $X,Y,Z\in\mathcal{X}(M)$. The value of $D_XY$ at a point $p\in M$ depends only on $X_p$ and on the values of $Y$ along some curve tangent to $X_p$. In particular, given a smooth curve $\gamma:I\subset \mathbb{R}\to M$ the expression $D_{\dot \gamma}\dot\gamma$, also written $D_t\dot \gamma$, defines a vector field along $\gamma$. We say that $\gamma$ is a \emph{geodesic} if 
\begin{equation}\label{eq:geodesics}
D_t\dot\gamma=0.
\end{equation}
The identity $\frac d {dt}\frac 1 2|\dot\gamma|^2=\langle D_t\dot\gamma,\dot\gamma\rangle$ shows that a geodesic has constant speed $|\dot\gamma|$. Two geodesics have the same image if and only if their parametrizations differ by an affine transformation of $\mathbb{R}$. 

Equation~\eqref{eq:geodesics} is a second order non-linear ordinary differential equation (ODE) with smooth coefficients. As a consequence of general existence and uniqueness theory, together with smooth dependence on the initial conditions, one can define the \emph{exponential map at $p\in M$},
$$
\exp_p:\mathcal{V}(0)\subseteq T_p M\to M.
$$
Here $\mathcal{V}(0)$ denotes a sufficiently small open neighborhood of $0\in T_pM$. By definition, the curve $t\mapsto \exp_p(tX)$ is the unique geodesic passing through $p$ at time $t=0$ with speed $X$. The exponential map is a local diffeomorphism since $d\exp_p(0)=\textrm{Id}$, and this implies that $p$ is connected to any nearby point by a unique ``short" geodesic. As a matter of fact, a much stronger statement is true: any point $p$ has a basis of geodesically convex neighborhoods, i.e. open sets $\mathcal{U}$ such that any two points in $\mathcal{U}$ are connected by a unique geodesic contained in $\mathcal{U}$. 

\smallskip
\noindent {\bf Examples.} (i)~Denote by $E^{n}$ the Euclidean $n$-dimensional space. Geodesics in $E^{n}$ are straight lines parametrized as affine embeddings of $\R$. 

(ii)~A curve $\gamma$ lying on a submanifold $M\subset E^n$ is geodesic for the induced Riemannian metric if and only if the acceleration vector field $\ddot\gamma$ is orthogonal to $M$. 

(iii)~On the sphere $S^n=\{x\in E^{n+1}\, :\, \|x\|=1\}$ endowed with the induced metric, all the geodesics close up: their images are the great circles on $S^n$, i.e. the circles obtained by intersecting $S^n$ with $2$-dimensional vector subspaces of $\R^{n+1}$. 

(iv)~On the complex projective space $\C P^n=S^{2n+1}/S^1$, which we view as the quotient of the unit sphere $S^{2n+1}\subset \C^{n+1}$ by the diagonal action of $S^1=U(1)$ and which we endow with the quotient (Fubini-Study) metric, the complex lines $\C P^1\subset \C P^n$ are totally geodesic submanifolds, isometric to the $2$-sphere of radius $1/2$ in $\R^3$. The image of a geodesic starting at a point $p$ in the direction $v$ is a great circle on the unique complex line through $p$ tangent to $v$, and in particular all geodesics on $\C P^n$ close up.
\smallskip

\noindent We use for the \emph{Riemannian curvature tensor} $R:T_pM\times T_p M\to \mathrm{End}(T_pM)$ the sign convention 
$$
R(X,Y)=-[D_X,D_Y]+D_{[X,Y]},
$$
i.e. $R(X,Y)Z=D_YD_XZ-D_XD_YZ+D_{[X,Y]}Z$ for all $X,Y,Z\in T_p M$. The Riemannian curvature tensor is the fundamental invariant of a Riemannian metric. Let us only mention here that it takes values in the space of anti-symmetric endomorphisms of $T_pM$, i.e. in the Lie algebra of orthogonal transformations of $T_pM$.

\smallskip

\noindent {2. \it Spaces of paths and energy functional.} Let $p,q\in M$ be two distinct points. We consider the space 
$$
\mathcal{P}(p,q)=\{\gamma\in C^\infty([0,1],M)\, : \, \gamma(0)=p,\ \gamma(1)=q\}
$$
of smooth paths (strings) defined on $[0,1]$ and running from $p$ to $q$. 
The space $\mathcal{P}(p,q)$ is a Fr\'echet manifold and the tangent space at a path $\gamma$ is 
$$
T_\gamma\mathcal{P}(p,q)=\{\xi\in\Gamma(\gamma^*TM)\, : \, \xi(0)=0,\ \xi(1)=0\}.
$$
More precisely, an element $\xi\in T_\gamma\mathcal{P}(p,q)$ determines a curve 
\begin{equation}\label{eq:cxi}
c_\xi:\mathcal{V}(0)\subset \mathbb{R}\to\mathcal{P}(p,q), \qquad c_\xi(s)(t):=\exp_{\gamma(t)}(s\xi(t)),
\end{equation}
which satisfies $c_\xi(0)=\gamma$ and $\frac d {ds}\big|_{s=0} c_\xi(s)(t)=\xi(t)$, $t\in[0,1]$.   
  
The \emph{energy functional} 
$$
E:\mathcal{P}(p,q)\to\mathbb{R}
$$
is defined by 
\begin{equation} \label{eq:E} 
E(\gamma):=\int_0^1|\dot \gamma|^2.
\end{equation}
The differential of $E$ at $\gamma$ is the linear map $dE(\gamma):T_\gamma\mathcal{P}(p,q)\to\R$ given by 
\begin{equation}\label{eq:dE}
dE(\gamma)\xi=2\int_0^1\langle D_t\xi,\dot\gamma\rangle=-2\int_0^1\langle\xi,D_t\dot\gamma\rangle.
\end{equation}
This is called \emph{the first variation formula}. It shows that 
\begin{center}
\emph{$\gamma$ is a critical point of $E$ iff $D_t\dot\gamma=0$, i.e. $\gamma$ is a geodesic from $p$ to $q$}. 
\end{center}
To prove the first equality in~\eqref{eq:dE}, recall the definition of $c_\xi(s)(t)=:c_\xi(s,t)$ above and compute
\begin{eqnarray*}
dE(\gamma)\xi=\frac d {ds}\Big|_{s=0} E(c_\xi(s)) & = & 2\int_0^1\langle D_s \partial_t c_\xi,\partial_t c_\xi\rangle \\Ê
& = & 2\int_0^1\langle D_t \partial_s c_\xi,\partial_t c_\xi\rangle = 2\int_0^1\langle D_t\xi,\dot\gamma\rangle,
\end{eqnarray*}
using that $D_s\partial_t=D_t\partial_s$. The second equality in~\eqref{eq:dE} follows by integrating by parts and using the vanishing condition at the endpoints for $\xi$. This integration by parts is exactly the procedure used to derive the general \emph{Euler-Lagrange equations}, which in our case read $D_t\dot\gamma=0$. 
 
In order to avoid the subtleties of analysis on Fr\'echet manifolds we switch to a Banach -- and actually Hilbert -- setup and consider as a domain of definition for the energy functional the space 
$$
\Omega(p,q) = \{\gamma\in H^1([0,1],M) \, : \, \gamma(0)=p,\ \gamma(1)=q\}
$$
of paths of Sobolev class $H^1=W^{1,2}$ defined on $[0,1]$ and running from $p$ to $q$. Note that such paths are necessarily continuous and therefore the condition at the endpoints makes sense. The space $\Omega(p,q)$ is a smooth Hilbert manifold and the tangent space at $\gamma$ is 
$$
T_\gamma\Omega(p,q)=\{\xi\in H^1(\gamma^*TM)\, : \, \xi(0)=0,\ \xi(1)=0\},
$$
the space of vector fields along $\gamma$ which are of Sobolev class $H^1$ and vanish at the endpoints. 
The energy functional $E:\Omega(p,q)\to\R$ is of class $C^2$, and the formulas expressing $dE(\gamma)$ remain the same. Critical points are now $H^1$-solutions of $D_t\dot\gamma=0$ and, this being an elliptic equation, they are necessarily smooth and are therefore geodesics.

\begin{remark}[On two other functionals] The energy functional will be our main tool for studying geodesics. There are two other important functionals of geometric origin whose critical points are related to geodesics. 

The first one is the \emph{length functional}
$$
L:\mathcal{P}(p,q)\to \R, \qquad L(\gamma):=\int_0^1|\dot \gamma|.
$$
The change of variables formula shows that $L$ is invariant under the action of the (infinite-dimensional) group of diffeomorphisms of the interval $[0,1]$. As a consequence, critical points of $L$ come in infinite-dimensional families. This degeneracy can be removed using the observation that any path $\gamma\in\mathcal{P}(p,q)$ admits a unique positive reparametrization on $[0,1]$ with constant speed. The reader can then prove that $\gamma\in\mathcal{P}(p,q)$ is a geodesic if and only if it has constant speed and is a critical point of $L(\gamma)$. Note that $L$ is only differentiable at paths $\gamma$ such that $\dot\gamma\neq 0$. As such, it is not well adapted to the study of geodesics from a variational point of view. Note that the Cauchy-Schwarz inequality implies 
$$
L(\gamma)\leq \sqrt{E(\gamma)},
$$
with equality if and only if $\gamma$ is parametrized proportional to arc-length (PPAL). 

The second functional is the \emph{norm functional}
$$
F:\mathcal{P}(p,q)\to\R,\qquad F(\gamma):=\sqrt{E(\gamma)}.
$$
This functional is perhaps best adapted for the variational study of geodesics: if $p\neq q$ it is everywhere differentiable and has the same differentiability class as the energy. Moreover, it is additive under energy-minimizing concatenation of paths and its minimax values behave well with respect to products of Chas-Sullivan type. We shall not use these features here and refer to~\cite{Goresky-Hingston,Hingston-Oancea} for applications that use these features in an essential way. The norm functional was first used in the study of geodesics by Goresky and Hingston in~\cite{Goresky-Hingston}. 
\end{remark}

\noindent {3. \it Spaces of loops.} The above discussion has a \emph{periodic} counterpart. Let us use $S^1=\mathbb{R}/\mathbb{Z}$ as a model for the circle and consider
$$
\mathcal{L} M := C^\infty(S^1,M),
$$
the space of smooth \emph{loops} in $M$. This is a Fr\'echet manifold on which $O(2)=SO(2)\rtimes\{\pm1\}$
acts naturally by $(e^{i\theta}\cdot\gamma)(t):=\gamma(t+\theta)$ and $(-1\cdot\gamma)(t):=\gamma(-t)$.
Again, it is more convenient to work with the smooth Banach manifold 
$$
\Lambda M :=H^1(S^1,M)
$$
of loops of Sobolev class $H^1$. We shall refer to $\Lambda M$ as the \emph{free loop space} of $M$. Obviously $\mathcal{L}M\subset \Lambda M$, and the $O(2)$-action extends naturally to $\Lambda M$. 
The inclusion $\mathcal LM\hookrightarrow \Lambda M$ is a homotopy equivalence~\cite[Theorem~13.14]{Palais-Foundations}. 

Equation~\eqref{eq:E} defines a smooth functional $E:\Lambda M\to\mathbb{R}$, whose critical points are smooth periodic (or closed) geodesics. In the present situation $E$ is $O(2)$-invariant and this forces some degeneracy for the critical points. A first rough (and binary) classification of closed geodesics is the following: on the one hand we have the constant ones, corresponding to points in $M$, and on the other hand we have the non-constant ones. The isotropy group at a constant geodesic is $O(2)$, whereas the isotropy group at a non-constant one is $\mathbb{Z}/k\mathbb{Z}$, where $1/k$, $k\in\Z^+$ is its minimal period. Non-constant geodesics come in pairs resulting from reversing the time.

The tangent space at $\gamma\in\Lambda M$ is $T_\gamma\Lambda M=H^1(\gamma^*TM)$, the space of sections of $\gamma^*TM$ of Sobolev class $H^1$. \emph{In the sequel we  view $\Lambda M$ as a Hilbert manifold with respect to the $H^1$-scalar product}
$$
\langle \xi,\eta\rangle_1 := \int_{S^1} \langle \xi,\eta\rangle + \int_{S^1} \langle D_t\xi,D_t\eta\rangle = \langle\xi,\eta\rangle_0 + \langle D_t\xi,D_t\eta\rangle_0.
$$

As a consequence of the Arzel\`a-Ascoli theorem we have a compact inclusion $\Lambda M=H^1(S^1,M)\hookrightarrow C^0(S^1,M)$. This in turn can be used to prove the following.

\begin{proposition}[{\cite[1.4.5]{Klingenberg-book}}]
The Riemannian metric on $\Lambda M$ given by the $H^1$-scalar product is complete. 
\end{proposition}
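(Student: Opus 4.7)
Let $(\gamma_n) \subset \Lambda M$ be a Cauchy sequence for the Riemannian distance $d_1$ associated to $\langle \cdot, \cdot \rangle_1$. My plan is to first extract a $C^0$-limit via the Sobolev embedding, then upgrade it to an $H^1$-limit by linearizing $\Lambda M$ near the limit through the exponential map and invoking the completeness of $H^1$ as a Hilbert space.

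The compact Sobolev inclusion $\Lambda M \hookrightarrow C^0(S^1, M)$ recalled just above the proposition is, in particular, continuous. A short argument using the local estimate $\sup_{S^1} |\xi|^2 \leq C\bigl(\|\xi\|_0^2 + \|D_t \xi\|_0^2\bigr)$ for sections along curves shows that it is Lipschitz along paths in $\Lambda M$, so that $d_1(\gamma_n, \gamma_m) \geq c \cdot d_{C^0}(\gamma_n, \gamma_m)$ for some $c > 0$. Thus $(\gamma_n)$ converges uniformly to a continuous loop $\gamma_\infty : S^1 \to M$, whose image is contained, together with the images of all but finitely many $\gamma_n$, in a compact set $K \subset M$.

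Choose $N$ so large that, for all $n \geq N$, $\gamma_n$ lies inside the radius-$r$ tubular neighborhood of $\gamma_N$, where $r > 0$ is a uniform injectivity radius for $\exp$ over $K$. Write $\gamma_n(t) = \exp_{\gamma_N(t)}(\xi_n(t))$ with $\xi_n \in H^1(\gamma_N^* TM)$. The map $\xi \mapsto \exp_{\gamma_N}(\xi)$ provides a bi-Lipschitz homeomorphism from a neighborhood of $0$ in the flat Hilbert space $H^1(\gamma_N^* TM)$ onto a neighborhood of $\gamma_N$ in $(\Lambda M, d_1)$, with constants depending only on uniform bounds for the Levi-Civita connection over $K$. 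This transfers the Cauchy condition on $(\gamma_n)$ to a Cauchy condition for $(\xi_n)$ in the Hilbert space $H^1(\gamma_N^* TM)$; by completeness of this space, $\xi_n \to \xi_\infty$ in $H^1$, and then $\gamma_n \to \exp_{\gamma_N}(\xi_\infty) \in \Lambda M$ in $d_1$, with the limit agreeing with the continuous $\gamma_\infty$ already identified.

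The main technical obstacle is the bi-Lipschitz comparison between the intrinsic Riemannian $d_1$-distance on $\Lambda M$ near $\gamma_N$ and the flat linear $H^1$-norm on $H^1(\gamma_N^* TM)$ pulled back along $\exp_{\gamma_N}$. This requires uniform control of the differential of the exponential map and of parallel transport over the compact region $K \subset M$, and is exactly the place where the localization of the problem to a compact subset of $M$ --- itself a consequence of the $C^0$-convergence in the first step --- is used in an essential way.
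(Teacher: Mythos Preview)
The paper does not actually prove this proposition; it only records the hint that the compact inclusion $\Lambda M = H^1(S^1,M) \hookrightarrow C^0(S^1,M)$ can be used, and defers to Klingenberg's book for the details. Your plan --- first extract a $C^0$-limit via this embedding, then linearize through the exponential chart around $\gamma_N$ and invoke completeness of the Hilbert space $H^1(\gamma_N^*TM)$ --- is exactly the standard argument and matches the paper's hint.
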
 

\begin{remark}[On the choice of completion] \label{rmk:Floer} The choice of completion for the manifold $\mathcal{L} M$ is crucial for applications. The $H^1$-completion for the space of free loops ensures that the energy functional has a well-defined gradient flow on $\Lambda M$, which moreover satisfies the ``Palais-Smale condition", an infinite-dimensional analogue of properness (see~\S\ref{sec:Morse} below).

A contrasting example coming from symplectic geometry is the following. Consider the standard phase space $(\R^{2n},\omega=dp\wedge dq)$ endowed with the standard complex structure $J$ and the Euclidean metric $\omega(\cdot, J \cdot)$. Given a periodic Hamiltonian $H:S^1\times M\to \R$, the Hamiltonian action functional $A_H(\gamma):=\int_\gamma pdq - Hdt$ defined on the space of free loops is not well-adapted to variational methods and Morse theory since the index and the coindex of a critical point are both infinite. We recall that the critical points of this functional are the periodic orbits of the Hamiltonian system $\dot q= \p H/\p p$, $\dot p = - \p H/\p q$. One of Floer's insights~\cite{Floer} was to consider on $\mathcal L M$ the $L^2$-scalar product. Although the  equation of gradient lines for $A_H$ with respect to this metric is not integrable, i.e. the $L^2$-gradient flow does not exist in the ODE sense, the same equation interpreted as an equation on the cylinder $\R\times S^1$ turns out to be a $0$-order perturbation of the Cauchy-Riemann equation, i.e. an elliptic PDE (partial differential equation). The reader is referred to~\cite{Audin-Damian} 
for an account of Floer's theory. 
\end{remark}

\section{Morse theory} \label{sec:Morse}

In this section we explain the rudiments of Morse theory, focusing on the space of loops $\Lambda M$. The discussion can be adapted in a straightforward way to the space of paths $\Omega(p,q)$ (which is the setup of Milnor's classical book on the subject~\cite{Milnor-Morse_theory}). 

The gradient $\nabla E$ is the vector field on $\Lambda M$ defined by 
$$
\langle \nabla E(\gamma),\xi\rangle_1=dE(\gamma)\xi = \langle \dot\gamma, D_t\xi\rangle_0, \qquad \xi\in H^1(\gamma^*TM).
$$
If $\gamma$ is smooth we have $\langle \dot\gamma,D_t\xi\rangle_0=-\langle D_t\dot\gamma, \xi\rangle_0$, so that 
$\nabla E(\gamma)\in\Gamma(\gamma^*TM)$ is the unique (periodic) solution of 
$$
D_t^2\eta(t)-\eta(t)=D_t\dot\gamma(t).
$$

The following property of the energy functional, due to Palais and Smale, is the crucial ingredient that makes Morse theory work in an infinite dimensional setting~\cite{Palais}. 

\begin{theorem}[\cite{Palais}, \cite{Eliasson}, {\cite[1.4.7]{Klingenberg-book}}] \label{thm:PS}
The energy functional $E:\Lambda M\to \R$ satisfies condition (C) of Palais and Smale~\cite{Palais-Smale}:
\begin{center} \label{conditionC}
\emph{(C)} \quad Let $(\gamma_m)\in\Lambda M$ be a sequence such that $E(\gamma_m)$ is bounded and $\|\nabla E(\gamma_m)\|_1$ tends to zero. Then $(\gamma_m)$ has limit points and every limit point is a critical point of $E$.
\end{center}
\end{theorem}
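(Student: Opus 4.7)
The plan is to first exploit the energy bound together with the compactness of $M$ to extract a $C^0$-convergent subsequence via Arzelà-Ascoli, then to upgrade this to $H^1$-convergence using the Palais-Smale gradient hypothesis, and finally to identify the limit as a critical point by continuity of $\nabla E$.

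From $E(\gamma_m) \leq C$ the Cauchy-Schwarz inequality gives
$$d\bigl(\gamma_m(s),\gamma_m(t)\bigr) \leq \int_s^t |\dot\gamma_m| \leq \sqrt{|t-s|}\,\sqrt{E(\gamma_m)},$$
where $d$ is the Riemannian distance. Hence $(\gamma_m)$ is uniformly Hölder-$1/2$ in $t$, in particular equicontinuous. Since $M$ is closed, Arzelà-Ascoli produces a subsequence, still denoted $(\gamma_m)$, converging in $C^0(S^1,M)$ to some $\gamma_\infty$. To secure a candidate $H^1$-limit I would choose an isometric embedding $M \hookrightarrow \R^N$ and view $(\gamma_m)$ as a bounded sequence in $H^1(S^1,\R^N)$: compactness of $M$ controls the $L^\infty$-norm and the energy bound controls the $L^2$-norm of the derivative. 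After a further extraction, $\gamma_m \rightharpoonup \gamma_\infty$ weakly in $H^1$, and in particular $\gamma_\infty \in \Lambda M$.

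The crux is to promote weak $H^1$-convergence to strong $H^1$-convergence using the gradient hypothesis. Using either the exponential map of $M$ in a tubular neighborhood of $\gamma_\infty$, or pointwise orthogonal projection from $\R^N$ onto $T_{\gamma_m(t)}M$, one constructs a test vector field $\xi_m \in T_{\gamma_m}\Lambda M$ that approximates $\gamma_\infty - \gamma_m$, has $H^1$-norm uniformly controlled by $\|\gamma_m-\gamma_\infty\|_1$, and satisfies $\|D_t\xi_m - (\dot\gamma_\infty - \dot\gamma_m)\|_0 \to 0$ (the curvature and projection errors are absorbed by the $C^0$-convergence of $\gamma_m$). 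The hypothesis $\|\nabla E(\gamma_m)\|_1 \to 0$ and the first variation formula then yield
$$dE(\gamma_m)\xi_m = 2\langle \dot\gamma_m, D_t\xi_m\rangle_0 = o(1),$$
which combined with the weak convergence $\langle\dot\gamma_m,\dot\gamma_\infty\rangle_0 \to \|\dot\gamma_\infty\|_0^2$ forces $E(\gamma_m) \to E(\gamma_\infty)$. Expanding $\|\dot\gamma_m-\dot\gamma_\infty\|_0^2 = E(\gamma_m)+E(\gamma_\infty)-2\langle\dot\gamma_m,\dot\gamma_\infty\rangle_0$ gives strong convergence of the derivatives in $L^2$; together with the $C^0$-convergence this is $H^1$-convergence $\gamma_m \to \gamma_\infty$ in $\Lambda M$. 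Since $\nabla E$ is continuous on $\Lambda M$, we obtain $\nabla E(\gamma_\infty) = 0$, so $\gamma_\infty$ is a closed geodesic.

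The main obstacle is precisely the construction of admissible test vector fields for the strong-convergence step: one cannot directly subtract two loops taking values in $M$, and the test vector must lie in $T_{\gamma_m}\Lambda M$ rather than $T_{\gamma_\infty}\Lambda M$, so one needs a geometric device (Nash embedding with pointwise orthogonal projection, or a parallel-transport-type identification along short geodesics between $\gamma_m(t)$ and $\gamma_\infty(t)$) to transfer $\gamma_\infty-\gamma_m$ to a genuine tangent field and to verify that the resulting error terms are uniformly negligible in the $H^1$-norm.
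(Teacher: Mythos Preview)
Your argument is correct and follows the same opening move as the paper's sketch: produce a $C^0$-limit via the Arzel\`a--Ascoli theorem (using the uniform H\"older-$\frac12$ estimate coming from the energy bound), then upgrade. The paper gives only a one-line indication for the upgrade step, namely ``work in a local chart around a smooth approximation of the limit and use the completeness of $\Lambda M$.'' Your version fills this in along a slightly different but standard route: you embed isometrically in $\R^N$, extract a weak $H^1$-limit, and then test $dE(\gamma_m)$ against a projected difference vector to force $E(\gamma_m)\to E(\gamma_\infty)$, which together with weak convergence yields strong $L^2$-convergence of the derivatives. This is the argument one finds e.g.\ in Struwe's treatment. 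The paper's hint (following Klingenberg) instead suggests showing the sequence is Cauchy in a local exponential chart and invoking completeness of $\Lambda M$; the payoff of that variant is that one never has to produce the limit object ahead of time and the analysis stays intrinsic, whereas your approach has the virtue of being entirely linear once the Nash embedding is in place.

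Your honest flagging of the ``main obstacle'' is exactly right: the construction of the test field $\xi_m\in T_{\gamma_m}\Lambda M$ via pointwise projection (or exponential pullback) and the verification that the resulting curvature/second-fundamental-form error terms are $o(1)$ in $L^2$ is where the genuine work sits. That step is routine but not vacuous; the key estimate is that the projection error is controlled by $\|\gamma_m-\gamma_\infty\|_{C^0}$ times an $L^2$-bounded derivative term, so $C^0$-convergence kills it. Once that is granted, your chain of implications is correct as written.
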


\noindent For the proof of Theorem~\ref{thm:PS} one first produces a $C^0$-limit using the Arzel\`a-Ascoli theorem. This allows one to work in a local chart around a smooth approximation of the limit and use the completeness of $\Lambda M$.

Let $\mathrm{Crit}(E)$ denote the set of critical points of the energy functional, i.e. the set of closed geodesics on $M$. Given $a\ge 0$ we denote 
$$
\Lambda^{\le a}:=\{\gamma\, : \, E(\gamma)\le a\},
$$ 
$$
\Lambda^{< a}:=\{\gamma\, : \, E(\gamma)<a\},
$$ 
and 
$$
\Lambda^a:=\{\gamma\, : \, E(\gamma)= a\}.
$$ 
The following are direct consequences of the Palais-Smale condition (C) (details can be found in~\cite[\S1.4]{Klingenberg-book}). 

\smallskip
\noindent \emph{Properties of the negative gradient flow $\frac d {ds} \phi_s=-\nabla E(\phi_s)$ of the energy functional.}
\renewcommand{\theenumi}{\roman{enumi}}
\begin{enumerate}
\item \label{item:i} $\mathrm{Crit}(E)\cap \Lambda^{\le a}$ is compact for all $a\ge 0$.
\item \label{item:ii} the flow $\phi_s$ is defined for all $s\ge 0$.
\item \label{item:iii} given an interval $[a,b]$ of regular values, there exists $s_0\ge 0$ such that $\phi_s(\Lambda^{\le b})\subset \Lambda^{\le a}$ for all $s\ge s_0$. 
\item \label{item:iv} $\Lambda^0\equiv M\subset \Lambda^{\le \eps}$ is a strong deformation retract via $\phi_s$ for $\eps>0$ small enough.
\end{enumerate}

Let $\gamma\in\mathrm{Crit}(E)$ be a critical point. Because of the $O(2)$-invariance of $E$, the entire orbit $O(2)\cdot\gamma$ is contained in $\mathrm{Crit}(E)$. The \emph{index} $\lambda(\gamma)$ of $\gamma$ is by definition the dimension of the negative eigenspace of the Hessian $d^2E(\gamma)$. The \emph{nullity} $\nu(\gamma)$ of $\gamma$ is by definition the dimension of the null space of $d^2E(\gamma)$. Note that $\nu(\gamma)\ge 1$ if $\gamma$ is a non-constant geodesic since the critical set of $E$ is invariant under the natural $S^1$-action by reparametrization at the source.\footnote{The reader will encounter in the literature also a different convention which defines the nullity to be equal to $\nu(\gamma)-1$. This accounts for the fact that the element $\langle \dot\gamma\rangle$ in the kernel of $d^2E(\gamma)$ does not contain any geometric information. With our convention, the nullity of a geodesic that belongs to a Morse-Bott nondegenerate critical manifold (see p.~\pageref{page:Morse-Bott} below) is equal to the dimension of that manifold. The drawback is that, in the Bott iteration formulas~\eqref{eq:LN} on p.~\pageref{eq:LN}, one has to define in a slightly unnatural way the value of the function $N(z)$ at $z=1$.}

The Hessian of $E$ at $\gamma$ is expressed by the \emph{second variation formula} 
\begin{equation}\label{eq:d2E}
 d^2E(\gamma)(\xi,\eta)=-\int\langle \xi,D^2_t\eta+R(\dot\gamma,\eta)\dot\gamma\rangle.
\end{equation}
Here we use is the $L^2$-inner product, and not the $H^1$-inner product used to define the flow. Note that the Hessian is independent of the inner product, the issue of completion of $\mathcal L M$ put aside. It is useful to recall at this point that the role of the $H^1$-inner product is to guarantee the existence of the gradient flow. 
 
An element $\eta\in T_\gamma\Lambda M$ is called a \emph{Jacobi vector field} if it solves the equation 
\begin{equation} \label{eq:Jacobi}
D^2_t\eta+R(\dot\gamma,\eta)\dot\gamma=0.
\end{equation}
A variation of a geodesic $\gamma$ within the space of geodesics naturally defines a Jacobi vector field along $\gamma$. Conversely, any Jacobi vector field can be obtained in this way. Jacobi vector fields form a vector space, whose  dimension is by definition the nullity $\nu(\gamma)$. In the closed case the nullity satisfies $1\le \nu(\gamma)\le 2n-1$~: the lower bound follows from the fact that $\dot\gamma$ is a Jacobi vector field (see above), while the upper bound follows from the fact that a solution $\eta$ of the 2nd order ODE~\eqref{eq:Jacobi} is uniquely determined by the pair $(\eta(0),D_t\eta(0))$: the pair $(\dot\gamma(0),0)$ gives rise to the Jacobi field $\dot\gamma$, whereas the pair $(0,\dot\gamma)$ gives rise to the vector field $t\dot\gamma(t)$ which is not $1$-periodic and so does not belong to $T_\gamma\Lambda M$.

A closed geodesic $\gamma$ is also a critical point of the energy functional defined on the space $\Omega_p=\Omega(p,p)$ of loops based at $p=\gamma(0)$, and as such has a well-defined index, denoted $\lambda_\Omega(\gamma)$ and called \emph{the $\Omega$-index of $\gamma$}. The $\Omega$-index is expressed by Morse's famous index theorem, which we state now. In order to prepare the statement we introduce the following terminology. Given a geodesic $\gamma:[t_0,t_1]\to M$, we say that \emph{$p_0=\gamma(t_0)$ is conjugate to $p_1=\gamma(t_1)$ along $\gamma$} if there exists a nonzero Jacobi vector field $\eta$ along $\gamma$ such that $\eta(t_0)=0$, $\eta(t_1)=0$. We say that \emph{$p_0$ is conjugate to $p_1$ along $\gamma$ with multiplicity $m$} if the dimension of the space of such Jacobi vector fields is equal to $m$. 

\begin{theorem}[Morse's index theorem~{\cite[Theorem~15.1]{Milnor-Morse_theory}}]
The $\Omega$-index $\lambda_\Omega(\gamma)$ of the geodesic $\gamma:[0,1]\to M$ is equal to the number of points $\gamma(t)$, with $0<t<1$ such that $\gamma(t)$ is conjugate to $\gamma(0)$ along $\gamma$, each such conjugate point being counted with its multiplicity. The $\Omega$-index $\lambda_\Omega(\gamma)$ is finite. 
\end{theorem}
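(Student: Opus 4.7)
The plan is to prove both assertions together via the classical finite-dimensional reduction of the index form combined with a one-parameter deformation along the subgeodesics $\gamma|_{[0,\tau]}$, following Milnor's treatment. Writing $I$ for the symmetric bilinear form defined by~\eqref{eq:d2E} on $T_\gamma \Omega_p$, the index $\lambda_\Omega(\gamma)$ is the maximal dimension of a subspace on which $I$ is negative definite, and the null space of $I$ coincides with the space of Jacobi fields along $\gamma$ vanishing at $t=0$ and $t=1$.

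\textbf{Step 1 (Finite-dimensional reduction).} I would first choose a partition $0 = t_0 < t_1 < \cdots < t_k = 1$ fine enough that each segment $\gamma|_{[t_{i-1},t_i]}$ lies in a geodesically convex open set (such partitions exist by compactness and the local convexity theorem recalled in \S\ref{sec:energy}). Let $\Omega(t_0,\ldots,t_k)\subset\Omega_p$ denote the submanifold of piecewise geodesic loops broken only at the $t_i$; near $\gamma$ it is a finite-dimensional manifold modelled on $M^{k-1}$, with tangent space at $\gamma$ consisting of continuous piecewise Jacobi fields that vanish at the endpoints. Since no subinterval contains a conjugate point, the Jacobi boundary value problem on each $[t_{i-1},t_i]$ is uniquely solvable, and this yields an $I$-orthogonal decomposition
$$
T_\gamma \Omega_p = T_\gamma \Omega(t_0,\ldots,t_k) \oplus V,
$$
where $V$ is the subspace of variations vanishing at every $t_i$. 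A standard integration by parts, combined with the strict minimality of short geodesic segments inside convex neighborhoods, yields $I|_V > 0$. Hence the index of $I$ on $T_\gamma\Omega_p$ equals its index on the finite-dimensional factor, which already settles the finiteness statement.

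\textbf{Step 2 (Variation in the right endpoint).} For $\tau\in(0,1]$ let $\lambda(\tau)$ denote the $\Omega$-index of $\gamma|_{[0,\tau]}$. Fixing a single subdivision that is fine enough uniformly for $\tau$ in a given compact interval, Step~1 realises the family of index forms $I_\tau$ as a continuous family of quadratic forms on one and the same finite-dimensional vector space, whose null spaces are precisely the Jacobi fields along $\gamma|_{[0,\tau]}$ vanishing at both endpoints. Consequently $\lambda(\tau)$ can only jump at the conjugate times of $\gamma$ in $(0,1)$, which form a finite set, and $\lambda(\tau)=0$ for $\tau$ small since a short geodesic segment is strictly minimising in its convex neighborhood.

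\textbf{Main obstacle.} The hard part will be to show that, at a conjugate time $\tau_0$ of multiplicity $m$, one has $\lambda(\tau_0+\eps)-\lambda(\tau_0-\eps)=m$ for all sufficiently small $\eps>0$, and that $\lambda$ is left-continuous. The plan is to construct $m$ explicit $I_{\tau_0+\eps}$-negative directions from the $m$-dimensional space of Jacobi fields $\eta_1,\ldots,\eta_m$ on $[0,\tau_0]$ vanishing at both endpoints: extend each $\eta_i$ by zero on $[\tau_0,\tau_0+\eps]$ and then smooth the resulting corner by a small piecewise geodesic perturbation; a direct computation from~\eqref{eq:d2E}, together with the boundary-term identity $I(\eta,\eta)=-\langle D_t\eta,\eta\rangle\big|_{0}^{\tau}$ valid for a Jacobi field $\eta$, shows that the resulting quadratic form is strictly negative of order $\eps$ on this $m$-dimensional space. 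A continuity argument for the signature of a continuous family of quadratic forms then rules out the appearance of any additional negative direction, while the analogous computation for $\eps<0$ gives positivity and hence left-continuity. Summing these jumps over the finitely many conjugate times in $(0,1)$ yields $\lambda_\Omega(\gamma)=\lambda(1)=\sum_{0<t<1}\mathrm{mult}(\gamma(t))$, which is the claimed equality.
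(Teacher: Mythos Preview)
The paper does not give its own proof of this theorem: it simply states the result and cites Milnor~\cite[Theorem~15.1]{Milnor-Morse_theory}. Your proposal is precisely Milnor's classical argument (finite-dimensional reduction via broken geodesics, monotonicity of $\lambda(\tau)$ in the right endpoint, and the jump computation at conjugate times), so there is nothing to compare against beyond the reference itself.

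One small remark on your ``main obstacle'' paragraph: the boundary-term identity $I(\eta,\eta)=-\langle D_t\eta,\eta\rangle\big|_0^\tau$ applied to a Jacobi field vanishing at both endpoints gives $0$, not something negative; the extended-by-zero fields are \emph{null} vectors of $I_{\tau_0+\eps}$ to leading order, not negative ones. The actual mechanism (as in Milnor) is that on the finite-dimensional space from Step~1 the nullity drops from $m$ to $0$ as $\tau$ passes $\tau_0$, and continuity of the eigenvalues together with the upper-semicontinuity bound $\lambda(\tau)\le \lambda(\tau_0)+m$ for $\tau$ near $\tau_0$ forces exactly $m$ eigenvalues to cross zero downward. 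Equivalently, one perturbs each extended null vector by a variation $X$ with $I_{\tau_0+\eps}(\eta_i,X)\neq 0$ to produce a genuinely negative direction. Your sketch gestures at this but conflates it with the boundary formula; once that is cleaned up the argument is correct and identical to the cited one.
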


\smallskip

Ballmann, Thorbergsson and Ziller proved in~\cite{BTZ1982} that the following inequality holds
$$
\lambda_\Omega(\gamma)\le \lambda(\gamma)\le \lambda_\Omega(\gamma)+n-1.
$$
The quantity $\lambda(\gamma)-\lambda_\Omega(\gamma)$ is called the \emph{concavity} of $\gamma$; it depends on the structure of the Poincar\'e return map, i.e. the time one linearization of the geodesic flow along $\gamma$ (see~\cite[\S1]{BTZ1982}, \cite{Ziller-Katok} and the discussion below).

\smallskip

\begin{example}Ê\label{exple:indexSn} Let $M=S^n$ be the sphere with the round metric. The closed geodesics are the great circles $\gamma_k$ covered $k\ge 1$ times. Given such a circle starting at $p\in S^n$ there are $2k-1$ conjugate points $\gamma_k(t)$ with $0<t<1$: the antipode of $p$ appears $k$ times whereas $p$ itself appears $k-1$ times. Each conjugate point has multiplicity $n-1$, corresponding to the $n-1$-dimensional space of Jacobi fields obtained by letting the geodesic $\gamma_k$ vary within the space of great circles. Thus the $\Omega$-index of $\gamma_k$ is $\lambda_\Omega(\gamma_k)=(2k-1)(n-1)$, $k\ge 1$. It turns out that the concavity of $\gamma_k$ is zero, so that $\lambda(\gamma_k)=(2k-1)(n-1)$ (this is proved in~\cite[Theorem~4]{Ziller} and follows also from the discussion in~\cite[\S1]{BTZ1982}). The nullity of $\gamma_k$ is maximal, i.e. equal to $2n-1$: the value $2n-1$ is a general upper bound in dimension $n$, but for the sphere it is also a lower bound since a geodesic $\gamma_k$ lives naturally in a family of dimension $2n-1$ parametrized by the unit tangent bundle of $S^n$. 
\end{example}

\smallskip 

\begin{example} \label{exple:indexCPn} Let $M=\C P^n$ be the complex projective space, endowed with the Fubini-Study metric induced from the round metric on the unit sphere $S^{2n+1}\subset \C^{n+1}$ by viewing $\C P^n$ as $S^{2n+1}/S^1$. The complex lines $\C P^1\subset \C P^n$ are totally geodesic and isometric to round $2$-spheres of constant curvature $4$. The closed geodesics on $\C P^n$ are the great circles on these $2$-spheres covered $k\ge 1$ times, and denoted $\gamma_k$. Given such a circle $\gamma_k$ starting at $p\in\C P^n$ in the direction $v$, denote $\ell_{p,v}\subset \C P^n$ the unique complex line which passes through $p$, which is tangent to $v$, and which contains $\gamma_k$. There are $2k-1$ points $\gamma_k(t)$, $0<t<1$ which are conjugate to $p$ along $\gamma_k$: the antipode of $p$ on $\ell_{p,v}$ appears $k$ times, whereas $p$ itself appears $k-1$ times. The antipode of $p$ has multiplicity $1$, corresponding to the $1$-dimensional space of Jacobi fields given by letting the geodesic $\gamma_k$ vary within the space of great circles on $\ell_{p,v}$. The point $p$ has multiplicity $2n-1$, corresponding to the $2n-1$-dimensional space of Jacobi fields given by letting the geodesic $\gamma_k$ vary within the space of all geodesics passing through $p$, naturally parametrized by the unit tangent fiber of $\C P^n$ at $p$. The $\Omega$-index of $\gamma_k$ is therefore equal to $\lambda_\Omega(\gamma_k)= k+(k-1)(2n-1)= 2(k-1)n+1$. It turns out that the concavity of $\gamma_k$ is zero, so that $\lambda(\gamma_k)=2(k-1)n+1$~\cite[Theorem~4]{Ziller}. The nullity of $\gamma_k$ is maximal, i.e. equal to $4n-1$: this is a general upper bound in dimension $2n$, but for $\C P^n$ it is also a lower bound since every closed geodesic lives naturally in a family of dimension $4n-1$ parametrized by the unit tangent bundle of $\C P^n$.
\end{example}

Morse theory in its simplest form describes the relationship between the topology of a manifold and the structure of the critical set of a function defined on the manifold~\cite[Part~I]{Milnor-Morse_theory}. A $C^2$-function defined on a Hilbert manifold is said to be a \emph{Morse function} if all its critical points are non-degenerate, meaning that the Hessian has a zero-dimensional kernel at each critical point. This can never happen for the energy functional defined on $\Lambda M$ for two reasons. One the one hand, the critical points at level $0$, i.e. the constant geodesics, form a manifold of dimension $\dim\, M$ and can therefore never be non-degenerate. On the other hand, the energy functional is $S^1$-invariant, so that the kernel of $d^2E(\gamma)$ at a non-constant geodesic $\gamma$ is always at least $1$-dimensional since it contains the infinitesimal generator of the action, which is the vector field $\dot\gamma$ along $\gamma$. Note that neither of these issues arises if one studies the energy functional on the space of paths with fixed and distinct endpoints. 
 
However, the energy functional can successfully be studied by the methods of Morse theory as generalized by Bott in~\cite{Bott1954}. A $C^2$-function defined on a Hilbert manifold is said to be a \emph{Morse-Bott function} \label{page:Morse-Bott} if its critical set is a disjoint union of closed (connected) submanifolds and, for each critical point, the kernel of the Hessian at that critical point coincides with the tangent space to (the relevant connected component of) the critical locus. In this case we say that the critical set is \emph{non-degenerate}. The \emph{index} of a critical point is defined as the dimension of a maximal subspace on which the Hessian is negative definite. The \emph{nullity} of a critical point is defined as the dimension of the kernel of the Hessian. The index and nullity are constant over each connected component of the critical set. 

In the case of the energy functional $E:\Lambda M\to \R_+$, the critical manifold of absolute minima at zero level, consisting of constant loops, is always non-degenerate~\cite[Proposition~2.4.6]{Klingenberg-book} and obviously has index $0$. It turns out that, for a generic choice of metric, one can achieve that all the critical orbits $S^1\cdot\gamma$ corresponding to closed non-constant geodesics are non-degenerate. The non-degeneracy condition means in this particular case that $\ker\,d^2E(\gamma)=\langle\dot\gamma\rangle$ and $d^2E(\gamma)$ is non-degenerate on $V:=\langle\dot\gamma\rangle^\perp$. Such metrics are sometimes referred to as \emph{bumpy metrics}. We shall see in~\S\ref{sec:GM} that the index of any closed geodesic is finite, as a consequence of the ellipticity of the linearization of the equation of closed geodesics. 

In case the manifold $M$ admits a metric with a large group of symmetries, the energy functional naturally admits higher-dimensional critical manifolds, and in good situations these are non-degenerate. We shall see two such explicit instances for spheres and complex projective spaces in~\S\ref{sec:completing}.

\smallskip

\begin{theorem}[\cite{Palais}, \cite{Bott1954}] \label{thm:Morse}
Assume that the Riemannian metric on $M$ is chosen such that the energy functional $E$ on $\Lambda M$ is Morse-Bott. 
\begin{enumerate}
\item The critical values of $E$ are isolated and there are only a finite number of connected components of $\mathrm{Crit}(E)$ on each critical level.
\item The index and nullity of each connected component of $\mathrm{Crit}(E)$ are finite. 
\item If there are no critical values of $E$ in $[a, b]$ then $\Lambda^{\le b}$ retracts onto (and is actually diffeomorphic to) $\Lambda^{\le a}$.
\item \label{item:crossing} Let $a<c < b$ and assume $c$ is the only critical value of $E$ in the interval $[a, b]$. Denote $N_1,\dots,N_r$ the connected components of $\mathrm{Crit}(E)$ at level $c$, and denote $\lambda_1,\dots,\lambda_r$ their respective indices. 
\begin{itemize}
\item Each manifold $N_i$ carries a well-defined vector bundle $\nu^-N_i$ of rank $\lambda_i$ consisting of negative directions for $d^2E|_{N_i}$.  
\item The sublevel set $\Lambda^{\le b}$ retracts onto a space homeomorphic to $\Lambda^{\le a}$ 
with the disc bundles $D\nu^-N_i$ disjointly attached to $\Lambda^{\le a}$ along their boundaries.
\end{itemize} 
\end{enumerate} 
\end{theorem}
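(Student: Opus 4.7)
The strategy is to derive all four assertions from the Palais--Smale condition (Theorem~\ref{thm:PS}), properties (i)--(iv) of the negative gradient flow recorded just after it, and the Morse--Bott hypothesis.

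I would dispatch (1) and (2) together. For (1), if critical values $c_n \to c$ existed, then picking $\gamma_n \in \mathrm{Crit}(E) \cap E^{-1}(c_n)$ yields a sequence with $\|\nabla E(\gamma_n)\|_1 = 0$ and $E(\gamma_n)$ bounded; by (C) a subsequence converges to some $\gamma \in \mathrm{Crit}(E)$, and the Morse--Bott hypothesis forces $E$ to be locally constant on the critical component through $\gamma$, contradicting $c_n \neq c$ eventually. Finiteness of components on a single critical level follows from property~(i): $\mathrm{Crit}(E) \cap E^{-1}(c)$ is compact, and by Morse--Bott each component is open in it. For (2), the second variation formula~\eqref{eq:d2E}, after integration by parts, reads $d^2E(\gamma)(\xi,\eta) = \langle\xi,\eta\rangle_1 - \langle\xi,\eta\rangle_0 - \int \langle \xi, R(\dot\gamma,\eta)\dot\gamma\rangle$, which represents the Hessian with respect to the $H^1$-metric as $\mathrm{Id} - K$ with $K$ compact self-adjoint (the compactness of the inclusion $H^1 \hookrightarrow C^0$ noted in \S\ref{sec:energy} is what drives this). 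Such operators have finite-dimensional kernel and finite-dimensional negative eigenspace, giving finiteness of the nullity and index.

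For (3), I would first establish that $\inf\bigl\{\|\nabla E(\gamma)\|_1^2 : \gamma \in E^{-1}([a,b])\bigr\} > 0$; otherwise (C) produces a critical point in the strip, contradicting the hypothesis. The negative gradient flow $\phi_s$ then decreases $E$ at a uniformly positive rate on this strip, so by (ii)--(iii) every point of $\Lambda^{\le b}$ enters $\Lambda^{\le a}$ in uniformly bounded time; reparametrizing $s$ gives the desired strong deformation retract.

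Part (4) is the heart of the theorem. The plan is to establish a Morse--Bott normal form around each connected critical submanifold $N_i$: a tubular neighborhood of $N_i$ in $\Lambda M$ that is modelled on a Hilbert bundle $\nu N_i = \nu^-N_i \oplus \nu^+N_i$ over $N_i$, in coordinates of which $E$ takes the exact form $c + Q^+(\xi^+) - Q^-(\xi^-)$ with positive-definite quadratic forms $Q^\pm$ on the two summands. The finiteness of the rank of $\nu^-N_i$ is inherited from (2), while the splitting smoothly in the base direction comes from the Morse--Bott assumption along $N_i$. In this local model the disc bundle $D\nu^-N_i$ is attached along its sphere bundle, and the gradient flow used in (3) retracts the complement of the tubular neighborhoods first from $\Lambda^{\le b}$ down to a neighborhood of level $c$, and then collapses the positive Hessian directions onto the zero section of $\nu^-N_i$ over $N_i$. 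The finiteness statement in (1) ensures that only finitely many such disc bundles are simultaneously attached.

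The main obstacle is the Morse--Bott normal form on the infinite-dimensional Hilbert manifold $\Lambda M$: one must smoothly diagonalize the Hessian as one varies along $N_i$ and then apply a parametric Morse lemma (Moser's trick, in the Palais--Bott generalization) to kill the cubic and higher terms of $E$ normal to $N_i$. The compact perturbation structure of $d^2E$ obtained in~(2) is precisely what guarantees that the stable and unstable bundles $\nu^\pm N_i$ exist as \emph{smooth} subbundles of the normal bundle of $N_i$ and that $\nu^-N_i$ has finite rank. Once the normal form is in hand, the attaching description reduces to a routine gradient-flow construction combining parts (2) and (3) inside and outside the tubular neighborhoods.
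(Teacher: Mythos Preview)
Your proposal is correct and follows the standard route. Note, however, that the paper does not actually prove this theorem: it is stated with attribution to Palais and Bott, and the text that follows is only an intuitive explanation (the finite-dimensional quadratic model near a critical point, retraction via a modified gradient flow as in Milnor, and the remark that in the Morse--Bott case ``this whole picture has to be thought of in a family parametrized by the connected component $N_i$''). Your sketch fills in precisely the details the paper gestures at---the Palais--Smale argument for (1), the $\mathrm{Id}-K$ structure of the Hessian for (2), the uniform gradient bound for (3), and the parametric Morse--Bott lemma for (4)---so the two are entirely aligned, with yours simply being more explicit.

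One small wording issue in your argument for (1): saying ``the Morse--Bott hypothesis forces $E$ to be locally constant on the critical component through $\gamma$'' is not quite the point (that is automatic). What you need is that the Morse--Bott hypothesis forces all critical points \emph{near} $\gamma$ to lie on that same component; then $\gamma_n$ eventually lands on it and $E(\gamma_n)=c$, giving the contradiction. This is implicit in what you wrote, but worth stating cleanly.
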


In the statement of the theorem we have denoted by $D\nu^-N_i$ the disc bundles associated to some fixed scalar product on the fibers of $\nu^-N_i$. Of course, one can think of $\nu^-N_i$ as being a sub-bundle of $TN_i^\perp$ with induced scalar product coming from the Hilbert structure on $\Lambda M$. The meaning of disjointly attaching $D\nu^-N_i$ to $\Lambda^{\le a}$ along the boundary is the following: there exist smooth embeddings $\varphi_i:\p D\nu^-N_i\to\p\Lambda^{\le a}$ with disjoint images, with respect to which one can form the quotient space $\Lambda^{\le a}\cup\bigcup_i D\nu^-N_i/\sim$, where a point in $\p D\nu^-N_i$ is identified with its image in $\p\Lambda^{\le a}$ via $\varphi_i$. Note that we actually have $\p\Lambda^{\le a}=\Lambda^a$.

Figure~\ref{fig:Morse} provides an intuitive explanation for~\eqref{item:crossing} in the above theorem, in the case of a Morse function $f$ defined on a finite-dimensional manifold. In the neighborhood of a critical point of index $\lambda$,  the local model for $f$ is provided by the quadratic form $(x_1,\dots,x_n)\mapsto c-x_1^2-\dots-x_\lambda^2+x_{\lambda+1}^2+\dots+x_n^2$ defined in a neighborhood of $0\in\R^n$. As one crosses the critical level $c$, the sublevel $c+\eps$ with $\eps>0$ small enough retracts onto the union of the sublevel $c-\eps$ and of a $\lambda$-dimensional cell. The retraction is provided by a suitable modification of the negative gradient flow of $f$~\cite[Part~I, \S3]{Milnor-Morse_theory}. To relate this picture to the above theorem, one should interpret this $\lambda$-dimensional cell as the negative vector bundle over the critical manifold which consists of a point. 
In the Morse-Bott case, this whole picture has to be thought of in a family parametrized by the connected component $N_i$ of the critical locus which contains the critical point. This is the reason for the appearance of the vector bundles of rank $\lambda_i$ in the previous theorem. The retraction and the gluing maps are again provided by suitable modifications of the negative gradient flow. 

\begin{figure}[htp]      
\centering       
\includegraphics[scale=0.8]{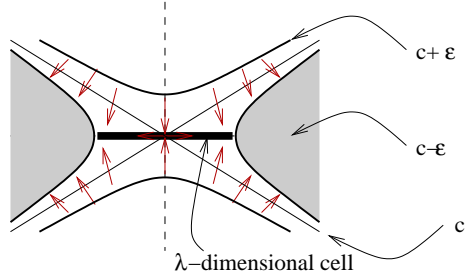}      
\caption{{Crossing a critical value in a local model.}}\label{fig:Morse}      
\end{figure} 

The statement of~\eqref{item:crossing} can be further enhanced as follows.

\begin{corollary}
Under the assumptions and notations of~\eqref{item:crossing} in the above theorem, the following hold true:
\begin{itemize}
\item the sublevel set $\Lambda^{\le b}$ retracts onto $\Lambda^{\le c}$,
\item the sublevel set $\Lambda^{< c}$ retracts onto $\Lambda^{\le a}$.
\end{itemize}
\end{corollary}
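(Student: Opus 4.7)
My plan is to deduce both retractions from the negative gradient flow $\phi_s$ of $E$ combined with the Palais--Smale condition (Theorem~\ref{thm:PS}), invoking the Morse--Bott hypothesis only at the critical level $c$. The starting observation, used throughout, is that since critical values of $E$ are isolated (Theorem~\ref{thm:Morse}(1)) and $c$ is the unique critical value in $[a,b]$, a standard Palais--Smale compactness argument yields, for each $\epsilon > 0$, a uniform lower bound $\|\nabla E(\gamma)\|_1 \ge \delta_\epsilon > 0$ on the union $E^{-1}([a, c-\epsilon]) \cup E^{-1}([c+\epsilon, b])$.

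I would treat the second assertion first, as it is simpler. Given $x \in \Lambda^{<c}$, set $\tau(x) := \inf\{s \ge 0 : E(\phi_s(x)) \le a\}$. If $\tau(x) = +\infty$, the identity $E(x) - E(\phi_s(x)) = \int_0^s \|\nabla E(\phi_u(x))\|_1^2\, du$ combined with the lower bound above forces the existence of a zero of $\nabla E$ in $E^{-1}((a, E(x)])$, contradicting the absence of critical values in $(a, c)$. Hence $\tau(x) < \infty$, and the same lower bound makes $\tau$ continuous on each $\Lambda^{\le c'}$ with $c' < c$. The formula $H(x, t) := \phi_{t\tau(x)}(x)$ then defines the desired deformation retraction of $\Lambda^{<c}$ onto $\Lambda^{\le a}$.

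The first assertion is more delicate, because $c$ is itself a critical value. For $x \in \Lambda^{\le b}$ with $E(x) > c$, the energy $E(\phi_s(x))$ is strictly decreasing, and two cases arise: \textbf{(I)} the value $c$ is attained at some finite time $T(x)$, or \textbf{(II)} $E(\phi_s(x)) > c$ for all $s$ with $E(\phi_s(x)) \to c$. In case~(II), finiteness of $\int_0^\infty \|\nabla E(\phi_s(x))\|_1^2\, ds = E(x) - c$ and Palais--Smale produce an accumulation point of $\{\phi_{s_n}(x)\}$ on $\mathrm{Crit}(E) \cap \Lambda^c$, and the Morse--Bott hypothesis upgrades this to convergence to a single critical point $r_\infty(x)$. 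I would then define $H : \Lambda^{\le b} \times [0, 1] \to \Lambda^{\le b}$ by $H(x, t) := x$ for $x \in \Lambda^{\le c}$, and for $E(x) > c$ by $H(x, t) := \phi_{g(x, t)}(x)$ where $g(x, t) \in [0, +\infty)$ is determined by $E(H(x, t)) = (1-t) E(x) + t c$, setting $H(x, 1) := \phi_{T(x)}(x)$ in case~(I) and $H(x, 1) := r_\infty(x)$ in case~(II).

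The main obstacle will be the continuity of $H$ at $t = 1$ along $\mathrm{Crit}(E) \cap \Lambda^c$, where the two cases meet: the flow time $g(x, t)$ diverges in case~(II) while staying bounded for nearby points in case~(I). To handle this I would invoke the Morse--Bott normal form near each component $N_i$ of $\mathrm{Crit}(E) \cap \Lambda^c$, in which $T\Lambda M$ splits orthogonally along $N_i$ into $TN_i$, the positive eigenspace $\nu^+ N_i$, and the negative eigenspace $\nu^- N_i$ of the Hessian, and in which the negative gradient flow linearizes to exponential contraction along $\nu^+$ and expansion along $\nu^-$. In this model $H(\cdot, 1)$ is precisely the projection used in Theorem~\ref{thm:Morse}\eqref{item:crossing} to attach the disc bundle $D\nu^- N_i$, and its continuity reduces to the continuous dependence of ODE solutions on initial conditions transverse to $N_i$.
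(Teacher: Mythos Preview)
The paper does not give a proof of this Corollary; it is stated as an immediate enhancement of Theorem~\ref{thm:Morse}\eqref{item:crossing}, whose construction (a modified gradient flow as in Milnor and Bott) already produces both retractions as intermediate steps. Your direct argument via the unmodified negative gradient flow, reparametrized by energy, is an equally standard route and is correct.

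Your treatment of the second bullet is clean and needs no further comment. For the first bullet your case split (I)/(II) and the appeal to the Morse--Bott normal form are the right ingredients. The continuity of $H$ at $t=1$ near $\mathrm{Crit}(E)\cap\Lambda^c$ does reduce to the local model $E=c-|v|^2+|w|^2$: a short computation shows that the endpoint on $E^{-1}(c)$ of the flow line through $(u,v,w)$ with $|w|>|v|$ is $(u,\,v|w|^{1/2}|v|^{-1/2},\,w|v|^{1/2}|w|^{-1/2})$, which depends continuously on $(u,v,w)$ and tends to $(u,0,0)$ as $v\to 0$, matching $r_\infty$ in case~(II). You might spell this out rather than leave it as a remark.

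One small imprecision in your last sentence: the map $H(\cdot,1)$ is \emph{not} literally the attaching map of Theorem~\ref{thm:Morse}\eqref{item:crossing}---that map glues $\partial D\nu^-N_i$ into $\Lambda^a$, whereas your $H(\cdot,1)$ lands in $\Lambda^{\le c}$. The two are built from the same Morse--Bott normal form, so the spirit of your remark is correct, but the identification as stated is off.
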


The above geometric theorem can be turned into an effective \emph{iterative} way of handling the homology groups of $\Lambda M$ using the increasing filtration $\Lambda^{\le a}$, with respect to which we have $\displaystyle \Lambda M = \lim_{\stackrel \longrightarrow {a\to\infty}}\Lambda^{\le a}$, and thus $\displaystyle H_\cdot(\Lambda M)=\lim_{\stackrel \longrightarrow {a\to\infty}} H_\cdot(\Lambda^{\le a})$. Indeed, assuming $E$ is non-degenerate (which includes the case of a generic metric) the sequence $0=c_0<c_1<c_2<\dots$ of its critical values is diverging and the following hold. 
\begin{itemize}
\item if $c_i<a<c_{i+1}$ then the inclusion $\Lambda^{\le c_i}\hookrightarrow \Lambda^{\le a}$ induces an isomorphism 
$$
H_\cdot(\Lambda^{\le c_i})\stackrel \sim \to H_\cdot(\Lambda^{\le a}).
$$
In particular the value of $H_\cdot(\Lambda^{\le a})$ ``jumps'' only when $a$ crosses a critical value. 
\item the change in homology upon passing a critical level $c$ is encoded in the homology long exact sequence of the pair $(\Lambda^{\le c},\Lambda^{< c})$, i.e. 
\begin{equation} \label{eq:lespair}
\dots\longrightarrow H_\cdot(\Lambda^{<c})\longrightarrow H_\cdot(\Lambda^{\le c})\longrightarrow H_\cdot(\Lambda^{\le c},\Lambda^{< c})\stackrel {[-1]} \longrightarrow\dots
\end{equation}
Denoting by $N^c:=\sqcup_{i=1}^r N_i$ the critical set at level $c$, we have
\begin{eqnarray*}
H_\cdot(\Lambda^{\le c},\Lambda^{<c}) & \simeq & H_\cdot(\sqcup_i D\nu^-N_i,\sqcup_i \p D\nu^-N_i) \\
& \simeq & \oplus_i H_{\cdot-\lambda_i}(N_i;o_{\nu^-N_i}).
\end{eqnarray*}
Here the first isomorphism is induced by excision and follows from~\eqref{item:crossing} in the previous theorem. The second isomorphism is the general form of the Thom isomorphism for finite rank vector bundles which are not necessarily assumed to be orientable. The notation $H_\cdot(N_i;o_{\nu^-N_i})$ stands for the homology groups of $N_i$ with coefficients in the local system of orientations of the bundle $\nu^-N_i$~\cite{Bott1954}. Alternatively, this is the homology of $N_i$ with coefficients in the flat line bundle $\det\, \nu^-N_i$. In particular, we see that the change in topology upon crossing the level $c$ is determined solely by local data along the non-degenerate critical set of $E$. 
\end{itemize} 

\begin{remark}[On local coefficients] \label{rmk:invisible} We refer to~\cite[\S5.3]{McC} for a discussion of homology with local coefficients, as well as to the original paper of Steenrod~\cite{Steenrod}. Rather than making explicit the definition, we shall content ourselves with spelling out an example, namely that of a closed geodesic which is ``homologically invisible''. 
Assume that $\gamma$ is a non-constant and non-degenerate closed geodesic of index $\lambda$, so that $S^1\cdot\gamma$ is a Morse-Bott non-degenerate critical manifold. Assume further that the negative bundle $\nu^-\to S^1\cdot\gamma$ is not orientable. Then we have $H_\cdot(S^1\cdot\gamma;o_{\nu^-})=0$ if we use coefficients in a ring where $2$ is invertible. Indeed, computing the homology using a cellular decomposition of $S^1\cdot\gamma$ with two cells denoted $e_0$ (in dimension $0$), respectively $e_1$ (in dimension $1$), we have $\p e_1= e_0 - \epsilon e_0$, where $\epsilon\in\{\pm 1\}$ is the sign given by the monodromy of the orientation local system around the loop $S^1\cdot\gamma$. In the non-orientable case this sign is indeed equal to $-1$, so that $\p e_1=2 e_0$. We refer to such a closed geodesic as being ``homologically invisible'' since, if $S^1\cdot\gamma$ was the only critical manifold at level $c$, the homology with coefficients in a ring in which $2$ is invertible would not change upon crossing the critical level.

These geodesics play a distinctive role in Rademacher's resonance formula~\cite[Theorem~3]{Rademacher1989}. In the notation of that paper, they are responsible for the appearance of coefficients $\gamma_c\in\{\pm \frac 1 2\}$ in the resonance formula. These half-integer coefficients reflect the fact that, in the tower of iterates of a simple geodesic $c$, the even iterates should not be counted since they are ``homologically invisible''. Examples of such homologically invisible geodesics are the even iterates of hyperbolic geodesics of odd index; a geodesic is said to be hyperbolic if the eigenvalues of its linearized Poincar\'e return map, which is a symplectic matrix, lie outside the unit circle (see the discussion following Theorem~\ref{thm:Bott} below).  
\end{remark}

We shall see in~\S\ref{sec:completing} that, for the homogeneous metrics on spheres or complex projective spaces, all the geodesics are homologically visible, and moreover the connecting homomorphisms in the long exact sequences of the pair $(\Lambda^{\le c},\Lambda^{<c})$ vanish for all critical values $c$. This will provide a geometric way of computing the homology groups $H_\cdot(\Lambda S^n)$ and $H_\cdot(\Lambda \C P^n)$ with arbitrary coefficients. 

\section{The Lyusternik-Fet theorem} \label{sec:Lyusternik-Fet}

In this section we address the first question asked in the introduction, namely the existence of at least one nontrivial closed geodesic on a given closed Riemannian manifold $M$. 
As already discussed, the case of non-simply connected manifolds follows directly from the properties of the gradient flow of the energy functional. 

\begin{theorem}[Cartan] \label{thm:Cartan}
Assume $M$ is closed and $\pi_1(M)$ is nontrivial. There is a closed geodesic in each nontrivial free homotopy class or, equivalently, in each nontrivial conjugacy class of $\pi_1(M)$. 
\end{theorem}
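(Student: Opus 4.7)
The plan is to minimize the energy functional in the connected component of $\Lambda M$ corresponding to the prescribed free homotopy class, and to produce the desired closed geodesic as an actual minimizer by invoking the Palais--Smale condition.

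To begin, I would note that, $M$ being connected, the connected components of $\Lambda M$ are in bijection with free homotopy classes of loops in $M$, equivalently with conjugacy classes of $\pi_1(M)$. Pick the component $\Lambda_\alpha M \subset \Lambda M$ associated to the nontrivial class $\alpha$. It is both open and closed in $\Lambda M$, hence itself a complete Hilbert manifold, and it is preserved by the negative gradient flow of $E$ since the flow is continuous in time. Define
\[
c := \inf_{\gamma \in \Lambda_\alpha M} E(\gamma).
\]

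The first substantive step is to show $c>0$. Cover the compact manifold $M$ by finitely many geodesically convex balls $U_1,\dots,U_N$ and let $\rho>0$ be a Lebesgue number of this cover. By Cauchy--Schwarz, $L(\gamma)\le\sqrt{E(\gamma)}$. If $\sqrt{E(\gamma)}<\rho$, partition $S^1$ into finitely many small sub-intervals so that the $\gamma$-image of each one has length less than $\rho$ and therefore lies in a single convex ball; the canonical short-geodesic homotopies inside these balls then contract $\gamma$ to a point. So any $\gamma\in\Lambda_\alpha M$ satisfies $E(\gamma)\ge\rho^2$, whence $c\ge\rho^2>0$.

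The second substantive step is to show $c$ is attained. Since $\alpha\neq 1$, $\Lambda_\alpha M$ contains no constant loop, so all critical points of $E|_{\Lambda_\alpha M}$ have energy $\ge c$. Assume, for contradiction, that there is no critical point at level exactly $c$. By property~(i) of the gradient flow listed after Theorem~\ref{thm:PS}, the set of critical points in $\Lambda_\alpha M$ of energy $\le c+1$ is compact, so its $E$-image is a closed subset of $[c,c+1]$ which, by hypothesis, avoids $c$; hence there exists $\delta>0$ such that $[c-\delta,c+\delta]$ contains no critical value of $E|_{\Lambda_\alpha M}$. Since the flow preserves $\Lambda_\alpha M$, property~(iii) yields $s_0>0$ with $\phi_{s_0}(\Lambda^{\le c+\delta}\cap\Lambda_\alpha M)\subset\Lambda^{\le c-\delta}\cap\Lambda_\alpha M$. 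Picking $\gamma$ in the source (nonempty by definition of $c$) produces a loop $\phi_{s_0}(\gamma)\in\Lambda_\alpha M$ of energy $\le c-\delta<c$, contradicting $c=\inf$. Therefore a minimizer $\gamma_\infty\in\Lambda_\alpha M$ exists at level $c$; it is an $H^1$-solution of $D_t\dot\gamma=0$, smooth by elliptic regularity, and nonconstant because $E(\gamma_\infty)=c>0$. This is the required closed geodesic in the class $\alpha$.

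The main obstacle is the second substantive step, where one turns the existence of a minimizing sequence into the existence of a genuine critical point. This is precisely where the Palais--Smale condition (Theorem~\ref{thm:PS}) and the completeness of the $H^1$-metric on $\Lambda M$ enter crucially; without them a naive minimization would only yield a weak limit that could fail to be smooth or even to remain in the correct free homotopy class.
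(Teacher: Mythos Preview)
Your proof is correct and follows essentially the same approach as the paper: minimize $E$ on the component $\Lambda_\alpha$, show the infimum is positive via $L\le\sqrt{E}$ and contractibility of short loops, then show the infimum is a critical value by contradiction using the deformation property~(iii) of the negative gradient flow. Your argument is slightly more detailed in places (Lebesgue number and explicit use of property~(i) to find the interval of regular values), but the structure is identical.
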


\begin{proof}
Let us fix a nontrivial free homotopy class $\alpha$, and denote the corresponding connected component of $\Lambda M$ by $\Lambda_\alpha$. Denote by  
$$
m_\alpha:=\inf\,\{E(\gamma)\, : \, \gamma\in \Lambda_\alpha\}
$$ 
the minimal energy of loops in $\Lambda_\alpha$. 

We claim that $m_\alpha$ is positive. 
Arguing by contradiction, we would find elements in $\Lambda_\alpha$ of arbitrarily small energy, hence of arbitrarily small length in view of the inequality $L\leq \sqrt E$. Because the injectivity radius of a compact manifold is strictly positive, such loops would necessarily be contained in a ball and we would conclude that $\alpha$ is the trivial free homotopy class. 

We now claim that $m_\alpha$ is a critical value of $E|_{\Lambda_\alpha}$. Any critical point on the level $m_\alpha$ is then a nontrivial closed geodesic in view of the positivity of $m_\alpha$.  To prove the claim, assume that $m_\alpha$ is a regular value. The Palais-Smale condition implies that $[m_\alpha-\eps,m_\alpha+\eps]$ is an interval of regular values for $\eps>0$ small enough. Denoting $\phi_s$, $s\ge 0$ the negative gradient flow of the energy functional, property~\eqref{item:iii} on page~\pageref{item:iii} ensures that $\phi_s(\Lambda_\alpha^{\le (m_\alpha+\eps)})\subset \Lambda_\alpha^{\le (m_\alpha-\eps)}$ for $s\gg 0$. Since $\Lambda_\alpha^{\le (m_\alpha+\eps)}$ is nonempty, we would obtain that $\Lambda_\alpha^{\le (m_\alpha-\eps)}$ is nonempty as well, but this would contradict the definition of $m_\alpha$.
\end{proof}

The positivity of $m_\alpha$ in the above proof is essential, and this fact rests on the nontriviality of the chosen free homotopy class $\alpha$. On a simply connected manifold the previous proof would break down in that it would only produce a constant geodesic. With this in view, the reader is perhaps now ready to appreciate the celebrated theorem of Lyusternik-Fet~\cite{Lyusternik-Fet}.

\begin{theorem}[Lyusternik-Fet~\cite{Lyusternik-Fet}, {\cite[Theorem~2.1.6]{Klingenberg-book}}] \label{thm:LS}
Any closed simply connected Riemannian manifold carries at least one closed geodesic. 
\end{theorem}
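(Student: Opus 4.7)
The strategy is to adapt Cartan's argument (Theorem~\ref{thm:Cartan}), replacing the role of free homotopy in $\Lambda M$ by a \emph{relative} homotopy class in the pair $(\Lambda M, M)$, where $M\hookrightarrow \Lambda M$ is the inclusion of constant loops. The homotopical input comes from higher homotopy groups of $M$: since $M$ is closed and connected it has non-trivial top homology, hence is not contractible, so being simply connected it must have $\pi_k(M)\neq 0$ for some smallest $k\geq 2$ (by Hurewicz, $k$ is the smallest integer with non-vanishing reduced homology). The plan is to turn a non-zero class in $\pi_k(M)$ into a mountain-pass critical value of $E$ lying strictly above $0$.

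First I would translate $\pi_k(M)\neq 0$ into a non-trivial class in $\pi_{k-1}(\Lambda M, M)$. The evaluation $\mathrm{ev}:\Lambda M\to M$, $\gamma\mapsto\gamma(0)$, is a Serre fibration with fiber the based loop space $\Omega M$ and with the inclusion of constant loops as a section. The long exact sequence of the pair $(\Lambda M,M)$, together with the connecting isomorphism $\pi_{k-1}(\Omega M)\cong \pi_k(M)$ coming from the path-space fibration, yields an isomorphism $\pi_{k-1}(\Lambda M,M)\cong \pi_k(M)\neq 0$. Concretely, a generator may be produced by choosing $f:S^k\to M$ with $[f]\neq 0$, viewing $S^k$ as a quotient of $S^1\times S^{k-1}$, and rewriting $f$ as a map $\phi_0:(D^{k-1},\partial D^{k-1})\to (\Lambda M,M)$ (loops parametrized by the cube, degenerating to constants on the boundary sphere).

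Next I would set up the minimax. Define
\[
c \;:=\; \inf_{\phi\sim\phi_0}\,\max_{x\in D^{k-1}} E(\phi(x)),
\]
where the infimum is taken over all continuous maps $\phi:(D^{k-1},\partial D^{k-1})\to(\Lambda M,M)$ homotopic to $\phi_0$ as maps of pairs. I would then prove $c>0$ by contradiction: if $c=0$, choose a representative $\phi$ with $\max E\circ\phi<\eps$ for the $\eps>0$ furnished by property~\eqref{item:iv} on page~\pageref{item:iv}, so that the image of $\phi$ lies in $\Lambda^{\le\eps}$, which strongly deformation retracts onto the constant loops $M$; the retraction fixes $M$ pointwise (the flow $\phi_s$ vanishes on critical points), so it is admissible as a homotopy of pairs and deforms $\phi$ to a map with image in $M$, contradicting $[\phi_0]\neq 0\in\pi_{k-1}(\Lambda M,M)$.

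Finally I would argue that $c$ is a critical value, which combined with $c>0$ produces a non-constant closed geodesic. Assume for contradiction that $c$ is regular; by Palais--Smale (Theorem~\ref{thm:PS}) the interval $[c-\eps,c+\eps]$ consists of regular values for some small $\eps>0$, and property~\eqref{item:iii} on page~\pageref{item:iii} provides $s_0$ with $\phi_{s_0}(\Lambda^{\le c+\eps})\subset \Lambda^{\le c-\eps}$. Choosing a representative $\phi$ of $[\phi_0]$ with $\max E\circ\phi<c+\eps$ and composing with $\phi_{s_0}$, one gets a map $\phi_{s_0}\circ\phi$ with $\max E<c-\eps$; since $\phi_s$ fixes $M$ pointwise this composition is still a map of pairs homotopic to $\phi$, contradicting the definition of $c$. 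The main delicate point — and really the only subtle step — is the verification that $c>0$: it is the precise place where the topology of $M$ (through the non-triviality of $\pi_k(M)$) is played off against the local geometry of $\Lambda M$ near its bottom stratum of constant loops, and it is what fails in the contractible case.
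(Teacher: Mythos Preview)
Your proof is correct and follows essentially the same minimax strategy as the paper: both exploit a nontrivial higher homotopy class coming from $\pi_k(M)\neq 0$ (via Hurewicz) to produce a flow-invariant family, and both establish positivity of the minimax value using the retraction of $\Lambda^{\le\eps}$ onto $M$. The only cosmetic difference is that you work with the relative group $\pi_{k-1}(\Lambda M,M)\cong\pi_k(M)$ and maps of pairs $(D^{k-1},\partial D^{k-1})\to(\Lambda M,M)$, whereas the paper works with the absolute group $\pi_{k-1}(\Lambda M)\cong\pi_k(M)$ (using that $\pi_{k-1}(M)=0$ by minimality of $k$) and maps $S^{k-1}\to\Lambda M$; your packaging is arguably a bit cleaner since the triviality upon retraction into $M$ is automatic for relative classes.
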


The proof rests on the minimax principle, which we phrase according to Klingenberg~\cite[\S2.1]{Klingenberg-book}. Denote again $\phi_s:\Lambda M\to \Lambda M$ for $s\ge 0$ the negative gradient flow of the energy functional. A \emph{flow-family} $\mathcal{A}$ is a nonempty collection of nonempty subsets of $\Lambda M$ such that
\begin{itemize}
\item $E|_A$ is bounded for every $A\in\mathcal A$;
\item $A\in\mathcal{A}$ implies $\phi_sA\in\mathcal A$ for all $s\ge 0$. 
\end{itemize}

\begin{proposition}\label{prop:minimax}
Given a flow-family $\mathcal{A}$, the number 
$$
\kappa_{\mathcal{A}} = \inf_{A\in\mathcal A} \sup_A E
$$
is a critical value. 
\end{proposition}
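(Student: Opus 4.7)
The plan is a standard deformation argument via the negative gradient flow, carried out by contradiction. Suppose $\kappa:=\kappa_{\mathcal{A}}$ is a regular value of $E$. I will use the Palais--Smale condition together with property~\eqref{item:iii} on page~\pageref{item:iii} to push a nearly optimal set in $\mathcal{A}$ strictly below $\kappa$, thereby contradicting the very definition of $\kappa$ as an infimum.

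\textbf{Step 1: Isolate a slab of regular values around $\kappa$.} First I would argue that, since $\kappa$ is assumed regular and $\mathrm{Crit}(E)\cap\Lambda^{\le a}$ is compact for every $a\ge 0$ (property~\eqref{item:i}), there exists $\eps>0$ such that the entire interval $[\kappa-\eps,\kappa+\eps]$ contains no critical values. Indeed, otherwise one could extract a sequence of critical points with critical values converging to $\kappa$, and by compactness a limit point would be a critical point at level $\kappa$, contradicting regularity. (This is the standard consequence of condition~(C) stated just before the list on page~\pageref{item:i}.)

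\textbf{Step 2: Deform sublevel sets.} By property~\eqref{item:iii} of the negative gradient flow $\phi_s$, there exists $s_0\ge 0$ such that
\[
\phi_{s_0}\bigl(\Lambda^{\le \kappa+\eps}\bigr)\subset \Lambda^{\le \kappa-\eps}.
\]
Here one uses that the slab $[\kappa-\eps,\kappa+\eps]$ consists of regular values, so that the Palais--Smale condition forces $\|\nabla E\|_1$ to be uniformly bounded away from zero on $E^{-1}([\kappa-\eps,\kappa+\eps])$, which in turn provides a uniform rate at which $E\circ\phi_s$ decreases on this slab.

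\textbf{Step 3: Select an almost-minimizing member of $\mathcal{A}$ and push it down.} By the definition of the infimum, I can choose $A\in\mathcal{A}$ with $\sup_A E\le \kappa+\eps$, i.e.\ $A\subset \Lambda^{\le \kappa+\eps}$. The flow-family property guarantees that $\phi_{s_0}A\in\mathcal{A}$, while Step~2 gives $\phi_{s_0}A\subset\Lambda^{\le \kappa-\eps}$. Hence
\[
\sup_{\phi_{s_0}A} E\le \kappa-\eps<\kappa=\inf_{A'\in\mathcal{A}}\sup_{A'} E,
\]
which is a contradiction. Therefore $\kappa$ must be a critical value.

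\textbf{Main obstacle.} The one subtle point is the uniform deformation estimate in Step~2: on an infinite-dimensional Hilbert manifold, a regular value does not automatically come with a positive lower bound on $\|\nabla E\|_1$ over the slab, and one has to invoke condition~(C) as stated in Theorem~\ref{thm:PS} to rule out a sequence $(\gamma_m)$ in $E^{-1}([\kappa-\eps,\kappa+\eps])$ with $\|\nabla E(\gamma_m)\|_1\to 0$. This is precisely the role played by the Palais--Smale condition and is what makes the argument, modeled on the finite-dimensional minimax principle, go through in the present setting. Everything else is formal manipulation of the flow-family axioms.
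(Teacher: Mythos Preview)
Your proof is correct and follows essentially the same route as the paper's: argue by contradiction, use compactness of $\mathrm{Crit}(E)\cap\Lambda^{\le \kappa+1}$ (property~\eqref{item:i}) to secure an interval $[\kappa-\eps,\kappa+\eps]$ of regular values, pick $A\in\mathcal A$ with $A\subset\Lambda^{\le \kappa+\eps}$, and invoke property~\eqref{item:iii} together with the flow-family axiom to push $A$ below $\kappa-\eps$. The only point the paper makes explicit that you leave implicit is that $\kappa<\infty$, which follows immediately from the first flow-family axiom and the nonemptiness of $\mathcal A$.
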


We call the value $\kappa_{\mathcal A}$ the \emph{minimax value of $E$ over $\mathcal A$}.

\begin{proof}
By definition $\kappa:=\kappa_{\mathcal A}<\infty$. Assume by contradiction that $\kappa$ is a regular value. By compactness of $\mathrm{Crit}(E)\cap \Lambda^{\le (\kappa+1)}$ (property~(i) on page~\pageref{item:i}), we infer the existence of some $\eps>0$ such that $[\kappa-\eps,\kappa+\eps]$ consists of regular values. By definition of $\kappa$ there exists $A\in\mathcal A$ such that $A\subset \Lambda^{\le (\kappa+\eps)}$. But in this case $\phi_s(A)\subset \Lambda^{\le (\kappa-\eps)}$ for $s$ large enough (property~(iii) on page~\pageref{item:iii}), and at the same time $\phi_s(A)\in\mathcal A$, which contradicts the definition of $\kappa$.
\end{proof}

\smallskip 

\noindent {\bf Exercise.} Convince yourself that Proposition~\ref{prop:minimax} was already used in disguise in the proof of Theorem~\ref{thm:Cartan}. 

\smallskip

\noindent \emph{Proof of the theorem of Lyusternik and Fet}. The proof can be summarized as being an application of the minimax procedure over a suitable nontrivial homotopy class. 

The canonical fibration $\Omega M\hookrightarrow \Lambda M\stackrel {\textrm{ev}} \longrightarrow M$  admits a section $M\hookrightarrow \Lambda M$ given by the inclusion of constant loops, and this implies that the associated homotopy long exact sequence reduces to split short exact sequences $0\to \pi_k\Omega M\to \pi_k\Lambda M\to \pi_k M\to 0$. We thus have canonical isomorphisms
$$
\pi_k\Lambda M\simeq \pi_k\Omega M\rtimes \pi_k M \simeq \pi_{k+1}M \rtimes \pi_k M, \qquad k\ge 1
$$
between the $k$-th homotopy group of $\Lambda M$ and the semi-direct product of the $k$-th homotopy group of $\Omega M$ by the $k$-th homotopy group of $M$.

Pick now the smallest $k\ge 1$ such that $\pi_{k+1}M=H_{k+1}(M;\mathbb{Z})\neq 0$, so that $\pi_k\Lambda M\simeq \pi_{k+1}M\neq 0$. Pick as flow-family $\mathcal A$ the set of images $f:S^k\to \Lambda M$ in a fixed nontrivial homotopy class. We claim that the associated minimax value $\kappa_{\mathcal A}$ is strictly positive. Indeed, if there existed $f$ such that $f(S^k)\subset \Lambda^{\le \varepsilon}$ for $\varepsilon>0$ arbitrarily small, by property~\eqref{item:iv} on page~\pageref{item:iv} this same $f$ would provide a nontrivial element in $\pi_k\Lambda^0=\pi_k M$, and this would in turn contradict our choice of $k\ge 1$. This proves the claim, and also the theorem, since a closed geodesic on the non-zero critical level $\kappa_{\mathcal A}$ must necessarily be non-constant. 
\hfill{$\square$}

\begin{remark}
The idea of minimization that underlies the proof of both of the above theorems can be visualized -- and rephrased -- dynamically as follows. In Theorem~\ref{thm:Cartan}, if one starts with a generic \emph{point} in the component $\Lambda_\alpha$ and lets it flow by the negative gradient, one will eventually converge to a minimum of the energy in the class $\alpha$, and that minimum will necessarily be a nonconstant geodesic. In the context of Theorem~\ref{thm:LS}, this same procedure will only produce a constant geodesic. However, if one starts with a suitable nontrivial \emph{cycle} inside $\Lambda M$ and lets it flow by the negative gradient, that cycle will eventually get ``hooked up'' at some critical point. Otherwise, the corresponding homology class would come from the homology of $M$, which is extremely poor in comparison with the homology of the free loop space. This phenomenon of ``homological abundance'' for the free loop space is exploited in a striking way in the Gromoll-Meyer theorem, see~\S\ref{sec:GM}.  
\end{remark}

The theorem of Lyusternik and Fet constituted a historical breakthrough in the calculus of variations. A similar breakthrough was represented by the following more delicate theorem of Lyusternik and Schnirelmann.

\begin{theorem}[Lyusternik-Schnirelmann~\cite{LS1,LS2},~\cite{Ballmann1978},~{\cite[Theorem~A.3.1]{Klingenberg-book}}]
On the $2$-sphere endowed with an arbitrary Riemannian metric there exist at least three distinct simple closed geodesics.  
\end{theorem}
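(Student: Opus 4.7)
The approach cannot apply the minimax scheme of Theorem~\ref{thm:LS} directly to the energy functional on $\Lambda S^2$: any minimax value so produced is only guaranteed to come from a closed geodesic, which could be a multiply-covered simple geodesic rather than a simple one. The plan is instead to minimax the length functional $L$ over families of \emph{simple} (embedded) closed curves, so that in good cases the minimax is attained by a simple closed curve that is a critical point of $L$, hence a simple closed geodesic.

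First I would set up the configuration space. Let $\mathcal{S}$ denote the space of unoriented, unparameterized simple closed curves on $S^2$, completed by the point-curves (constants). The topological input is the existence of three nontrivial subordinated cohomology classes on $\mathcal{S}$, of degrees $1$, $2$, and $3$, coming from three increasing-dimensional families of sweepouts of $S^2$ by simple closed curves (the $1$-parameter sweepout from pole to pole by parallels, a $2$-parameter extension obtained by rotating the axis, and a $3$-parameter extension involving the choice of axis itself). Applying the principle of subordinated classes of Lyusternik-Schnirelmann -- which refines Proposition~\ref{prop:minimax} by producing one critical value per subordinated class, all geometrically distinct when the critical set is discrete -- yields three nontrivial minimax values $0 < \kappa_1 \leq \kappa_2 \leq \kappa_3$ of $L$. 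If two of these coincide, the standard cup-length argument upgrades the critical set at that level to have nontrivial $\Z/2$-cohomology in positive degree, producing infinitely many critical points and, in particular, three geometrically distinct simple closed geodesics.

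The main obstacle is analytical rather than topological. The length functional $L$ does not admit a genuine gradient flow on $\mathcal{S}$: it is not even Fr\'echet differentiable at curves with $\dot \gamma = 0$, and more importantly any naive length-decreasing deformation threatens to destroy simplicity -- either by creating self-intersections or by pinching a simple curve through a figure-eight into two shorter loops. The Palais-Smale machinery of Proposition~\ref{prop:minimax} therefore does not transplant directly. The resolution, going back to Lyusternik-Schnirelmann and clarified by Ballmann~\cite{Ballmann1978}, replaces the formal gradient flow by an explicit curve-shortening procedure on piecewise geodesic approximations (in the spirit of Birkhoff), together with a combinatorial ``cut and discard'' step: whenever a deformed curve develops a self-intersection one removes it and retains only the longer of the resulting simple loops, checking that $L$ strictly decreases. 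Verifying that this discrete deformation satisfies the analogue of property~(iii) on page~\pageref{item:iii}, and hence that the minimax values $\kappa_1,\kappa_2,\kappa_3$ are effectively realized by \emph{simple} closed geodesics rather than by degenerate configurations at the boundary of $\mathcal{S}$, is the technically delicate heart of the argument.
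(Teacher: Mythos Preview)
The paper does not actually prove this theorem. Immediately after the statement it says: ``We do not give details for the proof of this theorem here, but explain instead the `principle of subordinated classes'\,\ldots\ which is the new conceptual tool that Lyusternik and Schnirelmann introduced.'' The paper then proves only the abstract Lyusternik--Schnirelmann inequality $c(\beta,f)\le c(\alpha,f)$ for subordinated classes $\alpha=\beta\cup\gamma$, and leaves the three-geodesics theorem itself to the cited references~\cite{LS1,LS2,Ballmann1978,Klingenberg-book}.

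Your outline is the correct one and is precisely what those references carry out: work on the space of unparametrized simple closed curves modulo constants (which has the homotopy type of $\R P^3$ relative to a point), use the three subordinated $\Z/2$-cohomology classes coming from the three nested sweepouts, and replace the nonexistent gradient flow by a Birkhoff-type curve-shortening process that preserves embeddedness. Your identification of the analytical difficulty --- that a naive shortening flow can destroy simplicity --- and of Ballmann's resolution is accurate; this is exactly why the paper cites~\cite{Ballmann1978} alongside the original. One minor correction: the cut-and-discard step you describe is not quite how Ballmann proceeds; he works with a careful piecewise-geodesic replacement that he shows cannot create self-intersections in the first place, rather than surgering them away after the fact. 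But the overall architecture of your sketch matches the standard proof, and goes well beyond what the paper itself supplies.
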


We do not give details for the proof of this theorem here, but explain instead the ``principle of subordinated classes"~\cite[p.~343]{Bott-oldnew}, which is the new conceptual tool that Lyusternik and Schnirelmann  introduced. This allows to detect \emph{distinct} minimax values using the multiplicative structure of the cohomology of a manifold. (In the  proof of the Lyusternik-Fet theorem we used a dual homological construction.) 

Let $N$ be a Hilbert manifold and $f:N\to\R$ a $C^2$ function which is bounded from below and which satisfies the Palais-Smale condition (C) on page~\pageref{conditionC}. Denote $N^a:=\{f\le a\}$ and $j_a:N^a\hookrightarrow N$ the inclusion. Given $\alpha\in H^*(N)\setminus \{0\}$, define 
$$
c(\alpha,f):=\inf \left\{a \ : \ j_a^*\alpha\neq 0\in H^*(N^a)\right\}.
$$

\noindent {\bf Exercise.} Prove that $c(\alpha,f)$ is a critical value of $f$ (the proof mimicks that of Proposition~\ref{prop:minimax} above).

The real number $c(\alpha,f)$ is commonly referred to as \emph{the spectral value of $f$ relative to the class $\alpha$}. 

\begin{theorem}[Lyusternik-Schnirelmann 1934] Given $f:N\to\R$ as above, let $\alpha,\beta\in H^*(N)\setminus \{0\}$ be such that 
$$
\alpha=\beta\cup \gamma
$$ 
for some $\gamma$ of degree $\ge 1$. Then 
$$
c(\beta,f)\le c(\alpha,f),
$$ 
and inequality is strict if $f$ has isolated critical points. (The classes $\alpha$ and $\beta$ are called \emph{subordinated}.)
\end{theorem}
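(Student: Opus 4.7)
For any $a$ with $j_a^*\alpha\neq 0$, naturality of the cup product gives $j_a^*\alpha=j_a^*\beta\cup j_a^*\gamma$, so $j_a^*\beta\neq 0$. Hence the set over which the infimum defining $c(\beta,f)$ is taken contains that for $c(\alpha,f)$, giving $c(\beta,f)\le c(\alpha,f)$. This part is formal and needs no dynamics.

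\textbf{Strict inequality: set-up.} Assume for contradiction that $c:=c(\alpha,f)=c(\beta,f)$. Since the critical points of $f$ are isolated and $\mathrm{Crit}(f)\cap f^{-1}(c)$ is compact by Palais-Smale (applied to a sequence of critical points of fixed value $c$), this critical set at level $c$ is finite, say $K=\{p_1,\dots,p_m\}$. Pick pairwise disjoint contractible open balls $V_i\ni p_i$ containing no other critical points and set $V=V_1\sqcup\cdots\sqcup V_m$, so that $H^{\ge 1}(V)=0$. The standard deformation lemma, which uses condition (C) to uniformly bound $\|\nabla f\|$ from below on $N^{c+\epsilon'}\setminus V$ and modifies the negative gradient flow near $V$, provides $\epsilon,\epsilon'>0$ and a continuous self-map of $N$ which fixes $N^{c-\epsilon}$ pointwise and sends $N^{c+\epsilon'}$ into $N^{c-\epsilon}\cup V$. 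In particular the inclusion $N^{c-\epsilon}\cup V\hookrightarrow N^{c+\epsilon'}$ is a deformation retract, hence
$$
H^*\bigl(N^{c+\epsilon'},\,N^{c-\epsilon}\cup V\bigr)=0.
$$

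\textbf{The cup-product contradiction.} Because $c(\beta,f)=c$, for every $a<c$ one has $j_a^*\beta=0$; applying this with $a=c-\epsilon$ and using the long exact sequence of the pair $(N^{c+\epsilon'},N^{c-\epsilon})$, the class $j_{c+\epsilon'}^*\beta$ lifts to some $\tilde\beta\in H^*(N^{c+\epsilon'},N^{c-\epsilon})$. Similarly, since $\deg\gamma\ge 1$ and $V$ is a disjoint union of contractibles, $j_{c+\epsilon'}^*\gamma|_V=0$, so $j_{c+\epsilon'}^*\gamma$ lifts to $\tilde\gamma\in H^*(N^{c+\epsilon'},V)$. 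Naturality of the relative cup product yields a pairing
$$
H^*\bigl(N^{c+\epsilon'},N^{c-\epsilon}\bigr)\otimes H^*\bigl(N^{c+\epsilon'},V\bigr)\longrightarrow H^*\bigl(N^{c+\epsilon'},N^{c-\epsilon}\cup V\bigr)=0,
$$
and the image in $H^*(N^{c+\epsilon'})$ of $\tilde\beta\cup\tilde\gamma$ equals $j_{c+\epsilon'}^*\beta\cup j_{c+\epsilon'}^*\gamma=j_{c+\epsilon'}^*\alpha$. Therefore $j_{c+\epsilon'}^*\alpha=0$, contradicting $c+\epsilon'>c=c(\alpha,f)$.

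\textbf{Main obstacle.} The purely algebraic-topological step—lifting $\beta$ and $\gamma$ to relative classes and multiplying them into $H^*(N^{c+\epsilon'},N^{c-\epsilon}\cup V)$—is formal. The delicate point is the deformation lemma with a \emph{prescribed} neighborhood $V$ of the critical set: one must construct a flow-like deformation that is the identity on $N^{c-\epsilon}$, traps $V$, and drives the complement $N^{c+\epsilon'}\setminus V$ below the level $c-\epsilon$ in finite time. This is where Palais-Smale (C) is essential, as it ensures a uniform lower bound $\|\nabla E\|\ge \delta>0$ on $N^{c+\epsilon'}\setminus V$ for $\epsilon'$ sufficiently small, thereby controlling the flow time needed.
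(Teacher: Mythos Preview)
Your proof is correct and follows essentially the same approach as the paper: lift $\beta$ to a class relative to a sublevel set, lift $\gamma$ to a class relative to a contractible neighborhood of the finitely many critical points at level $c$, and use the relative cup product together with the deformation lemma to conclude that $\alpha$ dies on $N^{c+\epsilon'}$. The only cosmetic differences are that the paper works in $H^*(N,\cdot)$ rather than $H^*(N^{c+\epsilon'},\cdot)$, uses thickenings $W$ of the unstable manifolds rather than coordinate balls $V$, and argues directly that $j_{c-\epsilon}^*\beta\neq 0$ rather than by contradiction from $c(\beta,f)=c(\alpha,f)$.
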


\begin{proof}
The inequality $c(\beta,f)\le c(\alpha,f)$ is obvious from the definition: if $\alpha$ is nonzero in $N^a$, then $\beta$ must also be nonzero in $N^a$. 

Assume now that $f$ has isolated critical points. This implies that critical values are isolated and there exist only finitely many critical points on each critical level. Let $\eps>0$ be such that $[c-\eps,c+\eps]$ contains $c=c(\alpha,f)$ as a unique critical value. We claim that $\beta$ is nonzero in $N^{c-\eps}$. Assuming the opposite, we infer from the exact sequence $H^*(N,N^{c-\eps})\to H^*(N)\to H^*(N^{c-\eps})$ that $\beta$ has a representative coming from $H^*(N,N^{c-\eps})$. On the other hand, denoting by $W$ a thickening of the unstable manifolds of the critical points $p_i$ on the level $f^{-1}(c)$, we have that $W$ is a finite union of contractible sets (One cannot guarantee that these sets are discs, unless $f$ is Morse, yet they are clearly contractible onto the points $p_i$). Since the degree of $\gamma$ is $\ge 1$, we have that $\gamma=0$ in $H^*(W)$, so that it has a representative coming from $H^*(N,W)$. Thus $\beta\cup\gamma$ has a representative coming from $H^*(N,N^{c-\eps}\cup W)$. The final observation is that $N^{c-\eps}\cup W$ is a retract of $N^{c+\eps}$, which implies that $\alpha=\beta\cup\gamma$ is zero in $H^*(N^{c+\eps})$, a contradiction.   
\end{proof}

\noindent {\bf Remark.} The above proof makes use of the fact that, upon crossing an isolated critical value $c$, the topology of the sublevel set $N^{c+\eps}$ retracts onto the union of $N^{c-\eps}$ with some collection of disjoint contractible sets obtained by thickening the unstable manifolds of the critical points on the level $c$. This is a generalization of~\eqref{item:crossing} in Theorem~\ref{thm:Morse}.

\section{The Gromoll-Meyer theorem and Bott's iteration formulas} \label{sec:GM} 

In this section we present the ideas involved in the proof of the celebrated theorem of Gromoll and Meyer~\cite{Gromoll-Meyer-JDG1969}.
Berger~\cite{Berger} wrote a clear account of it which can serve as a guide to the beautiful original paper.

\begin{theorem}[Gromoll-Meyer~\cite{Gromoll-Meyer-JDG1969}] \label{thm:GM}
Let $M$ be a simply connected closed manifold.
If the sequence $\big\{b_k(\Lambda M;\mathbb{F}_p)\big\}_{k\ge 0}$ is unbounded for some prime $p\ge 2$, then any Riemannian metric on $M$ admits infinitely many prime closed geodesics. 
\end{theorem}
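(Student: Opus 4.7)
The plan is to argue by contradiction. Suppose there are only finitely many prime closed geodesics $c_1,\dots,c_N$ for the given metric. Then the critical set of $E$ above zero level is, modulo the $O(2)$-action, the union of the iterates $c_i^m$, $i=1,\dots,N$, $m\ge 1$. Extending Theorem~\ref{thm:Morse}\eqref{item:crossing} to the possibly degenerate setting via local critical groups
$$
C_k(c_i^m):=H_k(\Lambda^{\le c_i^m},\Lambda^{<c_i^m})\text{ computed near }O(2)\cdot c_i^m,
$$
the standard Morse inequality reads
$$
b_k(\Lambda M;\mathbb{F}_p)\le b_k(M;\mathbb{F}_p)+\sum_{i=1}^{N}\sum_{m\ge 1}\mathrm{rk}\,C_k(c_i^m).
$$
The goal is to bound the right-hand side uniformly in $k$, contradicting the unboundedness hypothesis on the Betti numbers.

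Two quantitative inputs drive the estimate. The first is Bott's iteration formula: for each prime $c_i$ there is an \emph{average index} $\alpha_i\ge 0$ with $|\lambda(c_i^m)-m\alpha_i|\le n-1$, and the same linear control holds for $\lambda(c_i^m)+\nu(c_i^m)$. Since $C_\cdot(c_i^m)$ is concentrated in the degree window $[\lambda(c_i^m),\,\lambda(c_i^m)+\nu(c_i^m)+1]$, its support lies in an interval of bounded length centred near $m\alpha_i$. When $\alpha_i>0$, only $O(1)$ values of $m$ give a window that meets a fixed $k$; when $\alpha_i=0$, all such windows already lie in a bounded range of degrees. In both cases, for each $i$ the tower $\{c_i^m\}_{m\ge 1}$ contributes to each $b_k$ through a uniformly bounded number of iterates.

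The second input is the \emph{Gromoll--Meyer shifting theorem}: a uniform bound $\mathrm{rk}\,C_\cdot(c_i^m)\le K_i$ independent of $m$. Its proof reduces $E$ near $O(2)\cdot c_i^m$ to a finite-dimensional ``Morse germ'' by splitting off the non-degenerate directions of the Hessian and applying an equivariant Morse lemma; the resulting local model is controlled by the conjugacy class of the linearised Poincar\'e return map of $c_i^m$, and only finitely many such classes arise as $m$ varies. Combining the two inputs yields a constant $C=C(c_1,\dots,c_N)$ with $b_k(\Lambda M;\mathbb{F}_p)\le C$ for all $k$, contradicting the hypothesis and completing the proof.

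The main obstacle is precisely this uniform bound on local critical groups. One cannot reduce to the bumpy (Morse--Bott) case by perturbing the metric, since such a perturbation generically alters the set of prime closed geodesics; one must therefore handle arbitrarily degenerate iterates $c_i^m$ directly and show that iteration does not cause uncontrolled degeneration of the transverse Hessian. All of the subtle analysis of Gromoll and Meyer -- splitting lemma with parameters, periodicity of the local model under iteration, behaviour of the null-space under $m\mapsto c_i^m$ -- is devoted to establishing precisely this step.
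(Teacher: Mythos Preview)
Your proposal is correct and follows exactly the strategy outlined in the paper: argue by contradiction, use Bott's iteration formulae (the paper's Lemmas~\ref{lem:1} and~\ref{lem:2}) to control the degree windows $[\lambda(\gamma_m),\lambda(\gamma_m)+\nu(\gamma_m)]$, and invoke the Gromoll--Meyer local analysis to bound the contribution of each iterate to the homology uniformly in that window. Your formulation via local critical groups, average index, and the shifting theorem is the standard modern packaging of the same argument; the paper phrases the key local estimate as ``a closed geodesic contributes at most $\nu(\gamma)$ in degrees $[\lambda(\gamma),\lambda(\gamma)+\nu(\gamma)]$'', which is precisely your concentration-plus-rank statement.
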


\noindent \emph{Comments on the assumptions.} 1. Vigu\'e-Poirrier and Sullivan~\cite{Vigue-Poirrier-Sullivan}  proved that, for a simply connected closed manifold $M$, the sequence of rational Betti numbers $b_k(\Lambda M;\mathbb{Q})=\mathrm{rk}\, H^k(\Lambda M;\mathbb{Q})$ is unbounded if and only if $H^*(M;\mathbb{Q})$ needs at least two generators as a ring, i.e. $H^*(M;\mathbb{Q})$ is not isomorphic to a truncated polynomial ring. 
Typical examples of closed manifolds whose cohomology ring with rational coefficients is generated by a single element are compact globally symmetric spaces of rank $1$, but there are also some simply connected compact globally symmetric spaces of rank $>1$ that satisfy this property (see Ziller~\cite[p.~17]{Ziller}). Ziller proved that all the compact globally symmetric spaces of rank $>1$ are such that the sequence of Betti numbers of their free loop space is unbounded with coefficients in $\mathbb{F}_2$. 

2. A fundamental aspect of the Gromoll-Meyer theorem is that it treats arbitrary Riemannian metrics, not only the generic case of metrics all of whose closed geodesics are non-degenerate (\emph{bumpy metrics}).

There are two obstacles for the proof. The first obstacle is that the iterates $\gamma_m:S^1\to M$, $\gamma_m(t)=\gamma(mt)$, $m\in \Z$ of a closed geodesic $\gamma$ are again closed geodesics, and hence contribute to the homology of the free loop space $\Lambda M$. The second obstacle is that we want to deal with degenerate closed geodesics, for which Morse-Bott theory does not apply as such. 

Gromoll and Meyer succeed in overcoming both obstacles by showing in~\cite{Gromoll-Meyer-JDG1969,Gromoll-Meyer-Topology1969} that a (degenerate) closed geodesic contributes to the homology of the free loop space by at most $\nu(\gamma)$ in a range of degrees contained in the interval $[\lambda(\gamma),\lambda(\gamma)+\nu(\gamma)]$. The theorem then follows from the following two statements, which enable one to control the behavior of the index and nullity under iteration. 

\begin{lemma}[Index iteration~\cite{Gromoll-Meyer-JDG1969}] \label{lem:1}
Either $\lambda(\gamma_m)=0$ for all $m\ge 1$ or there exist $\varepsilon>0$ and $C>0$ such that for all $m,s\ge 1$ we have 
$$
\lambda(\gamma_{m+s})-\lambda(\gamma_m)\ge \varepsilon s - C.
$$
\end{lemma}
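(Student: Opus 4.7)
The plan is to reduce the lemma to the existence of a well-defined mean index $\hat\lambda(\gamma)\ge 0$ and a uniform bound on the deviation of $\lambda(\gamma_m)$ from $m\hat\lambda(\gamma)$. The key input is Bott's iteration formula: there exists a bounded step function $\Lambda:S^{1}\to \Z_{\ge 0}$, with finitely many jump points determined by the eigenvalues on the unit circle of the linearized Poincar\'e return map of $\gamma$, such that
\[
\lambda(\gamma_m)\;=\;\sum_{z^m=1}\Lambda(z), \qquad m\ge 1.
\]
This formula, established in \S\ref{sec:GM} (and classically due to Bott), turns the index iteration problem into an equidistribution problem for $m$-th roots of unity against the function $\Lambda$.

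First I would define the \emph{mean index}
\[
\hat\lambda(\gamma)\;:=\;\int_{S^{1}}\Lambda(z)\,d\mu(z),
\]
where $\mu$ is the normalized Haar measure on $S^{1}$. Because $\Lambda\ge 0$ takes values in $\Z$, one has $\hat\lambda(\gamma)\ge 0$. Next I would prove the uniform comparison
\[
\bigl|\lambda(\gamma_m)-m\,\hat\lambda(\gamma)\bigr|\;\le\;K,\qquad m\ge 1,
\]
for a constant $K$ depending only on $\Lambda$. This is the standard Riemann-sum estimate for step functions: the uniformly distributed points $\{z: z^m=1\}$ approximate the integral of $\Lambda$ with an error controlled by the total jump $\sum |\Lambda(z^+)-\Lambda(z^-)|$ at the (finitely many) discontinuities of $\Lambda$, which is independent of $m$.

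With these two ingredients I would split the analysis according to the sign of $\hat\lambda(\gamma)$. If $\hat\lambda(\gamma)=0$, then the uniform bound above gives $\lambda(\gamma_m)\le K$ for all $m$. Combined with the non-negativity of $\Lambda$ and the formula $\lambda(\gamma_m)=\sum_{z^m=1}\Lambda(z)$, one argues (taking $m$ a large multiple of the period of the finitely many discontinuities of $\Lambda$) that every non-zero value of $\Lambda$ would force $\lambda(\gamma_m)$ to grow linearly in $m$; hence $\Lambda\equiv 0$ and $\lambda(\gamma_m)=0$ for every $m\ge 1$, landing in the first alternative. Otherwise $\hat\lambda(\gamma)>0$, and by the uniform bound one has, for all $m,s\ge 1$,
\[
\lambda(\gamma_{m+s})-\lambda(\gamma_m)\;=\;s\,\hat\lambda(\gamma)+\bigl(\lambda(\gamma_{m+s})-(m+s)\hat\lambda(\gamma)\bigr)-\bigl(\lambda(\gamma_m)-m\hat\lambda(\gamma)\bigr)\;\ge\; \hat\lambda(\gamma)\, s-2K.
\]
Setting $\varepsilon:=\hat\lambda(\gamma)>0$ and $C:=2K$ yields the desired inequality.

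The main obstacle is not the arithmetic at the end but the honest construction of Bott's function $\Lambda$ with integer values, non-negativity, and the iteration identity: one must carefully track Krein signs of unit eigenvalues of the linearized Poincar\'e map, distinguish contributions of elliptic, hyperbolic, and loxodromic blocks, and verify the sum-over-roots-of-unity formula. The passage $\hat\lambda(\gamma)=0\Rightarrow \Lambda\equiv 0$ is also delicate and is where one genuinely uses the integrality of $\Lambda$; otherwise one could only conclude vanishing almost everywhere, which is not enough. Once Bott's formalism is in place, the dichotomy above is essentially automatic.
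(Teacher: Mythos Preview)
Your approach is exactly the one the paper has in mind: it states that Lemmas~\ref{lem:1} and~\ref{lem:2} follow ``by a straightforward computation'' from Bott's formula $\lambda(\gamma_m)=\sum_{z^m=1}\Lambda(z)$ and the properties of $\Lambda$ in Theorem~\ref{thm:Bott}, and your mean-index argument is precisely that computation.

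One point to tighten: in the case $\hat\lambda(\gamma)=0$ you invoke ``integrality of $\Lambda$'' and ``a large multiple of the period of the discontinuities'' to force $\Lambda\equiv 0$, but neither is the right lever. From $\int_{S^1}\Lambda=0$ and $\Lambda\ge 0$ you get $\Lambda=0$ on every open arc between Poincar\'e points (since $\Lambda$ is locally constant there by Theorem~\ref{thm:Bott}\eqref{item:Bott-3}); what kills the possible residual values at the finitely many Poincar\'e points is the \emph{lower semi-continuity} of $\Lambda$ (Theorem~\ref{thm:Bott}(iv)), which gives $\Lambda(z_0)\le\lim_{z\to z_0}\Lambda(z)=0$. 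Integrality alone would not rule out a positive value at an isolated Poincar\'e point, and such points need not be roots of unity, so there is no ``period'' to take multiples of. With this correction your dichotomy $\hat\lambda=0$ vs.\ $\hat\lambda>0$ goes through cleanly.
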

\begin{lemma}[Nullity iteration~\cite{Gromoll-Meyer-JDG1969}] \label{lem:2}
Either $\nu(\gamma_m)=1$ for all $m\ge 1$ or there exist finitely many iterates $\gamma_{m_1},\dots\gamma_{m_s}$ of $\gamma$ such that for all $m\ge 1$ there exist $i$ and $k$ such that $m=km_i$ and $\nu(\gamma_m)=\nu(\gamma_{m_i})$. In particular the sequence $\nu(\gamma_m)$, $m\ge 1$ takes only finitely many values. 
\end{lemma}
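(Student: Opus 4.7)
The plan is to reduce the statement to a question about the spectrum of a single symplectic matrix, namely the linearized Poincar\'e return map $P$ of the underlying prime geodesic. Recall that $P$ is a symplectic automorphism of a $2(n-1)$-dimensional symplectic vector space (the symplectic normal to the flow direction inside a level set of the kinetic energy on $TM$), and that the linearized Poincar\'e return map of the iterate $\gamma_m$ is precisely $P^m$. The first step, which is the starting point of Bott's iteration analysis, is to establish the identity
\[
\nu(\gamma_m) \;=\; 1 + \dim_{\R}\ker(P^m - I),
\]
the extra ``$+1$'' accounting for the canonical $1$-periodic Jacobi field $\dot\gamma_m$ tangent to the orbit. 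I would derive this by rewriting the Jacobi equation~\eqref{eq:Jacobi} along $\gamma_m$ as a $1$-periodic linear Hamiltonian system on the symplectic normal bundle, so that $1$-periodic Jacobi fields modulo the canonical direction correspond exactly to fixed points of $P^m$.

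The second step is pure linear algebra. Passing to the Jordan decomposition of the complexification of $P$, a direct calculation on each Jordan block shows that a block with eigenvalue $\lambda$ contributes $1$ to $\dim\ker(P^m - I)$ when $\lambda^m = 1$ and $0$ otherwise. Hence $\dim\ker(P^m - I)$ depends on $m$ only through the set of those root-of-unity eigenvalues of $P$ whose order divides $m$; in particular $\nu(\gamma_m)$ takes only finitely many values.

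Now the dichotomy of the lemma appears naturally. If no eigenvalue of $P$ is a root of unity, then $\lambda^m \ne 1$ for every eigenvalue $\lambda$ and every $m \ge 1$, so $\nu(\gamma_m) = 1$ for all $m$, which is the first alternative. Otherwise, enumerate the root-of-unity eigenvalues of $P$ (one for each relevant Jordan block) as $\lambda_1, \dots, \lambda_r$ with respective orders $q_1, \dots, q_r$. For each $S \subseteq \{1,\dots,r\}$ set $m(S) := \mathrm{lcm}\{q_j : j \in S\}$, and for $m \ge 1$ set $S_m := \{j : q_j \mid m\}$. The map $m \mapsto S_m$ takes at most $2^r$ values, and one checks directly from the definitions that $m(S_m) \mid m$ and $S_{m(S_m)} = S_m$. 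By the second step this forces $\nu(\gamma_m) = \nu(\gamma_{m(S_m)})$. Taking $\{m_1, \dots, m_s\}$ to be the finite collection of distinct integers $m(S)$ that arise as $S$ ranges over subsets of the form $S_m$ gives the desired list: every $m$ is a multiple $m = k \cdot m_i$ of the corresponding $m_i = m(S_m)$, with matching nullities.

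The main obstacle is really the very first identification $\nu(\gamma_m) - 1 = \dim \ker(P^m - I)$: it requires unwinding the differential geometry of the Jacobi equation into a clean linear Hamiltonian system on the symplectic normal bundle to $\gamma$, while carefully accounting for the canonical tangential Jacobi field $\dot\gamma_m$ which produces the ``$+1$''. Once this geometric input is secured, the finiteness statement of the lemma follows formally from the spectrum of $P$ being a finite subset of $\C$.
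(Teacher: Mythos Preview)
Your proposal is correct and follows essentially the same route as the paper. The paper derives Bott's formula $\nu(\gamma_m)=1+\sum_{z^m=1}N(z)$ with $N(z)=\dim_\C\ker(P-z\mathrm{Id})$ supported on the finitely many Poincar\'e points (Theorem~\ref{thm:Bott}(ii)), and then simply remarks that Lemma~\ref{lem:2} follows ``by a straightforward computation''; your argument is exactly that computation, phrased directly in terms of $\ker(P^m-I)$ and made explicit via the lcm construction of the $m_i$.
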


These two lemmas follow from Bott's iteration formulas which we explain below. Granting these and the already mentioned fact that a closed geodesic contributes to the homology of the free loop space by at most $\nu(\gamma)$ in a range of degrees contained in the interval $[\lambda(\gamma),\lambda(\gamma)+\nu(\gamma)]$, one shows by a combinatorial argument that a bound on the number of simple closed geodesics implies a bound on the Betti numbers of $\Lambda M$. This contradiction proves Theorem~\ref{thm:GM}.

In the rest of this section we describe Bott's celebrated iteration of the index method~\cite{Bott-iteration}. Denote 
$$
A_\gamma:H^2(\gamma^*TM)\to L^2(\gamma^*TM), \qquad \xi\longmapsto -\xi''-R(\dot\gamma,\xi)\dot\gamma 
$$
the \emph{asymptotic operator} at a closed geodesic $\gamma$, so that $d^2E(\gamma)(\xi,\eta)=\langle A_\gamma\xi,\eta\rangle_{L^2}$. This is a self-adjoint elliptic operator, hence Fredholm of index $0$, and it has compact resolvent. Its spectrum is therefore discrete with no accumulation point except $\pm\infty$~\cite[p.~187]{Kato}. Moreover, by elliptic regularity the eigenvectors of $A_\gamma$ are smooth. 

Since the curvature tensor $R$ is bounded, the equation $-\xi''-R(\dot\gamma,\xi)\dot\gamma=-\lambda \xi$ has no nontrivial \emph{periodic} solution for $\lambda\gg 0$. Thus the spectrum of $A_\gamma$ is bounded from below (this reproves the finiteness of the index of $\gamma$). 
The index of $\gamma$ is 
$$
\lambda(\gamma)=\sum_{\mathrm{Spec}(A_\gamma)\ni\lambda<0}\mathrm{mult}(\lambda) <\infty,
$$
with $\mathrm{mult}(\lambda)$ the multiplicity of the eigenvalue $\lambda$. The nullity of $\gamma$ is 
$$
\nu(\gamma)=\dim\ker A_\gamma.
$$
We have $1\le \nu(\gamma)\le 2n-1$. The first inequality follows from the fact that $\dot\gamma\in\ker\,  A_\gamma$. The second inequality follows as in~\S\ref{sec:Morse} since an element in the kernel of $A_\gamma$ solves a second order ODE. 

Denote by $\mathcal{V}_\gamma$ the space of smooth vector fields along $\gamma$, not necessarily periodic, and define the operator 
$$
\mathcal{L}:\mathcal{V}_\gamma\to\mathcal{V}_\gamma,\qquad \xi\longmapsto -\xi''-R(\dot\gamma,\xi)\dot\gamma.
$$
Given an integer $m\ge 1$ and a complex number $\mu\in\C$, we consider the following two boundary value problems.
\renewcommand{\theenumi}{\roman{enumi}}
\begin{enumerate}
\item $$ \mathcal{L}\xi=\mu\xi,\qquad \xi(t+m)=\xi(t), \qquad \xi'(t+m)=\xi'(t).$$
We denote $\Theta_m(\mu)$ the dimension of the space of solutions, so that 
$$
\lambda(\gamma_m)=\sum_{\mu<0}\Theta_m(\mu),\qquad \nu(\gamma_m)=\Theta_m(0).
$$
\item We complexify the operator $\mathcal{L}:\mathcal{V}_\gamma\to\mathcal{V}_\gamma$ to $L:V_\gamma=\mathcal{V}_\gamma\otimes_{\mathbb{R}}\mathbb{C}\to V_\gamma$ via the formula $L(\xi+i\eta):=\mathcal{L}\xi +i\mathcal{L}\eta$, and consider for $z\in S^1$ the problem 
$$
LY=\mu Y, \qquad Y(t+m)=zY(t), \qquad Y'(t+m)=zY'(m).
$$
We denote $\Theta_m^z(\mu)$ the dimension of the space of solutions. The fact that $\dot\gamma\in\ker\, A_\gamma$ translates into $\Theta_m^1(0)\ge 1$ for all $m\ge 1$.
\end{enumerate}
The beautiful observation of Bott~\cite{Bott-iteration} is that 
\begin{equation}\label{eq:Bott}
\Theta_m^z(\mu)=\sum_{w^m=z}\Theta_1^w(\mu),
\end{equation}
and in particular $\Theta_m(\mu)=\sum_{w^m=1}\Theta_1^w(\mu)$. Equation~\eqref{eq:Bott} follows directly from the fact that our second order ODE operator $\cL$ is time-independent. As a consequence of~\eqref{eq:Bott}, by setting 
 $$
\Lambda(z):=\sum_{\mu<0}\Theta_1^z(\mu), 
$$
$$
N(z):=\Theta_1^z(0)  \ \mbox{ for } \ z\neq 1,
$$
and
$$
N(1):= \Theta_1^1(0)-1,
$$
we have 
\begin{equation}\label{eq:LN}
\lambda(\gamma_m)=\sum_{z^m=1}\Lambda(z), \qquad \nu(\gamma_m)=1+\sum_{z^m=1}N(z).
\end{equation}

In order to understand the behavior of the index and of the nullity of a geodesic $\gamma$ under iteration one is therefore reduced to studying the two functions $\Lambda:S^1\to\N$ and $N:S^1\to\N$ associated to $\gamma$. Bott understood them and discovered that they are well-behaved. 

\begin{theorem}[properties of $\Lambda$ and $N$, cf. Bott~\cite{Bott-iteration}] \label{thm:Bott}
Let $\dim M=n$. 
\begin{enumerate}
\item $\Lambda(z)=\Lambda(\bar z)$, $N(z)=N(\bar z)$. 
\item \label{item:Bott-2} $N(z)=0$ except for at most $2n-2$ points, called \emph{Poincar\'e points}, which are the eigenvalues lying on the unit circle of a symplectic matrix of size $2n-2$ (the linearized Poincar\'e return map, cf. below).
\item \label{item:Bott-3} $\Lambda$ is locally constant except possibly at the Poincar\'e points. 
\item For any $z_0\in S^1$, the \emph{splitting numbers} $S^\pm(z_0)=\lim_{z\to z_0^\pm}\Lambda(z)-\Lambda(z_0)$ are nonnegative and bounded by $N(z_0)$.  In particular $\Lambda$ is lower semi-continuous. 
\end{enumerate}
\end{theorem}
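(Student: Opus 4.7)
The plan is to recast the twisted boundary value problem in terms of the Hamiltonian flow of the linearized geodesic equation, and then to study the resulting $S^1$-parametrized family via two complementary tools: symplectic linear algebra for the pointwise structure (giving (1) and (2)), and perturbation theory of self-adjoint elliptic operators for the dependence on $z$ (giving (3) and (4)). The main technical difficulty will be the continuity of eigenvalues in $z$ in the last step, because the natural domain $V_\gamma^z$ of the twisted operator varies with $z$.

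First I would rewrite $\cL\xi=\mu\xi$ as a first-order Hamiltonian system in $(\xi,\xi')$; its time-$1$ monodromy $\Phi_\mu$ is a linear symplectic automorphism of $T_{\gamma(0)}M\oplus T_{\gamma(0)}M$, and the twisted condition $Y(t+1)=zY(t)$, $Y'(t+1)=zY'(t)$ translates into the eigenvalue equation $\Phi_\mu v=zv$, so that $\Theta_1^z(\mu)=\dim\ker(\Phi_\mu-z\,\Id)$. For $\mu=0$, $\Phi_0$ preserves the symplectic plane $\Pi$ spanned by $(\dot\gamma(0),0)$ and the initial data $(0,\dot\gamma(0))$ of the non-periodic Jacobi field $t\mapsto t\dot\gamma(t)$ already discussed in~\S\ref{sec:Morse}, and acts parabolically on $\Pi$ with sole eigenvalue $1$ and a single Jordan block of size $2$; hence $\Phi_0|_\Pi$ contributes exactly $1$ to $\ker(\Phi_0-\Id)$ and $0$ to $\ker(\Phi_0-z\,\Id)$ for $z\ne 1$. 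The symplectic reduction of $\Phi_0$ by $\Pi$ is, by definition, the linearized Poincar\'e return map $P$, a symplectic automorphism of a $(2n-2)$-dimensional space. Combining these two observations gives $N(z)=\dim\ker(P-z\,\Id)$ for every $z\in S^1$, which simultaneously explains the unnatural convention $N(1):=\Theta_1^1(0)-1$ and yields (2), since a $(2n-2)\times(2n-2)$ matrix has at most $2n-2$ eigenvalues on $S^1$.

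For (1), the curvature tensor being real means that $L-\mu$ commutes with complex conjugation for $\mu\in\R$, and $Y\mapsto\overline{Y}$ sends a $z$-twisted solution to a $\bar z$-twisted solution. This antilinear involution is a real-linear isomorphism between the two solution spaces, so $\Theta_1^z(\mu)=\Theta_1^{\bar z}(\mu)$ for all real $\mu$, whence $\Lambda(z)=\Lambda(\bar z)$ and $N(z)=N(\bar z)$.

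For (3) and (4), fix $z_0=e^{2\pi i\theta_0}\in S^1$ and parametrize nearby points as $z=e^{2\pi i\theta}z_0$. The multiplication isomorphism $\xi\mapsto e^{2\pi i\theta t}\xi$ conjugates the $z$-twisted asymptotic operator to an operator $A(\theta)$ acting on the \emph{fixed} Hilbert space of $z_0$-twisted sections, and the resulting family is an analytic family of self-adjoint Fredholm operators with compact resolvent. By Kato's analytic perturbation theory, its eigenvalues admit continuous (and piecewise analytic) branches $\mu_i(\theta)$, and $\Lambda(z)$ (respectively $\Theta_1^z(0)$) equals the number of branches with strictly negative (respectively zero) value at $\theta$. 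A branch can cross $0$ only where $\ker A(\theta)\ne 0$, i.e. at a Poincar\'e point, which yields (3). At $z_0$ the splitting number $S^\pm(z_0)$ counts precisely the branches that vanish at $\theta=0$ and become strictly negative for $\theta\to 0^\pm$; since branches not vanishing at $z_0$ keep their sign for $\theta$ near $0$, we obtain $0\le S^\pm(z_0)\le N(z_0)$, and lower semi-continuity of $\Lambda$ follows immediately.
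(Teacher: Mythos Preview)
Your approach matches the paper's sketch exactly: (i) via the reality of $L$, (ii) via the identification $N(z)=\dim_\C\ker(P-z\,\Id)$ after splitting off the parabolic plane $\Pi$, and (iii)--(iv) via continuity of eigenvalues (the paper only says ``$\Lambda$ can change only if an eigenvalue crosses $0$'' and calls (iv) ``a general property of functions of geometric origin'', so your argument via the gauge transformation and Kato's theory is in fact considerably more detailed than what the paper offers).

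There is, however, one small gap in your treatment of (iii) and (iv) at the point $z_0=1$. Your bound ``$S^\pm(z_0)\le N(z_0)$'' comes from bounding by the number of eigenvalue branches of $A(\theta)$ vanishing at $\theta=0$, and that number is $\Theta_1^{z_0}(0)$. For $z_0\neq 1$ this equals $N(z_0)$, but for $z_0=1$ it equals $N(1)+1$ because of the ever-present kernel element $\dot\gamma$. Likewise in (iii): even when $1$ is not a Poincar\'e point (i.e.\ $N(1)=0$), you still have $\ker A(0)\neq 0$ at $z_0=1$, so your sentence ``$\ker A(\theta)\neq 0$, i.e.\ at a Poincar\'e point'' is not quite right there. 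The fix is immediate with your own formula $A(\theta)=L+(2\pi\theta)^2-4\pi i\theta D_t$: since $L\dot\gamma=0$ and $D_t\dot\gamma=0$, the vector $\dot\gamma$ is an \emph{exact} eigenvector of $A(\theta)$ for every $\theta$, with eigenvalue $(2\pi\theta)^2\ge 0$. Hence this branch touches zero only at $\theta=0$ and is strictly positive on either side; it never contributes to $S^\pm(1)$, and it does not cause $\Lambda$ to jump near $z=1$. Removing this branch leaves exactly $\Theta_1^1(0)-1=N(1)$ branches vanishing at $\theta=0$, and your argument then gives $S^\pm(1)\le N(1)$ and local constancy of $\Lambda$ near $1$ when $1$ is not a Poincar\'e point, as required.
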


The proof of Theorem~\ref{thm:Bott} uses in a crucial way the linearized Poincar\'e return map $P$ for the geodesic flow along the geodesic $\gamma$, which we now describe following~\cite{BTZ1982} (see also~\cite[Chapter~3]{Klingenberg-book}). Given a vector $v\in TM$, denote $\gamma_v$ the unique geodesic such that $\dot\gamma_v(0)=v$. The geodesic flow $\Phi_t:TM\to TM$ is given by $\Phi_tv:=\dot\gamma_v(t)$. Thus $\Phi_t$ maps $S_\rho TM:=\{v\inÊTM\, : \, \|v\|=\rho\}$ to itself. If $\gamma:S^1=\R/\Z\to M$ is a closed geodesic with $v=\dot\gamma(0)=\dot\gamma(1)$, then $\{\Phi_t v\}_{t\in [0,1]}$ is a periodic orbit of $\Phi$. The Poincar\'e map $\mathcal P$ along $\gamma$ is defined to be the return map of a local hypersurface $\Sigma\subset S_\rho TM$ transverse to the orbit $\Phi_tv$. The linearization $P:=D_v\mathcal P$ is independent of $\Sigma$ up to conjugacy and is called \emph{the linearized Poincar\'e map along $\gamma$}. We can choose $\Sigma$ such that $T_v\Sigma$ is $E\oplus E$, where $E$ is the orthogonal complement of $v$ in $T_pM$, $p=\gamma(0)$. Here $T_vTM$ is identified with $T_pM\oplus T_p M$ via the decomposition of $TTM$ into horizontal and vertical subspaces. The linearized Poincar\'e map $P:E\oplus E\to E\oplus E$ is then given by $(A,B)\longmapsto (Y(1),Y'(1))$, where $Y$ is the unique Jacobi field along $\gamma$ with initial conditions $(Y(0),Y'(0))=(A,B)$, solving $LY=0$. The linearized Poincar\'e map preserves the symplectic structure $\omega$ on $E\oplus E$ defined by $\omega((A_1,B_1),(A_2,B_2))=\langle A_1,B_2\rangle-\langle B_1,A_2\rangle$.

Bott's theorem is thus of a symplectic nature and, as such, it has been generalized to periodic orbits of autonomous Hamiltonian systems by Long~\cite{Long}. The counterpart of the Morse index is then the Maslov index. That the two coincide in the case of the geodesic flow was proved by Viterbo~\cite[Theorem~3.1]{Viterbo1990}. 

\begin{example} \label{example:hyperbolic}
Assume that $\gamma$ is a simple hyperbolic closed geodesic, meaning that the eigenvalues of the linearized Poincar\'e return map lie outside the unit circle. Then the function $N$ vanishes identically (Theorem~\ref{thm:Bott}(\ref{item:Bott-2})), the function $\Lambda$ is constant (Theorem~\ref{thm:Bott}(\ref{item:Bott-3})), and therefore $\lambda(\gamma_m)=\sum_{z^m=1}\Lambda(z)=m\Lambda(1)=m\lambda(\gamma)$. We have already encountered hyperbolic closed geodesics in Remark~\ref{rmk:invisible}.
\end{example}

\smallskip

\noindent \emph{Comments on Theorem~\ref{thm:Bott}.} Assertion (i) holds since $L$ is the complexification of a real operator. To prove (ii) one uses that $N(z)=\dim_{\mathbb{C}}\ker\, (P-z\mathrm{Id})$, i.e. the Poincar\'e points are the eigenvalues of $P$ of absolute value equal to $1$. In particular, there are at most $2n-2$ of them. (iii) is to be interpreted as the fact that $\Lambda$ can change only if an eigenvalue crosses $0$. (iv) describes a general property of functions of geometric origin. 

\smallskip

Bott's theorem, together with~\eqref{eq:LN}, implies by a straightforward computation the statements of Lemmas~\ref{lem:1} and~\ref{lem:2}~\cite{Gromoll-Meyer-JDG1969}.

\section{Infinitely many closed geodesics for generic metrics} \label{sec:equivariant}

As already mentioned in the introduction, it is still an open question whether any Riemannian metric on a given closed simply connected manifold $M$ admits infinitely many geometrically distinct prime closed geodesics. A summary of the current state-of-the-art as understood by the author is the following. 
\begin{itemize}
\item (cf.~\S\ref{sec:GM}) if the sequence of Betti numbers $b_k(\Lambda M;\F)$ with coefficients in some field $\F$ is unbounded, the answer is affirmative (Gromoll-Meyer). This is in particular true for manifolds whose rational cohomology ring requires at least two generators (Vigu\'e-Poirrier-Sullivan), and also for globally symmetric spaces of rank $>1$ (Ziller). 
\item (cf. the discussion in Appendix~\ref{app:S2} below) among the spaces which do not satisfy the assumptions of the Gromoll-Meyer theorem, the problem is solved in the affirmative for the $2$-sphere (Bangert, Franks, Angenent, Hingston). The solution uses in an essential way tools from $2$-dimensional dynamical systems. 
\item the problem is solved in the affirmative for a \emph{generic} metric (Klingenberg, Takens, Hingston, Rademacher), and we give in this section an overview of the relevant ideas.    
\end{itemize}

It is tantalizing that the problem of the existence of infinitely many closed geodesics on the spheres $S^n$, $n\ge 3$ remains open in full generality. This was the original problem that Morse addressed in his foundational essay~\cite{Morse}!

We now give an overview of the ideas involved in the proof of the following result, which is the combination of work of several mathematicians. 

\begin{theorem}[Klingenberg, Takens, Hingston, Rademacher, ...] \label{thm:generic}
Let $M$ be a simply connected smooth closed manifold. For a $C^4$-generic Riemannian metric there are infinitely many geometrically distinct prime closed geodesics on $M$. 
\end{theorem}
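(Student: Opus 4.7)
The plan is to combine three ingredients: the Gromoll--Meyer theorem of Section~\ref{sec:GM}, the Klingenberg--Takens bumpy metric theorem, and an equivariant Morse-theoretic counting argument based on Bott's iteration formulas. First, if the sequence $b_k(\Lambda M;\mathbb{F}_p)$ is unbounded for some prime $p$, we are done by Theorem~\ref{thm:GM} for \emph{every} metric. So one may assume those Betti numbers are bounded for all $p$. By the Vigu\'e-Poirrier--Sullivan theorem (and Ziller's extension ruling out the higher-rank symmetric case) this forces $H^*(M;\mathbb{Q})$ to be a truncated polynomial ring on one generator, so $M$ has the rational homotopy type of a CROSS: $S^n$, $\mathbb{C}P^n$, $\mathbb{H}P^n$, or the Cayley plane. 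Only this finite list of rational models needs to be treated.

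Next, I would invoke the Klingenberg--Takens bumpy metric theorem: in the $C^4$ topology, a generic metric is \emph{bumpy}, i.e.\ every closed geodesic satisfies $\nu(\gamma)=1$ and every $S^1$-orbit $S^1\cdot\gamma$ is Morse--Bott non-degenerate. Under this assumption the equivariant version of Theorem~\ref{thm:Morse} applies, and each iterate $S^1\cdot\gamma^m$ contributes at most one generator to the relative $S^1$-equivariant homology of the corresponding energy window, concentrated in a single degree determined by $\lambda(\gamma^m)$ together with the orientability of the negative bundle (cf.\ Remark~\ref{rmk:invisible}). Arguing by contradiction, I assume there are only finitely many prime closed geodesics $\gamma_1,\dots,\gamma_N$ and apply Bott's iteration (Theorem~\ref{thm:Bott} together with~\eqref{eq:LN}): each index sequence is quasi-linear,
\[
\lambda(\gamma_i^m)=m\,\bar\lambda_i+O(1),\qquad \bar\lambda_i=\int_{S^1}\Lambda_{\gamma_i}(z)\,dm(z)\ge 0.
\]
The degenerate alternative $\bar\lambda_i=0$ from Lemma~\ref{lem:1} corresponds to elliptic geodesics all of whose Poincar\'e eigenvalues are roots of unity; a Hingston-type argument, using the bumpiness of the metric and a further $C^4$-small perturbation if necessary, rules this out without creating new primes, so we may assume $\bar\lambda_i>0$ for every $i$.

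Granting $\bar\lambda_i>0$ for all $i$, the counting step bounds the equivariant Betti numbers of $(\Lambda M,M)$ linearly in $k$, namely by $\sum_{i=1}^N k/\bar\lambda_i + O(N)$. The main obstacle, and the step I expect to be the hardest, is to derive a contradiction by comparing this bound against a \emph{precise} global computation of the equivariant homology of $(\Lambda M,M)$ for the CROSS-like targets surviving Step~1. The cleanest tool here is Rademacher's resonance identity: for a bumpy metric with only finitely many primes, the weighted sum $\sum_i \chi_i/\bar\lambda_i$, where $\chi_i$ is a local contribution determined by the splitting numbers $S^\pm$ and the parity/orientation data of the iterates $\gamma_i^m$, must reproduce a globally computed equivariant Euler invariant of $(\Lambda M,M)$. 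Using the homogeneous models of Section~\ref{sec:completing} to compute the global invariant for the $M$ in our finite list, one verifies that it cannot be written as any such finite rational resonance sum. The delicate input --- and the reason the argument is not purely formal --- is the careful bookkeeping of the splitting numbers at the Poincar\'e points of Theorem~\ref{thm:Bott}, and the handling of homologically invisible iterates as in Remark~\ref{rmk:invisible}; it is precisely this refinement of the Gromoll--Meyer counting, due to Hingston and Rademacher, that allows the elementary bound above to be upgraded to an actual contradiction.
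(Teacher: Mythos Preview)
Your proposal diverges from the paper's argument at the very first genericity step, and this divergence creates a real gap later. The paper does \emph{not} invoke bumpiness. The Klingenberg--Takens result it uses is a \emph{dichotomy}: for a $C^4$-generic metric either (a) some closed geodesic is of \emph{twist type}, or (b) every closed geodesic is hyperbolic. Case~(a) is settled by dynamics, not Morse theory: Moser's smooth Birkhoff--Lewis fixed-point theorem produces infinitely many prime closed geodesics accumulating on the twist-type one. Only case~(b) is fed into the equivariant Morse-theoretic machinery, and the hypothesis ``all geodesics hyperbolic'' is precisely the hypothesis under which Rademacher's theorem (and Hingston's) is stated here. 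Your reduction to truncated-polynomial cohomology via Gromoll--Meyer and Vigu\'e-Poirrier--Sullivan is correct, but in the paper it sits inside case~(b), not before the dichotomy.

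Your attempt to bypass the dichotomy and run the resonance argument directly for bumpy metrics has a concrete gap in the $\bar\lambda_i=0$ step. The characterization is wrong: bumpiness already excludes Poincar\'e eigenvalues that are roots of unity (otherwise some iterate would be degenerate), so that clause is empty; conversely, $\bar\lambda_i=0$ is equivalent to $\Lambda_{\gamma_i}\equiv 0$, i.e.\ $\lambda(\gamma_i^m)=0$ for all $m$, which is perfectly compatible with a \emph{hyperbolic} geodesic of index $0$ (cf.\ Example~\ref{example:hyperbolic}). Such geodesics are robust, so the promised ``further $C^4$-small perturbation \dots\ without creating new primes'' cannot eliminate them. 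More to the point, the version of Rademacher's theorem quoted in the paper needs all geodesics hyperbolic; the resonance identity does hold for bumpy metrics, but turning it into a contradiction in the presence of elliptic non-degenerate geodesics requires a finer analysis of the splitting numbers than you sketch. The paper sidesteps all of this by routing the non-hyperbolic situation through Birkhoff--Lewis rather than through Morse theory.
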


\noindent \emph{Note on the genericity assumption.} The genericity assumption allows to assume that all closed geodesics are non-degenerate. Klingenberg and Takens~\cite{Klingenberg-Takens},~\cite[Theorem~3.3.10]{Klingenberg-book} have proved that, for a $C^4$-generic metric on a closed manifold $M$, either there exists a closed geodesic \emph{of twist type} (see~\cite[p.~103]{Klingenberg-book} for the definition, in particular all the eigenvalues of the symplectic Poincar\'e return map have absolute value equal to $1$), or all closed geodesics are \emph{hyperbolic} (this means that the eigenvalues of the Poincar\'e return map are situated off the unit circle). In the former case, a smooth version of the Birkhoff-Lewis fixed point theorem due to Moser~\cite{Moser} ensures that there are infinitely many prime closed geodesics in a neighborhood of the geodesic of twist type.
These closed geodesics accumulate onto the latter, with minimal period going to infinity. 

In order to prove the generic existence of infinitely many prime closed geodesics on simply connected manifolds, one is therefore left to deal with the situation in which all closed geodesics are hyperbolic. 
This case was settled by Rademacher. 

\begin{theorem}[Rademacher~{\cite[Theorem~1]{Rademacher1989}}]
If $M$ is a simply connected closed Riemannian manifold whose rational cohomology ring is a truncated polynomial ring in one even variable and such that all its closed geodesics are hyperbolic, then there are infinitely many geometrically distinct ones. 
\end{theorem}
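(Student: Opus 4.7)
The plan is to argue by contradiction. Suppose $M$ carries only finitely many geometrically distinct prime closed geodesics $c_1,\ldots,c_r$, all hyperbolic. First, I would apply Bott's iteration formulas (Theorem~\ref{thm:Bott}) in the hyperbolic case, as in Example~\ref{example:hyperbolic}: the function $N$ attached to each $c_i$ vanishes identically (no Poincar\'e points), so $\Lambda$ is constant, and therefore $\lambda(c_i^m)=m\lambda(c_i)$ and $\nu(c_i^m)=1$ for every $m\ge 1$. Consequently every critical orbit $S^1\cdot c_i^m$ is a one-dimensional Morse-Bott nondegenerate critical manifold, and all of these are disjoint and cover the set of non-constant closed geodesics.

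Next, I would identify the Morse contribution of each orbit. Using the Thom isomorphism in the form appearing after Theorem~\ref{thm:Morse}, the local contribution of $S^1\cdot c_i^m$ to $H_\bullet(\Lambda M;\mathbb{Q})$ equals $H_{\bullet-m\lambda(c_i)}(S^1;o_{\nu^-})$. The monodromy of $\nu^-$ around $S^1\cdot c_i^m$ is controlled by the parities of $m$ and of $\lambda(c_i)$ (this is exactly the mechanism behind Remark~\ref{rmk:invisible}): odd iterates always yield an orientable negative bundle, and so contribute $\mathbb{Q}^2$ in degrees $m\lambda(c_i)$ and $m\lambda(c_i)+1$, while even iterates of odd-index hyperbolic geodesics contribute zero rationally. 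Summing over $m$, the contributions from each $c_i$ give a rational function of $t$ with a simple pole structure at roots of unity of order $2\lambda(c_i)$, and summing over $i$ gives a finite rational combination of such terms.

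On the topological side, the hypothesis $H^\bullet(M;\mathbb{Q})\cong\mathbb{Q}[x]/(x^{n+1})$ with $|x|$ even is precisely the case where Vigu\'e-Poirrier-Sullivan gives \emph{bounded} Betti numbers $b_k(\Lambda M;\mathbb{Q})$, so the Gromoll-Meyer input is unavailable. I would instead extract the average Betti number
\[
B(M):=\lim_{N\to\infty}\frac{1}{N}\sum_{k=0}^{N}b_k(\Lambda M;\mathbb{Q}),
\]
which exists by boundedness. Using a minimal Sullivan model of $\Lambda M$ for a formal rationally elliptic space of this cohomology type, $B(M)$ can be computed explicitly and is strictly positive. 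Comparing this against the contributions derived above (via strong Morse inequalities, applied term by term along the increasing filtration $\Lambda^{\le a}$) yields Rademacher's resonance identity
\[
B(M) \;=\; \sum_{i=1}^{r}\frac{\gamma_{c_i}}{\lambda(c_i)},\qquad \gamma_{c_i}\in\{\pm\tfrac{1}{2},\pm 1\},
\]
which encodes the visibility pattern of the iterates. The contradiction comes from verifying that, for every finite choice of hyperbolic $c_i$ with signs $\gamma_{c_i}$, the right-hand side cannot match the prescribed positive value of $B(M)$; alternatively, one may replace $B(M)$ by the coefficient of the leading polynomial term in the Poincar\'e series of the $S^1$-equivariant homology $H^{S^1}_\bullet(\Lambda M,M;\mathbb{Q})$, which has the same flavor but is technically cleaner in comparing growth rates.

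The main obstacle is the middle step: turning Morse-theoretic bookkeeping into the exact resonance identity in a form that survives degenerate cancellations between the "visible" classes in degrees $m\lambda(c_i)$ and $m\lambda(c_i)+1$ coming from different iterates $m$ and different prime geodesics $c_i$. This requires showing that the connecting homomorphisms in the long exact sequence~\eqref{eq:lespair} are essentially controlled by the parity pattern of Remark~\ref{rmk:invisible}, so that no additional cancellation reduces the left-hand side. Once this identification is in place, the rest of the proof is a comparison of rational numbers.
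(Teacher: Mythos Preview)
The paper does not actually prove this theorem; it merely states it, cites Rademacher, and describes the two ingredients that allow Rademacher to go beyond Gromoll--Meyer and Hingston: $S^1$-equivariant (co)homology of $(\Lambda M,M)$ with rational coefficients, and a resonance identity relating the mean indices of the prime closed geodesics to an averaged topological invariant of $\Lambda M$. Your proposal lines up with this description at the level of tools---contradiction, Bott iteration in the hyperbolic regime, equivariant Morse contributions, and the resonance formula---so the overall architecture is reasonable.

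There is, however, a genuine gap at the point where you claim a contradiction. You write that ``for every finite choice of hyperbolic $c_i$ with signs $\gamma_{c_i}$, the right-hand side cannot match the prescribed positive value of $B(M)$''. This is not true, and it inverts the logic of the resonance identity. Rademacher's formula is an \emph{identity}: under the standing hypothesis of finitely many prime geodesics it asserts precisely that $\sum_i \gamma_{c_i}/\alpha_{c_i}$ \emph{equals} the averaged Betti number $B(M)$. It is a constraint on the mean indices, not an impossibility. Since $B(M)$ is rational and any rational number can be written as a finite signed sum of terms $1/\lambda$ and $1/(2\lambda)$ with $\lambda\in\mathbb{Z}_{>0}$, there is no arithmetic obstruction of the kind you suggest. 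Your ``alternative'' of comparing growth rates of the equivariant Poincar\'e series does not rescue this: for the rational homotopy types in question the equivariant Poincar\'e series of $(\Lambda M,M)$ is itself a rational function with a simple pole at $t=1$, just like the equivariant Morse series of finitely many hyperbolic towers, so the leading growth rates agree rather than conflict.

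What is missing is the mechanism by which Rademacher actually extracts a contradiction in the hyperbolic case. The point is that hyperbolic orbits are nondegenerate with index pattern $\lambda(c_i^m)=m\lambda(c_i)$, so in the $S^1$-equivariant Morse theory each visible iterate contributes a single copy of $\mathbb{Q}$ in the precise degree $m\lambda(c_i)$, and nothing else. One then confronts the \emph{full} sequence of equivariant Betti numbers $\bar b_k=\dim H_k^{S^1}(\Lambda M,M;\mathbb{Q})$---which are known explicitly for truncated polynomial cohomology---with the set of degrees hit by the finitely many arithmetic progressions $\{m\lambda(c_i)\}$, using the strong Morse inequalities degree by degree (not merely their Ces\`aro average). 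It is this finer comparison, together with the handling of the case $\lambda(c_i)=0$ (where all iterates pile up in degree $0$ and cannot account for $\bar b_k$ in high degrees), that produces the contradiction. You have also identified the wrong ``main obstacle'': deriving the resonance identity is routine once the equivariant local contributions are in hand; the substantive work is the endgame you dismissed as ``a comparison of rational numbers''.
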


A previous theorem of Hingston, going in the same direction, is the following. 

\begin{theorem}[Hingston~{\cite[Theorem~6.2]{Hingston1984}}]
Let $M$ be a simply connected closed Riemannian manifold whose rational homotopy type is that of a compact rank $1$ symmetric space ($S^n$, $\mathbb{C}P^n$, $\mathbb{H}P^n$, or $CaP^2$ -- the octonion plane). Assume all closed geodesics on $M$ are hyperbolic. The number $n(\ell)$ of prime closed geodesics of length $\le \ell$ grows at least as fast as the prime numbers: $\liminf_\ell n(\ell)\frac {\log(\ell)} \ell >0$. In particular, there are infinitely many prime closed geodesics. 
\end{theorem}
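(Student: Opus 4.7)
The plan is to argue by contradiction: suppose there are only finitely many prime closed geodesics $c_1,\dots,c_N$ on $M$, with lengths $\ell_i=L(c_i)$ and indices $\lambda_i=\lambda(c_i)$. By hypothesis each $c_i$ is hyperbolic, so Example~\ref{example:hyperbolic} applies uniformly: $\lambda(c_i^m)=m\lambda_i$ and $\nu(c_i^m)=1$ for every $m\ge 1$. Consequently every critical $O(2)$-orbit of the energy functional on $\Lambda M$ is Morse--Bott non-degenerate, and Theorem~\ref{thm:Morse}(iv) together with the Thom isomorphism recalled on p.~\pageref{eq:lespair} shows that the orbit $S^1\cdot c_i^m$ contributes a copy of $H_*(S^1;o_{\nu^-})$ shifted by $m\lambda_i$ to the associated local Morse--Bott complex. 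By Remark~\ref{rmk:invisible}, when $\lambda_i$ is odd the even iterates $c_i^{2m}$ are homologically invisible over any coefficient field in which $2$ is invertible; one therefore works throughout with an odd prime coefficient field $\mathbb{F}_q$ in order to discard these ``silent'' classes cleanly.

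The second ingredient is the cohomology of $\Lambda M$ for $M$ a rank-$1$ symmetric space. For the CROSS in the list, the rational cohomology of $\Lambda M$ is computed from the Sullivan minimal model of $M$; equivalently, one reads it from the explicit Morse--Bott decomposition of $E$ with respect to the symmetric metric, of the type recalled in Examples~\ref{exple:indexSn}--\ref{exple:indexCPn} and worked out in full in \S\ref{sec:completing}. The qualitative output is that the Betti numbers $b_k(\Lambda M;\mathbb{Q})$ stay bounded in $k$ but accumulate along a fixed arithmetic progression (of step $n-1$ for $S^n$, etc.), so that $\sum_{k\le K}b_k(\Lambda M;\mathbb{F}_q)\ge c_M\,K$ for some $c_M>0$ and a suitable choice of $q$. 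Equivalently, the Poincaré--Dirichlet series $P_M(s):=\sum_k b_k(\Lambda M;\mathbb{F}_q)\,e^{-ks}$ has a pole at a well-located real point.

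The counting step couples the two pieces. The Morse inequalities for the filtration by sublevel sets of $E$ force
\[
\sum_{i=1}^{N}\#\bigl\{m\ge 1:\ m\lambda_i\le K,\ c_i^m\text{ homologically visible}\bigr\}\;\ge\;\sum_{k\le K}b_k(\Lambda M;\mathbb{F}_q),
\]
the right side growing linearly in $K$. Translating the length variable via $L(c_i^m)=m\ell_i$ and $\lambda(c_i^m)=m\lambda_i$, and passing to the length zeta function
\[
\zeta_{\mathrm{geo}}(s):=\sum_{i=1}^{N}\sum_{m\ge 1}(m\ell_i)^{-s},
\]
one compares its analytic behaviour to that of the Morse-side series $P_M$. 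Finitely many prime lengths make $\zeta_{\mathrm{geo}}$ a finite sum of shifted Riemann zetas, whose abscissa of convergence and residue structure are both strictly too small to match the growth of $\sum_{k\le K}b_k$ after the length/index conversion. An Ikehara--Delange Tauberian theorem, exactly in the style of the proof of the prime number theorem, then yields the announced asymptotic $n(\ell)\ge c\,\ell/\log\ell$, contradicting the assumption that $N$ is finite.

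The main obstacle is the last step: extracting the sharp prime-counting rate $\ell/\log\ell$, as opposed to merely deriving ``infinitely many''. This requires (i)~a careful bookkeeping of the homologically invisible iterates of Remark~\ref{rmk:invisible} and of the $\pm\tfrac{1}{2}$ coefficients $\gamma_c$ appearing in Rademacher's resonance formula, so that no geodesic contribution is double-counted or lost; (ii)~the passage from the index variable to the geometric length variable, using that for hyperbolic orbits $m\lambda_i$ and $m\ell_i$ are both exactly linear in $m$; and (iii)~a Tauberian theorem with a sufficiently precise remainder to convert the cumulative Morse bound into a $\pi(\ell)$-type asymptotic. Assembling these three points is the heart of Hingston's argument and the reason the $\log\ell$ factor appears in the conclusion.
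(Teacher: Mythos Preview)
The paper does not prove this theorem but cites Hingston and explicitly names the decisive tool: $S^1$-equivariant homology of $\Lambda M$ with $\F_p$-coefficients for \emph{varying} primes $p$ (see the discussion immediately following the statement, and the computation of $H_*(B\Z_m;\F_p)$ later in the section). Your proposal works with ordinary homology over a single field, and this is a genuine gap rather than a stylistic choice. With non-equivariant Morse theory, each visible hyperbolic orbit $S^1\cdot c_i^m$ contributes at most two generators (from $H_*(S^1)$), in degrees $m\lambda_i$ and $m\lambda_i+1$; summing your displayed inequality over degrees $\le K$ yields only $2\sum_i \lfloor K/\lambda_i\rfloor \ge \sum_{k\le K} b_k\sim c_M K$, i.e.\ the single constraint $\sum_i 1/\lambda_i \ge c_M/2$. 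This is perfectly compatible with finitely many primes of small index --- it is essentially the content of Rademacher's resonance relation, which you invoke but which postdates Hingston and requires a separate arithmetic argument to force infinitude. The equivariant mechanism is what changes the picture: the orbit of $c_i^m$ contributes $H_*(B\Z_m;\F_p)$ shifted by $m\lambda_i$, nonzero in \emph{all} nonnegative degrees when $p\mid m$ and only in degree $0$ otherwise. Varying $p$ over primes produces infinitely many independent constraints, and the $\log\ell$ in Hingston's conclusion comes from this counting over primes $p$, not from an Ikehara--Delange theorem.

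The Tauberian step is also ill-posed as written. The ``length zeta function'' $\sum_{i,m}(m\ell_i)^{-s}$ and the Poincar\'e-type series $\sum_k b_k e^{-ks}$ live in different variables: Morse theory constrains indices $\lambda_i$, not lengths $\ell_i$, and there is no two-sided a priori relation between them beyond a one-directional curvature bound. Even on the index side, finitely many primes give a pole with residue $\sum_i 1/\lambda_i$, which as just noted need not be ``strictly too small''. Finally, the logical structure is inconsistent: you open by assuming $N<\infty$ and close by claiming to deduce $n(\ell)\ge c\,\ell/\log\ell$; the latter cannot follow from the former, so either you are proving infinitude by contradiction (in which case the growth rate is not established) or you are arguing directly (in which case the contradiction hypothesis plays no role).
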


Hingston is able to go beyond the Gromoll-Meyer theorem because she uses $S^1$-equivariant homology of $\Lambda M$. Rademacher is able to go beyond the results of Hingston because, in addition to using $S^1$-equivariant homology, he proves a resonance identity involving the mean indices of the closed geodesics. Remarkably enough, the same kind of resonance identity appeared simultaneously in relation with the search for closed characteristics on convex hypersurfaces in $\R^{2n}$~\cite{Viterbo} (see also the discussion in~\S\ref{sec:symplectic} below). 

Since the cyclic groups $\Z_m$, $m\ge 2$ are $\Q$-acyclic, we have isomorphisms  $H^*_G(\Lambda M,M;\Q)\simeq H^*(\Lambda M/G,M;\Q)$, where $H^*_G(\cdot)$ denotes $G$-equivariant cohomology for $G=\Z_m$. Rademacher~\cite{Rademacher1989} uses only rational coefficients and works therefore with the usual quotient $(\Lambda M/G,M)$. In contrast Hingston~\cite{Hingston1984} makes essential use of $\F_p$-coefficients (for varying $p$) and works with the homotopy quotient. One notable consequence of the assumption that all orbits are hyperbolic is the identity $\lambda(\gamma_m)=m\lambda(\gamma)$ (see Example~\ref{example:hyperbolic}).

Rather than giving details for the proofs of the above results, in the remainder of this section we will content ourselves to motivate the use of equivariant homology. Bott describes in~\cite[Lecture~4, p.~350]{Bott-oldnew} the following phenomenon arising on the $n$-sphere, endowed with the round metric. Recall that the closed geodesics are great circles covered $m$ times and as such they form critical manifolds $V_m$, $m\ge 1$ of dimension $2n-1$, diffeomorphic to the unit tangent bundle $T_1S^n$. Indeed, every closed geodesic is uniquely determined by its origin and the tangent vector at the origin. Now:   
\begin{itemize}
\item If one deforms the sphere into an ellipsoid $\{\sum_{i=1}^{n+1}a_ix_i^2=1\}\subset\R^{n+1}$ with $a_1<a_2\dots<a_{n+1}$, the first critical manifold $V_1$ decomposes into the $n(n+1)/2$ geodesics given by the intersection of the coordinate planes with the ellipsoid. 
\item The homology of $V_1\simeq T_1S^n$ is $4$-dimensional, supported in degrees $0$, $n-1$, $n$, and $2n-1$. Moreover, $V_1$ admits a perfect Morse function $f$ with exactly $4$ points (viewing $T_1S^n\subset \R^{n+1}\times \R^{n+1}\ni(x,v)$, we can take $f(x,v)=\|x\|^2+\|v\|^2$). Thus, according to classical Morse theory, under small perturbations $V_1$ should contribute no more than $4$ critical points! 
\end{itemize}
We quote from Bott~\cite[p.~350]{Bott-oldnew}: 

\begin{center}
\emph{"the correct diagnosis of this ailment is that our energy function has a built-in symmetry which has to be taken into account before the proper correspondence between geometry and topology is realized".}
\end{center}
 
\noindent {\it Remark.} A familiar situation is that of odd-dimensional spheres $S^{2n+1}\subset \C^{n+1}$, carrying the $S^1$-action $t\cdot(z_1,\dots,z_n)=(e^{2\pi it}z_1,\dots,e^{2\pi it} z_n)$. Although the homology of $S^{2n+1}$ is $2$-dimensional, a generic $S^1$-invariant function on $S^{2n+1}$ has at least $n+1$ critical orbits, with $n+1=\mathrm{rk} \, H_*(\mathbb{C}P^n)$. Here, of course, $\mathbb{C}P^n=S^{2n+1}/S^1$.

In the case of geodesics on $S^n$ that we are discussing, the symmetry group $O(2)$ acts freely on $V_1$ and the quotient is $G(2,n+1)$, the Grassmannian of $2$-planes in $\R^{n+1}$, whose total homology rank is precisely $n(n+1)/2$. 

On the other hand, $O(2)$ acts on $V_m$, $m\ge 2$ with isotropy group $\Z_m$. The na\"\i ve quotient is still $G(2,n+1)$, but there is a much better way to perform the quotient operation in order to take into account the structure of the isotropy groups, namely by considering the homotopy quotient. Given a group $G$, there is up to homotopy a unique contractible space on which $G$ acts freely, denoted $EG$. The quotient $BG=EG/G$ is called the \emph{classifying space of $G$}, whereas the $G$-principal bundle $EG\to BG$ is a universal $G$-principal bundle. The \emph{homotopy quotient} of a $G$-space $X$ is by definition 
$$
X_G:=X \times_{G} EG,
$$ 
and the \emph{$G$-equivariant homology groups of $X$} are defined by
$$
H^G_*(X):=H_*(X_G).
$$
In the case $G=SO(2)$ one can take as a model for $EG$ the inductive limit $\lim_{n\to\infty} S^{2n-1}$, whereas in the case $G=O(2)$ one can take as a model for $EG$ the inductive limit $\lim_{n\to\infty} T_1S^n$. 

The classifying space $BG=EG/G$ captures subtle information on the group $G$. As an example, let us consider the following fact: $O(2)$ is \emph{not} a direct product of $SO(2)=S^1$ by $\Z/2$. However, it is very close to it: $O(2)$ is the semi-direct product $SO(2)\rtimes \{\pm 1\}$ with respect to the nontrivial action of $\{\pm 1\}$ on $SO(2)= U(1)=\{z\in\mathbb{C}\, : \, |z|=1\}$ given by $(-1)\cdot z := \bar z$. Alternatively, we can view $O(2)$ as being the extension  
$$
1\to SO(2)\to O(2)\to \{\pm 1\}\to 1.
$$
From this point of view, the action of $\{\pm 1\}$ on $SO(2)$ is induced by the non-canonical splitting $-1\mapsto \mathrm{Diag}(1,-1)$.  

That $O(2)\not\simeq SO(2)\times \mathbb{Z}/2$ is reflected by the fact that $BO(2)\not\simeq BSO(2)\times B\mathbb{Z}/2$. Indeed, it is proved in~\cite[Theorem~3.16]{HatcherVBKT} that the torsion free part of the cohomology of $BO(2)=G(2)$, the Grassmannian of $2$-planes in $\mathbb{R}^\infty$, is a polynomial ring in one variable $p_1$ of degree $4$. On the other hand, it follows from the K\"unneth formula that the torsion free part of $BSO(2)\times B\mathbb{Z}/2=\mathbb{C}P^\infty\times\mathbb{R}P^\infty$ is a polynomial ring in one variable $c_1$ of degree $2$.

The following three test cases explain the main features of $G$-equivariant homology: 
\renewcommand{\theenumi}{\roman{enumi}}
\begin{enumerate}
\item if $G$ acts freely, then $H_*^G(X)=H_*(X/G)$; 
\item $H_*^G(pt)=H_*(BG)$; more generally, if $G$ acts trivially on $X$ then $H_*^G(X)=H_*(X\times BG)$; 
\item if $H$ is a closed subgroup of $G$, then $H_*^G(G/H)=H_*(BH)$. 
\end{enumerate} 

The upshot is that, from a Morse theoretic point of view, the $m$-fold iterate $\gamma_m$ of a closed geodesic contributes to $H^{O(2)}_*(\Lambda M)$ by $H_*(B\Z_m)$. Now~\cite[\S3]{Brown} 
$$
H_*(B\Z_m;\Z)=\left\{\begin{array}{ll} \Z, & *=0, \\Ê\Z_m, & * \mbox{ odd},\\ 0,& *>0 \mbox{ even}.
\end{array}\right.Ê
$$
Using the universal coefficient theorem we therefore obtain 
$$
H_*(B\Z_m;\Z_\ell)=\left\{\begin{array}{ll} \Z_\ell, & *=0, \\Ê\Z_d, & *>0,
\end{array}\right.
$$
with $d$ the greatest common divisor of $m$ and $\ell$, and also
$$
H_*(B\Z_m;\Q)=\left\{\begin{array}{ll}\Q, & *=0, \\Ê0, & *>0.
\end{array}\right.
$$

Going back to our $O(2)$-manifolds $V_m$, $m\ge 2$ with isotropy group $\Z_m$, we see that $H_*^{O(2)}(V_m)=H_*(G(2,n+1)\times B\Z_m)$. Over the rationals the homology is that of $G(2,n+1)$, but over $\Z_\ell$ we see $\Z_d$-torsion appearing. According to Bott~\cite[p.~354]{Bott-oldnew}, 

\begin{center}
\emph{``in one way or another it was this torsion which was improperly accounted for in Morse's attempts on the question [of closed geodesics] on $S^n$ long ago".}
\end{center}

\begin{remark}[$SO(2)$ vs. $O(2)$] \label{rmk:SO2-O2} It is puzzling that the results of Rademacher and Hingston only make use of $SO(2)$-equivariant homology, and not of $O(2)$-equivariant homology. The $O(2)$-symmetry must play a distinctive role in the problem of existence of closed geodesics for Riemannian metrics, if ever the existence of infinitely many closed geodesics is to be true in full generality. Indeed, we discuss in~\S\ref{sec:symplectic} below an example due to Katok of a non-symmetric Finsler metric on the $n$-sphere, $n\ge 2$ which has only a finite number of geometrically distinct prime closed geodesics. The natural symmetry group of a non-symmetric Finsler metric is precisely $SO(2)$, and \emph{not} $O(2)$. Of course, this does not contradict Theorem~\ref{thm:generic} which applies to generic Riemannian metrics. The analogous existence result of infinitely many closed geodesics does also hold for $C^2$-generic Finsler metrics, although by an entirely different, Hamiltonian, proof~\cite[p.~141]{Ziller-Katok}. 
\end{remark}

\section{Computation of the homology of some classical free loop spaces using Morse theory} \label{sec:completing}

In the previous sections we used Morse theory in order to infer statements on closed geodesics from homological properties of the free loop space (the topology constrains the geometry). In this section we adopt the opposite perspective: in some situations the geometry of the underlying manifold is so explicit that we have a complete and detailed knowledge of the geodesic flow, and this allows us to compute the homology of free loop spaces (the geometry determines the topology). This method, which goes back to Bott's proof of the periodicity theorem~\cite{Bott-Stable_homotopy} and to the work of Bott and Samelson~\cite{Bott-Samelson}, was successfully applied by Ziller~\cite{Ziller}.

Let us call a Morse-Bott function on a Hilbert manifold $X$ \emph{perfect} if all the boundary maps in all the long exact sequences~\eqref{eq:lespair} of all the pairs $(X^{\le c},X^{<c})$ vanish. Ziller~\cite{Ziller} proved that the energy functional on the free loop spaces of compact rank one symmetric spaces $S^n$, $\C P^n$, $\H P^n$, $Ca P^2$ is perfect for any choice of coefficients, whereas in the case of $\R P^n$ the energy functional is perfect with $\Z/2$-coefficients. We describe in this section the relevant geometric objects in order to understand this computation in the case of $\Lambda S^n$ and $\Lambda \C P^n$. 

The key fact in Ziller's proof is that the relative cycles provided by Morse theory which describe the change in topology of the sub-level sets upon crossing a critical value can be ``completed'' inside the sub-level set. The resulting notion of a ``completing manifold'' is reminiscent of~\cite[p.~531]{Atiyah-Bott} and~\cite[p.~97]{Hingston1984} (see also~\cite[IX.7]{Morse}, \cite[p.~979]{Bott-Samelson}). A detailed analysis of this notion is carried out in~\cite{Hingston-Oancea}.

\begin{definition}[\cite{Hingston-Oancea}]
\label{def:completing} Let $X$ be a Hilbert manifold and $f:X\to\mathbb{R}$ a
$C^{2}$-function satisfying condition~(C) of Palais and Smale as stated in Theorem~\ref{thm:PS}. 
Let
\[
K:=\mathrm{Crit}(f)\cap f^{-1}(0)\subset X
\]
be the critical locus of $f$ at level $0$ and assume $K$ is a Morse-Bott
non-degenerate closed manifold of index $\iota(K)$. 

A \emph{completing manifold for $K$} is a finite dimensional closed manifold $Y$
together with a closed submanifold $L\subset Y$ of codimension $\iota(K)$ and a map
$\varphi:Y\rightarrow X^{\leq0}$ subject to conditions~\eqref{item:embedding}
and~\eqref{item:surjective}  below. We say that $Y$ is a \emph{strong
completing manifold} if it satisfies conditions~\eqref{item:embedding}
and~\eqref{item:retraction} (in which case condition~\eqref{item:surjective}
follows).

\begin{enumerate}
\item \label{item:embedding} the map $\varphi$ is an embedding \emph{near}
$L$, it maps $L$ diffeomorphically onto $K$, and
\[
\varphi^{-1}(K)=L.
\]

\item \label{item:surjective} the canonical map
\[
H_{\cdot}(Y)\to H_{\cdot}(Y,Y\setminus L)
\]
is surjective for any choice of coefficient ring.

\item \label{item:retraction} the embedding $s:L\hookrightarrow Y$ admits a retraction $p:Y\to L$ such that $ps=\operatorname{Id}_L$ and the manifold $Y$ is orientable. \end{enumerate}
\end{definition}

The definition can be refined by allowing~\eqref{item:surjective} to hold only for certain coefficients. If we use $\Z/2$-coefficients, the orientability assumption in~\eqref{item:retraction} can be dropped. However, we will not need such refinements in the sequel. 

It is straightforward to prove that (iii) implies (ii) (see~\cite{Hingston-Oancea}). The proof uses the shriek map $p_!:H_{\cdot- \mathrm{codim}(L)}(L;o_\nu)\to H_\cdot(Y)$, where $o_\nu$ is the orientation local system of the normal bundle to $L$ in $Y$.

\begin{lemma}[\cite{Hingston-Oancea}]
\label{lem:completing} Let $X$, $f$, and $K$ be as in
Definition~\ref{def:completing} and denote by $\nu^{-}$ the negative bundle of
$K$ (of rank $\iota(K)$). If $K$ admits a completing manifold then we have
short exact sequences
\[
0\rightarrow H_{\cdot}(X^{<0})\rightarrow H_{\cdot}(X^{\leq0})\rightarrow
H_{\cdot-\iota(K)}(K;o_{\nu^{-}})\rightarrow0.
\]
If $K$ admits a strong completing manifold these exact sequences are split, so
that
\[
H_{\cdot}(X^{\leq0})\simeq H_{\cdot}(X^{<0})\oplus H_{\cdot-\iota(K)}
(K;o_{\nu^{-}}).
\]
\end{lemma}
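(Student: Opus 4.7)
My plan is to deduce both assertions from the long exact sequence of the pair $(X^{\leq 0}, X^{<0})$ combined with the Thom isomorphism recalled just before equation~\eqref{eq:lespair}. Putting these together gives
$$
\cdots \longrightarrow H_{\cdot}(X^{<0}) \longrightarrow H_{\cdot}(X^{\leq 0}) \stackrel{q}{\longrightarrow} H_{\cdot - \iota(K)}(K; o_{\nu^-}) \stackrel{\partial}{\longrightarrow} H_{\cdot - 1}(X^{<0}) \longrightarrow \cdots,
$$
so the first assertion is equivalent to showing $\partial = 0$, i.e.\ that $q$ is surjective.

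To produce that surjectivity I would compare $q$ to the analogous map inside $Y$. The completing map $\varphi$ fits into a commutative square
$$
\begin{array}{ccc}
H_{\cdot}(Y) & \stackrel{j_*}{\longrightarrow} & H_{\cdot}(Y, Y\setminus L) \\
\varphi_* \downarrow & & \downarrow \varphi_* \\
H_{\cdot}(X^{\leq 0}) & \stackrel{q}{\longrightarrow} & H_{\cdot}(X^{\leq 0}, X^{<0}).
\end{array}
$$
For the right vertical arrow to be well defined I first adjust $\varphi$ by a short negative gradient flow; this pushes $Y \setminus L$ into $X^{<0}$, using that $\varphi^{-1}(K) = L$ and that points at level $0$ off $K$ are regular for $f$. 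Condition~\eqref{item:embedding} guarantees that $\varphi|_L$ is a diffeomorphism $L \to K$ pulling the negative bundle $\nu^-$ back to the normal bundle $\nu_L$ of $L$ in $Y$, orientation local systems included, so the Thom isomorphism turns the right vertical arrow into the identity of $H_{\cdot - \iota(K)}(K; o_{\nu^-})$. Condition~\eqref{item:surjective} says $j_*$ is surjective, and commutativity then forces $q$ to be surjective. This proves the first assertion.

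For the splitting in the strong case I would use the retraction $p: Y \to L$. Since $Y$ is orientable and $L \subset Y$ has codimension $\iota(K)$, Poincar\'e--Lefschetz duality provides an Umkehr map
$$
p^!: H_{\cdot - \iota(K)}(L; o_{\nu_L}) \longrightarrow H_{\cdot}(Y),
$$
and functoriality of Umkehr maps yields $s^! \circ p^! = (ps)^! = \mathrm{Id}$; translated through Thom, this says $j_* \circ p^! = \mathrm{Id}$ on the top row of the previous square. Consequently $\sigma := \varphi_* \circ p^!$ satisfies $q \circ \sigma = \mathrm{Id}$, exhibiting the desired splitting.

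The principal technical obstacle is the careful matching of orientation local systems: I must check that $\varphi|_L$ identifies $o_{\nu_L}$ with $o_{\nu^-}$, and that the Umkehr $p^!$ is well defined with the correct twist when $L$ is nonorientable. The key bookkeeping fact is that $o_L \otimes o_{\nu_L}$ restricts to $o_Y|_L$, which is trivial by the orientability assumption on $Y$. Once these identifications are in place, everything else reduces to a standard diagram chase.
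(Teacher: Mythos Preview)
Your proof is correct and follows essentially the same route as the paper: perturb $\varphi$ along $-\nabla f$ to obtain a map of pairs $(Y,Y\setminus L)\to(X^{\le 0},X^{<0})$, identify the right vertical arrow with an isomorphism via excision and the Thom isomorphism (using that $\varphi$ pulls $\nu^-$ back to the normal bundle of $L$), and then read off surjectivity of $q$ from surjectivity of $j_*$. For the splitting, the paper simply says that $j_*$ admits a section which induces one for $i_*$; your explicit construction of that section as the shriek map $p^!$ associated to the retraction $p$ (so that $j_*\circ p^! = s^!\circ p^! = (ps)^! = \mathrm{Id}$) is exactly the argument the paper alludes to in the paragraph preceding the lemma.
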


\begin{proof}
By an arbitrarily small perturbation of
$\varphi$ along the negative gradient flow of $-\nabla f$ we will have pushed all noncritical points at
level $0$ below level $0$, so that we obtain a map
$(Y,Y\setminus L)\rightarrow(X^{\leq0},X^{<0})$
that satisfies the same conditions as $\varphi$, and which we shall still denote by $\varphi$. By functoriality of the long exact sequence of a pair we get a commutative diagram of long exact sequences 
$$
\xymatrix{
\dotsÊ\ar[r] & H_\cdot(X^{<0}) \ar[r] & H_\cdot(X^{\le 0}) \ar[r]^-{i_*} & H_\cdot(X^{\le 0},X^{<0}) \ar[r]^-{[-1]} & \dots \\
\dots \ar[r] & H_\cdot(Y\setminus L) \ar[r] \ar[u]Ê& H_\cdot(Y) \ar[r]_-{j_*}Ê\ar[u]Ê& H_\cdot(Y,Y\setminus L) \ar[r]_-{[-1]} \ar[u]_\simeq & \dots
}
$$
Our assumptions imply that the normal bundle to $L$ in $Y$ is isomorphic to the pull-back via $\varphi$ of the negative bundle over $K$. Using excision, the Thom isomorphism, and Theorem~\ref{thm:Morse}.\eqref{item:crossing}, we obtain that the rightmost vertical arrow is an isomorphism. Surjectivity of the map $j_*$ then clearly implies surjectivity of the map $i_*$. In the case of a strong completing manifold, the map $j_*$ has a section and this induces a section of $i_*$ as well. 
\end{proof}

\subsection{Computation of the homology of $\Lambda S^n$}

Let us consider on $S^n$ the Riemannian metric with constant curvature $1$. The critical points of the energy functional $E:\Lambda S^n\to \R_+$ are either the constant loops, on the minimum level $0$, or great circles traversed $k\ge 1$ times, of length $2k\pi$ and energy $(2k\pi)^2$. Since a geodesic is uniquely determined by its starting point and by its initial speed vector, we infer that the critical set $\mathrm{Crit}(E)$ is a disjoint union of submanifolds $K_k$, $k\ge 0$ with 
$$
K_0\simeq S^n \qquad \mbox{and} \qquad K_k\simeq STS^n, \ k\ge 1.
$$
Here $STS^n$ denotes the unit tangent bundle of $S^n$, $K_0$ is the critical manifold of constant loops, and $K_k$, $k\ge 1$ is the critical manifold of great circles traversed $k$ times. The absolute minimum critical level set has index 
$$
\iota(K_0)=0
$$
and nullity 
$$
\nu(K_0)=n.
$$
Ziller~\cite{Ziller} has computed the index and nullity of the critical manifolds $K_k$, $k\ge 1$ and the answer is (see also Example~\ref{exple:indexSn})
$$
\iota(K_k)= (2k-1)(n-1),\qquad \nu(K_k)=2n-1.
$$
In particular, since $\nu(K_k)=\dim\, K_k$ for all $k\ge 0$ we infer that the $K_k$'s are Morse-Bott non-degenerate critical submanifolds. 

Let us now describe a completing manifold for $K_1$. Denote 
$$
S^{n-1}\hookrightarrow Y_1 \to STS^n
$$ 
the following $S^{n-1}$-bundle: given a point $x\in S^n$ and a unit vector $v\in T_xS^n$, the fiber $S^{n-1}_{x,v}$ over $(x,v)$ is the $(n-1)$-dimensional equator of $S^n$ that is orthogonal to the great circle passing through $x$ and tangent to $v$. Note that the vector $v$ singles out one of the two half-spheres with boundary $S^{n-1}_{x,v}$, namely the one half-sphere such that $v$ points towards its interior. 

Denote $L_1:=STS^n$ and $s:L_1\hookrightarrow Y_1$ the canonical section defined by associating to $(x,v)$ the antipodal point $x^*$ on the equator $S^{n-1}_{x,v}$. Define 
$$
\varphi:Y_1\to \Lambda S^n
$$
by associating to a triple $(x,v,y)$ with $x\in S^n$, $v\in ST_xS^n$, $y\in S^{n-1}_{x,v}$ the unique circle on $S^n$ which passes through $x$ and $y$, which is tangent to $v$, and which is orthogonal to $S^{n-1}_{x,v}$. This vertical circle has to be understood as being constant if $y=x$. If non-constant, it is oriented by $v$ and as such it admits a unique parametrization proportional to arc-length on the interval $[0,1]$. 

\begin{figure}
\begin{center}
\input{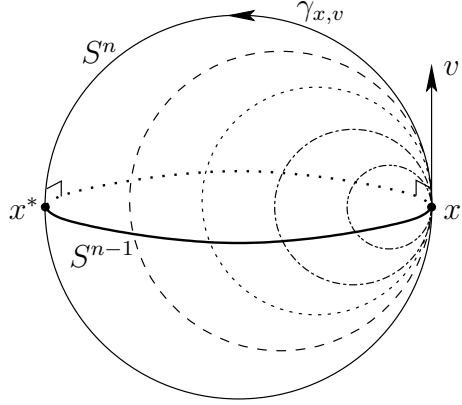}
\end{center}
\caption{Completing manifold for $K_1$ in $\Lambda S^n$.}
\label{fig:Y1Sn}
\end{figure}

One can check directly that condition (i) in the definition of a completing manifold is satisfied. In particular, one notices that the dimension of $Y_1$ is equal to $\iota(K_1)+\nu(K_1)$. That condition (iii) is also satisfied follows again from the construction, since $Y_1$ is naturally a bundle over $L_1$, and the embedding $s:L_1\hookrightarrow Y_1$ is a section of this fibre map. Moreover, $Y_1$ is orientable.

The completing manifold $Y_1$ is the building block for constructing completing manifolds $Y_k$ for $K_k$, $k\ge 2$. Indeed, we set 
$$
Y_k:= Y_1 \ {_{\mathrm{ev}}}\!\!\times_{\mathrm{ev}} Y_1 \ {_{\mathrm{ev}}}\!\!\times \dots \times_{\mathrm{ev}} Y_1 \qquad (k\ \mbox{factors})
$$
where $\mathrm{ev}:Y_1\to S^n$ is the evaluation map at the origin composed with the representation map $\varphi:Y_1\to \Lambda S^n$. We define 
$$
L_k:=\{\big((x,v,x^*), (x,v,x^*),\dots, (x,v,x^*)\big)\, : \, (x,v)\in STS^n\}.
$$
Finally, we define a smooth map $\varphi:Y_k\to \Lambda S^n$ by concatenating the $k$ circles with the same origin that are determined by any point in $Y_k$ via the map $\varphi:Y_1\to \Lambda S^n$. One can thus think of $Y_k$ as being a space of ``strings of vertical $k$-circles with the same origin". 

Again, a direct check shows that $Y_k$ is a completing manifold for $K_k$. Indeed, the map $\varphi$ maps $L_k$ diffeomorphically onto $K_k$ and the intersection of $\varphi(Y_k)$ with the critical level of $K_k$ is exactly $K_k$. The dimension of $Y_k$ is 
$$
\dim\, Y_k = (2k+1)(n-1)+1,
$$ 
and this is equal to $\iota(K_k)+\nu(K_k)$ (note to this effect that the evaluation map $\mathrm{ev}:Y_1\to S^n$ is a submersion). Thus condition (i) is satisfied. In order to check condition (iii), we define the map $p:Y_k\to STS^n$ as being the projection $Y_1\to STS^n$ composed with the projection $Y_k\to Y_1$ onto the first component. This defines a fiber bundle structure on $Y_k$ for which the embedding $s:L_k\hookrightarrow Y_k$ is a section. Finally, since $Y_k$ is a fiber-product of orientable manifolds over orientable manifolds, it is itself orientable. Thus condition (iii) is satisfied. 

The outcome is that the following isomorphism holds with arbitrary coefficients 
$$
H_\cdot(\Lambda S^n)=H_\cdot(S^n)\oplus \bigoplus _{k\ge 1} H_\cdot(STS^n)[-(2k-1)(n-1)].
$$
An even better way to write the outcome of the computation is in a table, where the horizontal coordinate represents the critical values of the energy functional, and the vertical coordinate represents homological degrees.

\begin{center}
\begin{tabular}
[c]{c|cccccc}
$\vdots$ &  &  &  &  & & \\
$5(n-1)+1$ &  &  &  &  & & \\
$5(n-1)$ &  &  &  &  & \dots & \\
$\vdots$ &  &  &  &  & & \\
$\vdots$ &  &  &  &  & & \\
$3(n-1)+1$ &  &  &  &  & & \\
$3(n-1)$ &  &  &  &  & & \\
$\vdots$ &  &  &  &  & & \\
$\vdots$ &  &  &  &  & & \\
$n$ &  &  &  &  & & \\
$n-1$ &  &  &  &  & & \\
$\vdots$ &  &  &  &  & & \\
$0$ &
\raisebox{0pt}[0pt][0pt]{\raisebox{40pt}{\dbox{$\xymatrix@R=8pt{ \\ H_\cdot(S^n) \\ \quad }$}}} &
\raisebox{0pt}[0pt][0pt]{\raisebox{103pt}{\dbox{$\xymatrix@R=24pt{ \\ H_\cdot(STS^n) \\ \quad }$}}} &
\raisebox{0pt}[0pt][0pt]{\raisebox{165pt}{\dbox{$\xymatrix@R=24pt{ \\ H_\cdot(STS^n) \\ \quad }$}}} &
&
\\\hline
& $0$ & $(2\pi)^2$ & $(4\pi)^2$ & & $(6\pi)^2$ & \dots
\end{tabular}
\end{center}

\subsection{Computation of the homology of $\Lambda \C P^n$}

Let us endow $\C P^n$ with the Fubini-Study metric induced from the metric with constant sectional curvature equal to $1$ on $S^{2n+1}$. Following Besse~\cite[Proposition~3.32]{Besse} the geodesics on $\C P^n$ admit the following description: given $x\in \C P^n$ and $v\in ST_x\C P^n$, there is a unique complex line $\ell_{x,v}$ through $x$ and tangent to $v$. This complex line is a metric sphere of dimension two with constant curvature equal to $4$ and is a totally geodesic submanifold of $\C P^n$. As such, the geodesic $\gamma_{x,v}$ through $x$ in the direction $v$  
is a great circle on $\ell_{x,v}$. In particular, all the geodesics of $\C P^n$ are closed and the nontrivial simple closed geodesics all have length $\pi$, and energy $\pi^2$. 

The critical set of the energy functional decomposes therefore as a disjoint union of submanifolds $K_k$, $k\ge 0$ with 
$$
K_0\simeq \C P^n \qquad \mbox{and} \qquad K_k\simeq ST\C P^n,\ k\ge 1.
$$
The critical manifold $K_0$ consists of constant geodesics and so its index and nullity are 
$$
\iota(K_0)=0,\qquad \nu(K_0)=2n.
$$
The critical manifold $K_k$, $k\ge 1$ consists of closed geodesics of length $k\pi$. The index and nullity have been computed by Ziller~\cite{Ziller} and are given by (see also Example~\ref{exple:indexCPn})
$$
\iota(K_k)=2(k-1)n+1,\qquad \nu(K_k)=4n-1.
$$
In particular, all the critical manifolds of the energy functional are Morse-Bott non-degenerate. 

We now describe a completing manifold for $K_1$. We define $Y_1$ to be the $S^1$-bundle  
$$
S^1\hookrightarrow Y_1 \to ST\C P^n
$$
whose fiber at $(x,v)$ is the equator $e_{x,v}\subset \ell_{x,v}$ that is orthogonal to $v$. Note that this bundle admits a natural trivialization 
$$
Y_1\simeq ST\C P^n\times S^1_\pi,
$$
with $S^1_\pi:=\R/\pi\Z$. Indeed, the equator $e_{x,v}$ is a metric circle of length $\pi$, and it has a natural orientation defined as the boundary orientation of the half-sphere $\ell^+_{x,v}\subset \ell_{x,v}$ determined by the condition that $v$ points towards its interior. Thus $e_{x,v}$ admits a natural parametrization by $S^1_\pi$. Alternatively, $e_{x,v}$ is parametrized as the geodesic $\gamma_{x,-Iv}$, where $I$ is the complex structure on $\C P^n$. 

We interpret $Y_1$ as a space of ``vertical circles" by defining $\varphi:Y_1\to \Lambda \C P^n$ as follows. For each point $y\in e_{x,v}$ there is a unique oriented circle $C_{x,v,y}$ on $\ell_{x,v}$ which passes through $x$ and $y$ and which is tangent to $v$. This circle meets $e_{x,v}$ orthogonally at $x$ and $y$, and it is understood to be constant if $y=x$. The orientation of the circle is determined by requiring that the speed vector at $x$ points towards the interior of $\ell^+_{x,v}$. We then define
$$
\varphi:Y_1\to \Lambda \C P^n,\qquad \varphi(x,v,y):=C_{x,v,y}. 
$$
Let $x^*\in e_{x,v}$ be the antipodal point of $x$ (which corresponds to $\pi/2\in S^1_\pi$). Then $\varphi(x,v,x^*)=C_{x,v,x^*}=\gamma_{x,v}$. We denote 
$$
L_1:=\{(x,v,x^*)\, : \, (x,v)\in ST\C P^n\}\subset Y_1. 
$$
It is then straightforward to see that $(Y_1,L_1,\varphi)$ is a completing manifold for $K_1$. 
Condition~(i) is clearly satisfied since a vertical half-circle $C_{x,v,y}$ is a closed geodesic if and only if $y=x^*$, i.e. $(x,v,y)\in L_1$. Also, the dimension of $Y_1$ equals $\iota(K_1)+\nu(K_1)$. As for condition~(iii), it is satisfied because the inclusion $s:L_1\hookrightarrow Y_1$ is a section of the bundle map $Y_1\to ST\C P^n$. Moreover, the manifold $Y_1$ is orientable. 

\begin{figure}
\begin{center}
\input{CPn.pstex_t}
\end{center}
\caption{Completing manifold for $K_1$ in $\Lambda \C P^n$.}
\label{fig:Y1CPn}
\end{figure}

As in the case of $S^n$, the completing manifold $Y_1$ constitutes the building block for constructing completing manifolds $Y_k$ for all the critical manifolds $K_k$, $k\ge 2$. More precisely, we define 
$$
Y_k:= Y_1 \ {_{\mathrm{ev}}}\!\!\times_{\mathrm{ev}} Y_1 \ {_{\mathrm{ev}}}\!\!\times \dots \times_{\mathrm{ev}} Y_1 \qquad (k\ \mbox{factors})
$$
where $\mathrm{ev}:Y_1\to \C P^n$ is the evaluation map at the origin composed with the representation map $\varphi:Y_1\to \Lambda \C P^n$. We define 
$$
L_k:=\{\big((x,v,x^*), (x,v,x^*),\dots, (x,v,x^*)\big)\, : \, (x,v)\in STS^n\}.
$$
Finally, we define a smooth map $\varphi:Y_k\to \Lambda \C P^n$ by concatenating the $k$ circles with the same origin that are determined by any point in $Y_k$ via the map $\varphi:Y_1\to \Lambda \C P^n$. 

It is again straightforward to see that $(Y_k,L_k,\varphi)$ is a completing manifold for $K_k$. Condition~(i) follows from the fact that the image of a point in $Y_k$ is a geodesic in $K_k$ if and only if it belongs to $L_k$, and also from the fact that $Y_k$ has the correct dimension 
$$
\dim\, Y_k = 2(k+1)n= \iota(K_k)+\nu(K_k).
$$
Finally, that $Y_k$ is orientable follows from the fact that it is a fiber product of orientable manifolds over orientable manifolds.

We obtain therefore an isomorphism with arbitrary coefficients 
$$
H_\cdot(\Lambda \C P^n)\simeq H_\cdot(\C P^n)\oplus \bigoplus_{k\ge 1} H_\cdot(ST\C P^n)[-2(k-1)n-1].
$$
Note that for $n=1$ this computation agrees with the one of $H_\cdot(\Lambda S^2)$. 

Again, a transparent way of representing this computation is in a table in which the horizontal coordinate records the energy level and in which the vertical coordinate records the homological degree. 

\begin{center}
\begin{tabular}
[c]{c|cccccc}
$\vdots$ &  &  &  &  & & \\
$6n+1$ &  &  &  &  & & \\
$6n$ &  &  &  &  &  & \\
$\vdots$ &  &  &  &  & & \\
$4n+1$ &  &  &  &  & \dots & \\
$4n$ &  &  &  &  & & \\
$\vdots$ &  &  &  &  & & \\
$2n+1$ &  &  &  &  & & \\
$2n$ &  &  &  &  & & \\
$\vdots$ &  &  &  &  & & \\
$1$ &  &  &  &  & & \\
$0$ &
\raisebox{0pt}[0pt][0pt]{\raisebox{40pt}{\dbox{$\xymatrix@R=8pt{ \\ H_\cdot(\C P^n) \\ \quad }$}}} &
\raisebox{0pt}[0pt][0pt]{\raisebox{85pt}{\dbox{$\xymatrix@R=24pt{ \\ H_\cdot(ST\C P^n) \\ \quad }$}}} &
\raisebox{0pt}[0pt][0pt]{\raisebox{125pt}{\dbox{$\xymatrix@R=24pt{ \\ H_\cdot(ST\C P^n) \\ \quad \\ }$}}} &
&
\\\hline
& $0$ & $\pi^2$ & $(2\pi)^2$ & & $(3\pi)^2$ & \dots
\end{tabular}
\end{center}

\begin{remark}
The reader is invited to compare this presentation of the homology of free loop spaces of spheres and projective spaces with the more algebraic one in~\cite{CJY}. 
The algebraic presentation in~\cite{CJY} 
contains more information since it also describes the algebra structure with respect to the Chas-Sullivan product. 
However, this algebra structure can also be computed from the geometric presentation that we gave here. This is a good exercise for the keen reader, and has been implemented for the case of a path space in~\cite{Hingston-Oancea}.
\end{remark}

\section{Relationship to symplectic geometry} \label{sec:symplectic}

This section is designed as a brief and necessarily incomplete review of connections between the material discussed in this paper and some questions in Hamiltonian dynamics. 

\subsection{Finsler metrics and Katok's examples} \label{sec:Katok}

There are many similarities between the problem of the existence of closed geodesics and the problem of the existence of periodic orbits for Hamiltonian systems. As has been already hinted at when we explained the linearized Poincar\'e return map in~\S\ref{sec:GM}, the former problem can be viewed as a particular case of the latter: given a Riemannian manifold $M$, the metric induces a bundle isomorphism between $TM$ and $T^*M$ through which the geodesic flow (on $TM$) is carried onto the Hamiltonian flow of the Hamiltonian $H:T^*M\to\R$ given by $H(p,q)=\frac {|p|^2} 2$ (see for example~\cite{GHL}). This is an instance of Legendre transform, which exhibits Riemannian geodesic flows as particular cases of Hamiltonian flows.
 
More generally, given any Hamiltonian $H:T^*M\to\R_+$ which is strictly convex in the fiber direction, the Legendre transform 
$$
\cL_H:T^*M\to TM=T^{**}M, \qquad (x,p)\mapsto (x,\p_p H(x,p))
$$ 
is a global diffeomorphism. If the Hamiltonian $H$ is also homogeneous of degree $2$, the function 
$$
F=\sqrt{H\circ\cL_H^{-1}}
$$ 
is a norm on the fibers of $TM$ such that the unit spheres are smooth and strictly convex. Such a function $F:TM\to\R_+$ is called a \emph{Finsler metric}. 

A Finsler metric allows one to compute the length of curves by the formula $L(\gamma):=\int F(\dot\gamma(t))\, dt$. However, unlike for Riemannian metrics, the length of a curve is not invariant under reversal of the time-direction. This holds only for so-called \emph{symmetric Finsler metrics}, which satisfy the identity $F(v)=F(-v)$. Any Riemannian metric defines a symmetric Finsler metric, but the latter notion is much more general. A coarse measure of the failure from being symmetric is the \emph{reversibility of a Finsler metric}, defined by $\mathrm{rev}(F):=\sup \{F(-v)/F(v)\, : \, 0\neq v\in TM\}$. We always have $\mathrm{rev}(F)\ge 1$ and the Finsler metric is symmetric iff $\mathrm{rev}(F)=1$.

Much of the theory of geodesics for Riemannian metrics can be carried over to Finsler metrics: there is an energy functional $E(\gamma):=\int F^2(\dot\gamma(t))\, dt$, its critical points are locally minimizing and are called geodesics. One can define the index, the nullity, and develop a corresponding Morse theory, albeit not on the space of $H^1$-curves, where the energy functional of a general Finsler metric is not $C^2$, but rather on finite dimensional approximations of the free loop space. The one notable difference with respect to the rest of this paper is the following: 

\begin{center}
\emph{The energy functional for a non-symmetric Finsler metric is $SO(2)$-invariant, and it is $O(2)$-invariant only if the Finsler metric is symmetric.}
\end{center}

For a non-symmetric Finsler metric, we enlarge the notion of geometrically distinct geodesics by including the case of two geodesics which have the same image but different orientations. General references for Finsler metrics are the books~\cite{Shen,Hryniewicz-Salomao}. The author found the papers~\cite{Ziller-Katok,Rademacher1994,Rademacher2004,Rademacher2007} very useful as well.

With one exception, all the results that we discussed in this paper hold for Finsler metrics, since one uses at most the $SO(2)$-symmetry of the problem, and not the $O(2)$-symmetry (see also Remark~\ref{rmk:SO2-O2}). The exception is the existence result for infinitely many closed geodesics on $S^2$, which we have discussed only at bird's eye view level, and where the $O(2)$-symmetry is hidden inside the dynamical arguments of the proof. 

The following example by Katok~\cite{Katok,Ziller-Katok} shows that, in the absence of $O(2)$-symmetry, a Finsler metric may admit only finitely many closed geodesics. The example is that of a non-symmetric Finsler metric on $S^n$, $n\ge 2$ which admits a finite number of geometrically distinct prime closed geodesics, more precisely $n$ of them if $n$ is even, and $n+1$ if $n$ is odd. These geodesics and their iterates are nondegenerate and elliptic (meaning that the eigenvalues of the linearized Poincar\'e return map all lie on the unit circle). The construction applies actually to all globally symmetric spaces of rank $1$. Ziller's paper~\cite{Ziller-Katok} provides a beautiful discussion of Katok's construction.

\begin{example}[Katok's example~\cite{Katok,Ziller-Katok}] \label{example:Katok} 
Let $n=2$ and endow $S^2$ with the round metric of curvature $1$. Consider the $1$-parameter group of isometries $\varphi_t$, $t\in\R$ given by rotations around a fixed axis and denote $V$ its infinitesimal generator. The lift $\tilde\varphi_t$, $t\in \R$ to $TM$ of this $1$-parameter group commutes with the geodesic flow since the $\varphi_t$ are isometries. 

On the Hamiltonian side, denote $H_0:T^*S^2\to\R$, $H_0(p):=|p|$ with respect to the metric dual to the one on $TS^2$, and denote $H_1(p):=\langle p,V\rangle$. Define
$$
H_\eps:=H_0+\eps H_1, \qquad \eps\in[0,1[.
$$
A direct computation shows that the Hamiltonian $\frac 1 2 H_\eps^2$ defines a Finsler metric $F_\eps$ for all $\eps\in[0,1[$. The Finsler metric $F_0$ is the standard metric, and in particular it is symmetric. However, the metric $F_\eps$ is non-symmetric as soon as $\eps>0$. The Hamiltonian flows of $H_0$ and $H_1$ commute, and it is an easy exercise to prove that, in the geometric situation at hand, for $\eps>0$ irrational, the periodic orbits of $H_\eps$ are the periodic orbits of $H_0$ that are invariant under the flow of $H_1$. Equivalently, for $\eps>0$ irrational the closed geodesics of $F_\eps$ are the closed geodesics of $F_0$ which are invariant under the isometry group $\varphi_t$. Their geometric image is thus a fixed equator of $S^2$, and there are exactly two geometrically distinct prime closed geodesics (of lengths $2\pi/(1+\eps)$, respectively $2\pi/(1-\eps)$).
\end{example}

\subsection{Hamiltonian- and Reeb flows as generalizations of geodesic flows} 
 
Many notions concerning geodesics on the one hand and concerning Hamiltonian dynamics on the other hand have been developed in parallel.  As an example, Long~\cite{Long} proved iteration formulae for the Maslov or Conley-Zehnder index which generalize the ones that Bott proved for the index of closed geodesics. Salamon and Zehnder~\cite{SZ92} proved an analogue of Lemma~\ref{lem:1}, with applications to the existence of infinitely many closed periodic orbits of some non-degenerate Hamiltonian systems. Viterbo~\cite{Viterbo} proved resonance formulas for convex Hamiltonian systems in $\R^{2n}$ similar to the ones proved by Rademacher, and these were subsequently generalized by Ginzburg, Kerman, Long, Wang, Hu et al.~\cite{Ginzburg-Kerman,Long-et-al}. 

Techniques in Riemannian geometry and the study of closed geodesics served also as an inspiration in order to formulate or solve problems in Hamiltonian dynamics. The Gromoll-Meyer theorem has a Hamiltonian generalization proved recently by McLean~\cite{McLean2012} and Hryniewicz-Macarini~\cite{Hryniewicz-Macarini}. 
The Conley conjecture, stating the existence of infinitely many periodic orbits for   Hamiltonian flows on the standard symplectic tori $T^{2n}$, can be seen as another Hamiltonian analogue of the problem of the existence of infinitely many closed geodesics. The Conley conjecture was proved by Hingston~\cite{Hingston2009}. Ginzburg~\cite{Ginzburg} proved a more general version, for closed symplectic manifolds $(M,\omega)$ such that $\langle\omega,\pi_2(M)\rangle=0$. Their methods were inspired by previous work on the closed geodesic problem (see also Appendix~\ref{app:S2}). 

The question of the existence of closed orbits of Hamiltonian systems on convex/starshaped energy levels in $\mathbb{R}^{2n}$ has been studied by Ekeland~\cite{Ekeland,Ekeland-book}, Ekeland-Lassoued~\cite{Ekeland-Lassoued}, Ekeland-Lasry~\cite{Ekeland-Lasry}, Viterbo~\cite{Viterbo}, Hofer, Wysocki, and Zehnder~\cite{Hofer-Wysocki-Zehnder}, Long and Zhu~\cite{Long-Zhu}, Wang, Hu, and Long~\cite{Wang-Hu-Long}. With the notable exception of~\cite{Hofer-Wysocki-Zehnder}, the fundamental technique for obtaining these results is to transform the Hamiltonian problem into a variational one using the Legendre-Fenchel transform~\cite{Ekeland-book}. As a matter of fact, some kind of convexity is always needed in order to apply variational techniques to a given problem. In the case of closed geodesics, convexity manifests itself under the disguise of the fact that the Hamiltonian $H(p,q)=\frac {|p|^2} 2$ which generates the geodesic flow is fiberwise convex. 

Reeb flows constitute a vast generalization of geodesic flows. The general definition of a Reeb flow is the following: a (co-oriented) \emph{contact manifold} $(N^{2n-1},\xi)$ is an odd-dimensional manifold endowed with a distribution of hyperplanes $\xi$ that is maximally non-integrable, meaning that $\xi=\ker\, \alpha$ for some $1$-form $\alpha$ with $\alpha\wedge (d\alpha)^{\wedge (n-1)}\neq 0$. The $1$-form $\alpha$ is unique up to multiplication by a positive function, and the previous condition is independent of this choice. The $2$-form $d\alpha$ is non-degenerate on $\xi$ and has an oriented transverse $1$-dimensional kernel generated by a vector field $R_\alpha$ that is uniquely determined by the condition $\alpha(R_\alpha)=1$. This is called the \emph{Reeb vector field associated to $\alpha$}. Its orbits are also called \emph{characteristics}. 
The case in point for our discussion is $N=S_\rho T^*M$, the sphere bundle of radius $\rho>0$ associated to some Riemannian metric on a (closed) manifold $M$. The restriction $\alpha:=pdq|_N$ of the Liouville form is a contact form, and the Hamiltonian vector field of $\frac{|p|^2}2$ is a constant multiple of the Reeb vector field on $S_\rho T^*M$.  

One of the driving conjectures in symplectic topology is the Weinstein conjecture, stating the existence of at least one closed characteristic for any Reeb vector field on any contact manifold (compare with the expected existence of infinitely many closed periodic orbits for geodesic flows!). Although this conjecture motivated some of the most influential papers in the field (\cite{Viterbo-WeinsteinR2n,Hofer-Inventiones,Taubes}), there is still no general agreement concerning the correct multiplicity expectation. This issue should in particular be put into perspective in view of Katok's examples discussed in~\S\ref{sec:Katok}. A promising new conjecture has been recently formulated by Sandon~\cite{Sandon} using the notion of a \emph{translated point}. 

Closed characteristics are currently studied using variants of Floer's construction discussed in Remark~\ref{rmk:Floer} combined with Gromov's theory of pseudoholomorphic curves~\cite{Gromov}. 
Abouzaid's monograph~\cite{Abouzaid-monograph}
gives a thorough account of this method. In a quite different direction, a fascinating analogy between geodesics in Riemannian geometry and pseudo-holomorphic curves in symplectic geometry is described by McDuff~\cite{Dusa-Notices} and Witten~\cite{Witten-Gibbs}. 

\begin{remark}
As another striking historical fact concerning the relationship between closed geodesics and closed characteristics, let us mention that analogues of the homologically invisible closed geodesics discussed in Remark~\ref{rmk:invisible} have appeared independently in contact geometry. These are the so-called ``bad" Reeb orbits, which act as disorienting asymptotes for moduli spaces of pseudoholomorphic curves~\cite{EGH}. 
\end{remark}

\section{Guide to the literature} \label{sec:comments}

The books by Gallot, Hulin and Lafontaine~\cite{GHL} or Chavel~\cite{Chavel-Riemannian_Geometry} are classical texts on Riemannian geometry, which include a treatment of the energy functional. Morse theory is classically explained in Milnor's book~\cite{Milnor-Morse_theory}, with specific applications to the problem of the existence of closed geodesics. Morse-Bott nondegenerate manifolds are first introduced by Bott in~\cite{Bott1954}. General facts about the $H^1$-approach to free loop spaces are explained in Klingenberg's book~\cite{Klingenberg-book}. A beautiful reference concerning closed geodesics is the book by Besse~\cite{Besse}. 

Theorem~\ref{thm:GM} was originally proved by Gromoll and Meyer~\cite{Gromoll-Meyer-JDG1969} with rational coefficients. The original paper is certainly the best reference. The version with $\mathbb{F}_p$-coefficients is stated in~\cite[\S4.2]{Klingenberg-book}, and the proof is the same. Klingenberg claims in~\cite{Klingenberg-book} the existence of infinitely many prime closed geodesics on any closed simply connected Riemannian manifold, but the proof is generally understood to have gaps. Both the generic and the non-generic case depend on the ``divisibility lemma" \cite[\S4.3.4]{Klingenberg-book}, which is ``controversial" according to Hingston~\cite[p.~86]{Hingston1984} (see also~\cite{Bott-indomitable,Bott-oldnew,Ziller-Katok}). 

To the author's knowledge, the best treatment of the iteration of the index formulas is the original paper by Bott~\cite{Bott-iteration}. A good treatment of equivariant cohomology is the paper by Atiyah and Bott~\cite{Atiyah-Bott}, which also discusses the notion of a completing manifold. Ziller's paper~\cite{Ziller} is very well-written, albeit very concise. The same circle of ideas is addressed in a different context in~\cite{Hingston-Oancea}. Operations of Chas-Sullivan type are discussed in relationship with closed geodesics by Goresky and Hingston in~\cite{Goresky-Hingston}. Finsler metrics are nicely discussed by Ziller in~\cite{Ziller-Katok}.

We have barely touched upon Hamiltonian dynamics. Variational methods are described in Ekeland's book~\cite{Ekeland-book}. The book by Hofer and Zehnder~\cite{Hofer-Zehnder} discusses the topic from a modern perspective which avoids pseudoholomorphic curves. A panorama of pseudoholomorphic curves is provided in~\cite{Audin-Lafontaine}, and the full-fledged theory is discussed by McDuff and Salamon in~\cite{McDuff-Salamon}. Floer homology is discussed in detail in the book by Audin and Damian~\cite{Audin-Damian}, while Abouzaid's monograph~\cite{Abouzaid-monograph}
is perhaps the best available reference for a discussion of Floer homology and of the algebraic operations that it carries from the perspective of pseudoholomorphic curves.

And, again, the reader should read Bott's survey~\cite{Bott-oldnew}.

\appendix

\section{The problem of existence of infinitely many closed geodesics on the $2$-sphere\\by Umberto Hryniewicz}\label{app:S2}

\markboth{Umberto Hryniewicz}{Geodesics on the $2$-sphere}

This problem and its solution by Bangert and Franks~\cite{Bangert1993,Franks1992} has roots in the work of Poincar\'e~\cite{Poincare} and Birkhoff~\cite{Birkhoff}. Namely, Birkhoff considers what is now called the ``Birkhoff annulus'' associated to a simple closed geodesic on $S^2$: such a geodesic can be seen as an equator dividing the sphere into two hemispheres, and unit vectors pointing into one of the hemispheres form the associated Birkhoff annulus; it is an embedded open annulus inside the unit tangent bundle consisting of an $S^1$-family of bouquets of unit vectors. Since simple closed geodesics always exist, Birkhoff annuli always exist as well. Birkhoff~\cite{Birkhoff} shows that there is a well-defined first return map of the geodesic flow to this annulus when the curvature is everywhere positive.

\begin{figure}[htp]      
\centering       
\includegraphics[scale=0.18]{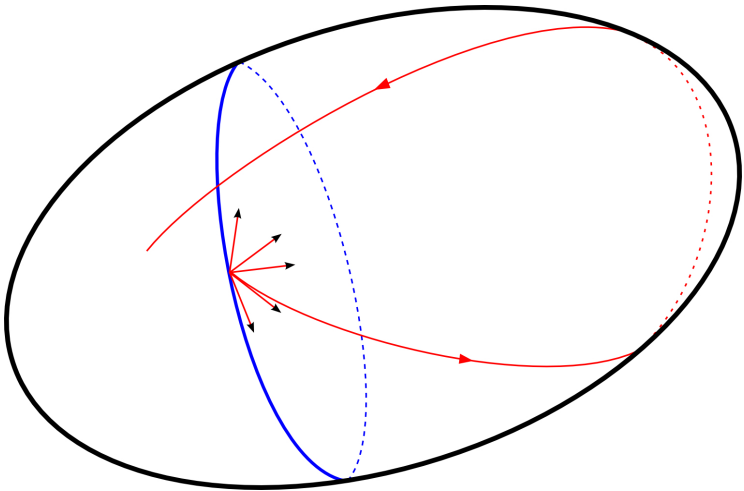}      
\caption{{A Birkhoff annulus in positive curvature: no geodesic ray is trapped to a hemisphere and the return map is well-defined.}}
\end{figure}

This kind of annular global surface of section already appeared in the work of Poincar\'e on the planar circular restricted three-body problem. After Levi-Civita regularization, certain components of the energy levels below the first critical value of the energy are diffeomorphic to $SO(3)$, and results from~\cite{AFKP} tell us that the dynamics on these levels are equivalent to tight Reeb flows. Moreover, for small mass ratio and generic values of the energy there is an annulus-like global surface of section, the return map to this annulus can be extended to the closed annulus and the twist condition for applying the Poincar\'e-Birkhoff theorem is verified.

Returning to geodesic flows on $S^2$, if the return map to a Birkhoff annulus associated to a simple closed geodesic admitting conjugate points (considered as a geodesic ray) exists, then it can be extended via the second conjugate point to a map on a closed annulus. Under the additional assumption that the boundary geodesic has a ``twist in its linearized dynamics'', more precisely that its Poincar\'e inverse rotation number is not equal to 1, the annulus map satisfies the assumptions of the Poincar\'e-Birkhoff theorem, which can be used to deduce $\infty$-many closed geodesics given by fixed points for longer and longer returns.

In general, the return map to a Birkhoff annulus need not be well-defined, but in this case Bangert~\cite{Bangert1993} proves that there are still $\infty$-many closed geodesics. He proceeds in two steps. First, he proves that simple closed geodesics without conjugate points, also referred to as ``waists'' (this analogy is only precise generically in the metric), imply the existence of infinitely many closed geodesics. Second, he shows that if a simple geodesic has conjugate points and the return map to the associated Birkhoff annulus is not well-defined, then one of the hemispheres carries such a waist.

\begin{figure}[htp]      
\centering       
\includegraphics[scale=0.15]{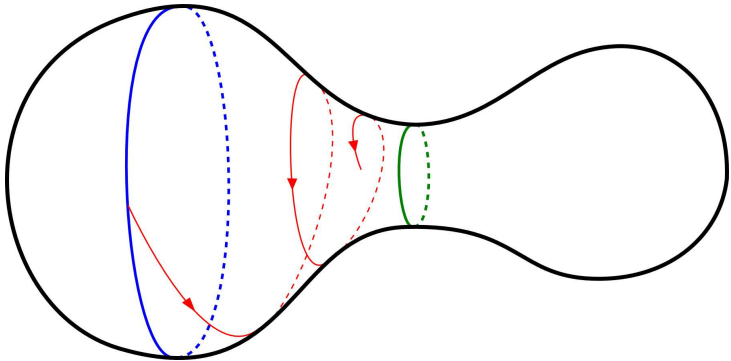}      
\caption{{A Birkhoff annulus in non-positive curvature: there may be waists and, generically, certain geodesic rays are captured by their stable manifolds.}} 
\end{figure}

However, even if the return map to a Birkhoff annulus associated to a simple closed geodesic with conjugate points is defined, the twist condition necessary to apply the Poincar\'e-Birkhoff theorem may not be satisfied (e.g. the round metric). Then, important results of Franks~\cite{Franks1992} provide the final step in the argument: an area-preserving map of the open annulus has $\infty$-many periodic points if it has one periodic point. Note that one can use the Lyusternik-Schnirelmann theorem to obtain a fixed point in this case. Combining these results of Bangert and Franks, the existence of $\infty$-many closed geodesics for Riemannian metrics on $S^2$ is established.

The result of Bangert and Franks can be independently recovered by combining arguments from Hingston~\cite{Hingston1993} and Angenent~\cite{Angenent}. This circle of ideas will be further explored in~\cite{Hingston-Hryniewicz}; here are some of the details.

Following Hingston, if the ``longest'' of the three Lyusternik-Schnirelmann geodes\-ics is nonrotating (Poincar\'e's inverse rotation number is equal to $1$) then it must be a very special degenerate critical point of the energy functional. More precisely, the space of embedded loops in~$S^2$ modulo short ones carries an obvious nontrivial $3$-dimensional homology class -- a certain family of short-long embedded loops -- and Grayson's curve shortening flow can be used to run a min-max argument over this class to obtain a special simple closed geodesic $\gamma_*$. The upshot here is that Grayson's curve shortening flow preserves embeddedness of loops. The index plus the nullity of this geodesic is greater than or equal to $3$, its transverse rotation number is not smaller than $1$, and it has local critical group nontrivial in degree~$3$. When the transverse rotation number of $\gamma_*$ is precisely equal to~$1$, then the sequence given by the index plus nullity of its iterates $\gamma_*^m$ exhibits the slowest possible growth as $m\to\infty$, forcing the very degenerate behavior mentioned above. This class of special critical points introduced by Hingston, and later called \emph{symplectic degenerate maxima/minima (SDM)} by Ginzburg, is crucial to the proofs of the Conley conjecture~\cite{Hingston2009,Ginzburg}. The presence of an SDM forces the existence of $\infty$-many closed geodesics; this is the consequence of the main results from~\cite{Hingston1993,Hingston1997}.

Angenent~\cite{Angenent} now deals with the situation where the transverse rotation number of $\gamma_*$ is greater than $1$; in this case he shows that there are $\infty$-many closed geodesics as well. The proof is based on the curve-shortening flow, and he constructs certain isolating blocks in the sense of Conley. Here, it is as if the twist condition for the return map to the Birkhoff annulus associated $\gamma_*$ is satisfied, except that there may be no return map! He can also say a lot about the kind of geodesics he finds (called $p/q$-satellites).

In particular, the existence of $\infty$-many closed geodesics for Riemannian metrics on $S^2$ is again established as a consequence of a combination of the results of Hingston and Angenent explained above.

Angenent's result can be vastly extended to more general contexts using contact homology, like in~\cite{Hryniewicz-et-al}, allowing for applications to (possibly non-reversible) Finsler geodesic flows on the $2$-sphere and other Hamiltonian systems. The proofs of the results from~\cite{Hryniewicz-et-al} are technically very different from those of~\cite{Angenent}, but the philosophy is the same: we use Floer theoretical arguments to compute the homology of the Conley index of a certain isolating block for the negative gradient flow of the action functional, similarly to what Angenent does for the energy functional.

The above mentioned set of arguments is not entirely contained in the realm of Morse theory, but Morse theory is almost everywhere.

\markboth{Bibliography}{Bibliography}

\bibliographystyle{abbrv}
\bibliography{Geodesics}

\end{document}